\newcommand\mypagesizel{
\textwidth= 6.5in
\textheight=9in
\voffset-.55in
\hoffset -0.75in
\marginparwidth=56pt
}
\newcommand{\p}[0]{{\mathbb P}}
\newcommand{\N}{\textup{N}}
\newcommand{\Z}{\textup{Z}}
\newcommand{\NE}{\overline{\textup{NE}}}
\renewcommand{\phi}{\varphi}
\newcommand{\B}{\textup{B}}
\newcommand{\sL}{\mathscr{L}}
\newcommand{\sO}{\mathscr{O}}
\newtheorem{thm}{Theorem}[section]
\newtheorem{lemma}[thm]{Lemma}
\newtheorem{prop}[thm]{Proposition}
\newtheorem*{thm*}{Theorem}
\theoremstyle{definition}
\newtheorem{const}[thm]{Construction}
\newtheorem{defn-thm}[thm]{Definition-Theorem} 
\newtheorem{defn-lemma}[thm]{Definition-Lemma}
\theoremstyle{remark}
\newtheorem*{not-and-def}{Notation and definitions}
\numberwithin{equation}{section}
\begin{document}

\title[]{Fano varieties with $\mathrm{Nef}(X)=\mathrm{Psef}(X)$ and $\rho(X)=\mathrm{dim}\, X-1$}

\author{Wenhao OU} 

\email{wenhaoou@math.ucla.edu}
\address{UCLA Mathematics Department,
520 Portola Plaza, Los Angeles, CA 90095, USA}

\subjclass[2010]{14J45, 14E30}

\begin{abstract}
We classify mildly singular Fano varieties $X$ such that $\mathrm{Nef}(X)=\mathrm{Psef}(X)$ and $\rho(X)=\mathrm{dim}\, X-1$.
\end{abstract}

\maketitle

\tableofcontents

\section{Introduction}

During the last few decades, a lot of progress has been made in   classification of complex projective varieties. In particular, from the viewpoint of the Minimal Model Program, there are three building blocks in birational classification theory. They are canonically polarized varieties, Calabi-Yau varieties and Fano varieties, which correspond respectively to varieties with ample canonical divisors, varieties with trivial canonical divisors, and varieties with anti-ample canonical divisors. 

In this paper, we are interested in Fano varieties. According to \cite[Thm. 0.2]{KMM92a}, smooth complex  Fano varieties of fixed dimension form a bounded family. In particular, there are finitely many smooth Fano varieties of fixed dimension up to deformation equivalence. The classification of  smooth Fano varieties of dimension $2$, or equivalently, of Del Pezzo surfaces, is well-known: a Del Pezzo surface is either a product of two $\p^1$, or is a blowup of $\p^2$ at at most $8$ points. In dimension three, the list is much longer, but we still have a complete classification up to deformation equivalence thanks to the work of Iskovskikh and of Mori-Mukai (see \cite{IPY99} for a summary).  Starting from dimension four, the classification of Fano varieties seems to be a very difficult problem. 

Nevertheless, if we impose some additional conditions, a classification  might be possible. One famous problem is a conjecture of Campana-Peternell, which states that every smooth complex Fano variety with nef tangent bundle should be a product of  rational homogeneous spaces. This problem is still largely open. It is known that if a smooth Fano  variety  $X$ has nef tangent bundle, then it must satisfy the condition  $\mathrm{Nef}(X) = \mathrm{Psef}(X)$ (see \cite[Prop. 3.1]{MOSWW15} and \cite[Lem. 4.4]{Dru16}), where   $\mathrm{Nef}(X)$ is  the cone of nef divisors in $X$, and   $\mathrm{Psef}(X)$ is the cone of pseudo-effective divisors.

If  $X$ is a Fano variety  which satisfies   $\mathrm{Nef}(X) = \mathrm{Psef}(X)$, and if moreover $X$ has $\mathbb{Q}$-factorial log canonical singularities, then the Picard number of $X$, is at most equal to the dimension of $X$  (see \cite[Lem. 4.9]{Dru16}). If we assume further that $\rho(X) = \mathrm{dim}\, X$  and that  $X$ has locally factorial canonical singularities, then $X$ is a product of double covers of $\p^1\times \cdots \times \p^1$. More precisely, Druel proves the following theorem (see {\cite[Thm. 1.1]{Dru16}} and {\cite[Prop. 10.4]{Dru16}}).

\begin{thm}
\label{thm-double-cover-p1}
Let $X$ be a Fano variety with locally factorial canonical singularities such that $\rho(X) = \mathrm{dim}\, X$ and  $\mathrm{Nef}(X) = \mathrm{Psef}(X)$. Then $X\cong X_1\times \cdots \times X_k$ such that  for all $i=1,...,k$, either $X_i\cong \p^1$,  or $\mathrm{dim}\, X_i\geqslant 3$ and $X_i$ is a double cover  of $\p^1\times \cdots \times \p^1$, branched along a prime divisor of degree $(2,...,2)$.
\end{thm}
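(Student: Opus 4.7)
The plan is to perform the Mori program on $X$, exploit the rigidity imposed by $\mathrm{Nef}(X)=\mathrm{Psef}(X)$ to control all extremal contractions, and recover the product decomposition by induction on $n=\dim X$. The first step is to show that every extremal ray of $\overline{\mathrm{NE}}(X)$ gives a Mori contraction of fiber type. Since $X$ is Fano with factorial canonical singularities, $\overline{\mathrm{NE}}(X)$ is polyhedral by the cone theorem. A divisorial contraction with exceptional prime divisor $E$ would make $E$ an effective, hence pseudo-effective, divisor that fails to be nef on curves in the contracted ray, contradicting $\mathrm{Nef}(X)=\mathrm{Psef}(X)$; small contractions are ruled out by factoriality. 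Thus every elementary extremal contraction $\pi\colon X\to Y$ is of fiber type.

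Next I would set up the induction on $n$. Pick such a contraction $\pi$; standard MMP gives that $Y$ inherits factorial canonical singularities and is Fano with $\rho(Y)=\rho(X)-1=n-1$. The equality $\mathrm{Nef}(Y)=\mathrm{Psef}(Y)$ is inherited from $X$: a pseudo-effective divisor $D$ on $Y$ pulls back to a pseudo-effective, hence nef, divisor on $X$, and the projection formula then gives $D\cdot C'\ge 0$ for every irreducible curve $C'\subset Y$. Combining the inequality $\rho(Y)\le \dim Y$ from \cite[Lem.~4.9]{Dru14} with $\dim Y+\dim F=n$, where $F$ is a general fiber of $\pi$, forces $\dim F=1$, so $\pi$ must be a $\p^1$-fibration. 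By induction, $Y$ is a product of $\p^1$'s and double covers of $(\p^1)^m$ (with $m\ge 3$) of the stated form.

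The remaining task is to lift the product decomposition of $Y$ to one of $X$ and to identify the non-$\p^1$ factors precisely. I would distinguish two cases. If $Y$ contains a $\p^1$-factor, then composing $\pi$ with the two projections from that factor produces two further extremal contractions of $X$; comparing the extremal rays of $\overline{\mathrm{NE}}(X)$ and using the rigidity supplied by $\mathrm{Nef}(X)=\mathrm{Psef}(X)$, one argues that $\pi$ globally splits the $\p^1$-factor off as $X\cong \p^1\times X'$, and iterates the argument to peel off all $\p^1$-factors of $Y$. If no $\p^1$-factor of $Y$ remains (so $Y$ is a single double cover of $(\p^1)^{n-1}$, or $n\le 2$ and one handles this directly), then composing $\pi$ with the $\p^1$-projections of $Y$ assembles a finite morphism $\Phi\colon X\to (\p^1)^n$ of degree $2$; adjunction together with the Fano condition on $X$ then pins down the branch divisor to be prime of degree $(2,\dots,2)$.

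The main obstacle I foresee lies in the splitting step: showing that a $\p^1$-fibration $X\to Y$ whose base contains a $\p^1$-factor actually splits $X$ as a product, rather than being a non-trivial twist. This amounts to vanishing of a Brauer-type obstruction, which should ultimately follow from the cone condition but likely requires a careful analysis of the relative $\mathrm{Pic}$ and of the extremal rays of $X$ beyond $\pi$. Identifying the branch divisor precisely as $(2,\dots,2)$ in the double cover case is a secondary intersection-theoretic computation, exploiting the equality of the nef and pseudo-effective cones to pin down the degree from the Fano condition alone.
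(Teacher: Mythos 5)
This statement is not proved in the paper at all: it is Druel's theorem, quoted verbatim from \cite[Thm.~1.1]{Dru14} and \cite[Prop.~10.4]{Dru14}, so there is no internal proof to compare your sketch against. (Note also that the hypothesis as printed, $\mathrm{Nef}(X)\subseteq\mathrm{Psef}(X)$, is a typo for equality, which you correctly read as such.) Your opening reductions are the standard ones and do match the strategy the paper uses for its own $\rho=\dim X-1$ analogues: every extremal contraction is of fiber type, the base inherits the Fano, factoriality and $\mathrm{Nef}=\mathrm{Psef}$ properties (this is \cite[Lem.~4.2, Cor.~4.8]{Dru14}, restated here as Lemma \ref{lem-base-fano}), and the inequality $\rho(Y)\le\dim Y$ forces the contraction to have relative dimension $1$. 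One local error: small contractions are \emph{not} ``ruled out by factoriality'' --- a $\mathbb{Q}$-factorial variety can perfectly well admit small extremal contractions (this is why flips exist). The correct argument is that $\mathrm{Nef}(X)=\mathrm{Psef}(X)$ dualizes, via BDPP, to $\NE(X)$ being the closed cone of movable curves, so every extremal ray is covered by a family of curves sweeping out $X$ and its contraction cannot be birational at all.

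The genuine gap is in the second half, and you have in effect flagged it yourself: the entire content of Druel's theorem is the splitting, and your sketch does not supply it. Concretely, three things are missing. First, you never show that the $n$ fibrations to $\p^1$ exist in ``independent directions,'' i.e.\ that $\NE(X)$ is simplicial with exactly $n$ extremal rays whose associated $\p^1$-fibrations assemble into a \emph{finite} morphism $\Phi\colon X\to(\p^1)^n$; composing one fibration $\pi\colon X\to Y$ with the projections of $Y$ only gives a map to $(\p^1)^{n-1}$ whose fibers are curves. Second, the degree of $\Phi$ is not $2$ in general but $2^k$ (one factor of $2$ for each non-$\p^1$ factor $X_i$), and bounding it requires the intersection-theoretic computation with $(-K_X)^n$ and the Cartier condition that the present paper carries out in its own Propositions \ref{prop12-finite-over-12} and \ref{prop12-finite-over-22} for the threefold case; ``adjunction together with the Fano condition'' is not an argument. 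Third, the decomposition of the resulting cyclic cover into a product --- your ``Brauer-type obstruction'' --- is exactly where the hypothesis that the branch divisor splits as a sum of pullbacks from disjoint groups of factors must be extracted, and the dimension restriction $\dim X_i\ge 3$ (excluding $2$-dimensional double-cover factors, \cite[Prop.~10.4]{Dru14}) also has to come out of this analysis. As written, the proposal is a correct reduction to the hard part followed by an acknowledgment that the hard part is hard; it would not constitute a proof.
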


In this paper, we will consider Fano  varieties $X$ with locally factorial canonical singularities such that $X$ is smooth in codimension $2$, $\rho(X) = \mathrm{dim}\, X-1$ and  $\mathrm{Nef}(X) = \mathrm{Psef}(X)$. If the dimension of $X$ is $2$, then $X$ is isomorphic to $\p^2$. In  dimension $3$, we prove the following   theorem.

\begin{thm}
\label{thm-classification-3f}
Let $X$ be a Fano threefold with isolated locally factorial canonical singularities such that $\rho(X) = 2$ and  $\mathrm{Nef}(X) = \mathrm{Psef}(X)$. Then one of the following holds.

\begin{enumerate}
\item[1.] $X\cong \p^1\times \p^2$.
\item[2.] $X$ is a double cover of $\p^1\times \p^2$, branched along a prime divisor of degree $(2,2)$.
\item[3.] $X$ is a double cover of $\p^1\times \p^2$, branched along a prime divisor of degree $(2,4)$.
\item[4.] $X$ is a   hypersurface in $\p^2\times \p^2$ of degree $(1,1)$.
\item[5.] $X$ is a hypersurface in $\p^2\times \p^2$ of degree $(1,2)$.
\item[6.] $X$ is a hypersurface in $\p^2\times \p^2$ of degree $(2,2)$.
\item[7.] $X$ is a double cover of a smooth hypersurface $Y$ in $\p^2\times \p^2$ of degree $(1,1)$, branched along a prime divisor $D$, which is the intersection of $Y$ and a hypersurface of    degree $(2,2)$ in $\p^2\times \p^2$.
\end{enumerate}
\end{thm}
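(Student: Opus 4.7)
The plan is to study $X$ through the two Mori contractions arising from its two extremal rays and, via their product morphism, to realize $X$ either as an explicit hypersurface in $\mathbb{P}^2\times\mathbb{P}^2$ or as a low-degree cover of $\mathbb{P}^1\times\mathbb{P}^2$ or of such a hypersurface.

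First I would show that both extremal rays give rise to Mori fiber spaces $f_i\colon X\to Y_i$. Since $-K_X$ is ample and $\rho(X)=2$, both boundary rays of $\overline{\mathrm{NE}}(X)$ are $K_X$-negative and contractible. The equality $\mathrm{Nef}(X)=\mathrm{Psef}(X)$ rules out divisorial contractions, because the exceptional divisor $E$ would be effective yet satisfy $E\cdot C<0$ on any contracted curve $C$, contradicting nefness. Small contractions are excluded under the locally factorial canonical threefold hypothesis via the standard three-dimensional MMP. Hence both $f_i$ are of fiber type, $\rho(Y_i)=1$, and $\dim Y_i\in\{1,2\}$. A one-dimensional target must be $\mathbb{P}^1$. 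For a two-dimensional target, I would transfer the conditions $\mathrm{Nef}=\mathrm{Psef}$ and $\rho=1$ to $Y_i$, exploit that $Y_i$ inherits mild singularities and rational connectedness from $X$, and invoke the classification of Picard rank one surfaces to conclude $Y_i\cong\mathbb{P}^2$.

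Next I would analyze the product morphism $\phi=(f_1,f_2)\colon X\to Y_1\times Y_2$ by the dimensions of the targets. The case $(1,1)$ would make $\phi$ a surjective fibration in curves over $\mathbb{P}^1\times\mathbb{P}^1$, forcing $\rho(X)\ge 3$, so it does not occur. In case $(1,2)$ the general fibers of $f_1$ and $f_2$ have complementary dimensions, hence $\phi\colon X\to\mathbb{P}^1\times\mathbb{P}^2$ is finite; degree one yields an isomorphism and case~(1), while degree two gives a double cover whose branch divisor of bidegree $(2c,2d)$ is constrained by the ampleness of $(2-c,3-d)$ (to keep $-K_X$ ample) and by the requirement that the branch not split as a pullback (otherwise $X$ splits as a product and $\rho(X)\ge 3$), leaving exactly $(c,d)=(1,1)$ and $(c,d)=(1,2)$, i.e.\ cases~(2) and~(3); higher degrees violate the anti-canonical bound. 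In case $(2,2)$, $\phi\colon X\to\mathbb{P}^2\times\mathbb{P}^2$ is finite onto a hypersurface $Z$ of bidegree $(a,b)$; when $\phi$ is birational onto $Z$, the ampleness of $-K_X=\phi^*(3-a,3-b)|_Z$ restricts $(a,b)$ to $(1,1),(1,2),(2,2)$ up to symmetry, matching cases~(4), (5), (6); when $\deg\phi=2$, an analogous computation of $-K_X=\pi^*(-K_Z-L)$ shows that only $(a,b)=(1,1)$ leaves enough anti-canonical positivity and forces $L=(1,1)|_Z$, so the branch is cut out by a $(2,2)$-hypersurface, giving case~(7).

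The main obstacle I anticipate is the case $(2,2)$, where pinning down both the bidegree of $Z$ and the degree of $\phi\colon X\to Z$ requires a careful combination of three ingredients: the Fano condition on $X$, the singularity hypotheses (isolated, locally factorial, canonical, smooth in codimension $2$) on $X$ descending to $Z$, and the $\mathrm{Nef}=\mathrm{Psef}$ condition controlling which intermediate covers can actually occur. In particular, excluding apparently numerically admissible configurations and verifying that the unique degree-two cover in case~(7) exists and satisfies the full list of structural hypotheses forms the core technical work; the construction of the double covers relies crucially on the locally factorial hypothesis to provide the square-root line bundle defining the cover. A recurrent secondary difficulty is ensuring that all hypotheses on $X$ transfer correctly to the targets $Y_i$ and to the image hypersurface $Z$, so that the classification of Picard-number-one surfaces and threefolds can actually be applied at each step.
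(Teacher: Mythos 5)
Your global strategy coincides with the paper's: both extremal contractions are Mori fibrations with targets $\p^1$ or $\p^2$, the product map is finite onto its image, and one then bounds the degree of the resulting cover of $\p^1\times\p^2$ or of a hypersurface in $\p^2\times\p^2$. The genuine gap is in the degree bounds themselves, which you attribute to ``the anti-canonical bound''; that mechanism does not work for an arbitrary finite morphism. In the $\p^1\times\p^2$ case, write $-K_X\equiv a_1A_1+a_2A_2$ with $A_1=f_1^*(\mathrm{point})$, $A_2=f_2^*(\mathrm{line})$ and $d=\deg(X\to\p^1\times\p^2)$. From $A_1A_2^2=d$ and $-K_X\cdot A_2^2=2$ one gets $a_1=2/d$, and then $(-K_X)^3=6a_2^2$ is \emph{independent of $d$}: ampleness of $-K_X$ imposes no bound on $d$ at all. (Your bound is valid for a cyclic cover branched along a divisor divisible by $d$, but a priori the finite morphism has no such structure.) The paper's Proposition \ref{prop12-finite-over-12} obtains $d\le 2$ from two ingredients absent from your proposal: $(-K_X)^3$ is a positive \emph{even integer} for a Gorenstein canonical Fano threefold, which forces $a_2\in\mathbb{Z}$; and the Cartier divisor $-K_X-a_2A_2\equiv\frac{2}{d}A_1$, being numerically trivial on the fibers of the Fano fibration $f_1$, is linearly equivalent to an integral multiple of $A_1$, which forces $2/d\in\mathbb{Z}$. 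The same evenness of $(-K_X)^3$ drives the $\p^2\times\p^2$ case (Proposition \ref{prop12-finite-over-22}): for a degree-$d$ cover of a bidegree-$(p,q)$ hypersurface one needs $\frac{12}{d^2}(p^{-2}+q^{-2})\in\mathbb{Z}$, and it is this constraint --- not positivity --- that excludes, for instance, a double cover of a $(1,2)$-hypersurface branched along a $(2,0)$-divisor, for which $-K_X$ would be the pullback of the perfectly ample class $(1,1)$.

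Two further points you flag but leave unresolved, and which require actual arguments in the paper: when $\phi$ is birational onto the hypersurface $Z\subseteq\p^2\times\p^2$ you must know $Z$ is normal to conclude $X\cong Z$ (Lemma \ref{lem-normal-divisor}, proved via equidimensionality and flatness of both projections of $Z$), and in case 7 you must show $W$ is \emph{smooth} of bidegree $(1,1)$ (the paper: every fiber of the equidimensional flat projection $W\to\p^2$ is a line, so the projection is a smooth morphism). Finally, the case $B_1=B_2=\p^1$ is excluded more directly than by your Picard-number argument: $f_1\times f_2$ is finite onto its image because $R_1\cap R_2=\{0\}$, and a threefold admits no finite morphism to a surface.
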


We note that if we assume that $X$ is smooth, then Theorem \ref{thm-classification-3f} follows from the classification of smooth Fano threefolds due to Mori and Mukai (\cite{MM81}).  In all dimensions, we prove the following classification theorem.

\begin{thm}
\label{thm-classification}
Let $X$ be a  Fano variety of dimension at least $2$,   with locally factorial canonical singularities such that $X$ is smooth in codimension $2$, $\rho(X) = \mathrm{dim}\, X-1$ and  $\mathrm{Nef}(X) = \mathrm{Psef}(X)$.  Then $X\cong X_1\times X_2$ such that
\begin{enumerate}
\item[-] $X_1$ is either a point or a variety satisfying the condition  of Theorem \ref{thm-double-cover-p1};
\item[-] $X_2$ is a finite cover, of degree $1, 2 $ or $4$, over $(\p^1)^s  \times \p^2$ or  $(\p^1)^s  \times W$, where $s\geqslant 0$ is an integer, and $W$ is a normal hypersurface in $\p^2\times \p^2$. 
\end{enumerate}
More precisely, $X_2$ is either $\p^2$, or one of the varieties in Theorem \ref{thm-classification-3f}, or a variety obtained by  Construction \ref{exmp-112},   or a variety  obtained by Construction \ref{exmp-122}.
\end{thm}

This   theorem  is false without assuming the variety  is smooth in codimension $2$. In fact, even in dimension $2$, there are surfaces $X$ other than $\p^2$ which have Picard number $1$ and locally factorial canonical singularities. They are Fano surfaces which have exactly one singular point which is canonical of type $E_8$ (see \cite[Lem. 6]{MZ88}).

\textit{Outline of the proof of the theorems.} Let $X$ be a   variety satisfying the condition in Theorem \ref{thm-classification}.   We first consider the case when $\mathrm{dim}\, X=3$. In this case, the  Mori cone $\NE(X)$ has exactly two extremal rays $R_1,R_2$. Let $f_i \colon X\to B_i$ be the extremal contraction with respect to $R_i$ for $i=1,2$. We can prove that $B_i$ is either $\p^1$ or $\p^2$ and the product $f_1\times f_2$ is finite onto its image. Hence $X$ is a finite cover of $\p^1\times \p^2$ or a finite cover of a hypersurface of $\p^2\times \p^2$. With the help of two results on finite morphisms between Fano threefolds, we can deduce the proof of  Theorem \ref{thm-classification-3f}.

After this, we consider the case when $n=\mathrm{dim}\, X\geqslant 4$. The   proof of Theorem \ref{thm-classification} is by induction on the dimension of $X$.  We will first  prove that  there is a fibration $g \colon X\to \p^1$ since $n\geqslant 4$. For this property, we can reduced to the case when $n=4$. The proof of the four dimensional case is given in section \ref{Fano fourfolds satisfying $(*)$}.

There is a Mori fibration $h \colon X\to Y$ such that $h\times g \colon X \to Y\times \p^1$ is a finite surjective morphism. We can prove further that $Y$  also  satisfies the condition in Theorem \ref{thm-classification}.  By induction, we assume that  Theorem \ref{thm-classification} is true in dimension smaller than $n$. In particular, $h\times g$ induces a finite surjective morphism $f \colon X\to Z$ such that   $Z$ is either $\p^1\times \cdots \times \p^1\times \p^2$ or $\p^1\times \cdots \times \p^1\times W$, where $W$ is a normal ample hypersurface in $\p^2\times \p^2$.  In the end, we will   conclude the proof of  Theorem \ref{thm-classification} by studying some finite morphisms between Fano varieties.

\textbf{Acknowledgement}: The author would like to thank his advisor St\'ephane Druel for suggesting  this problem and for his help. He is also very grateful to the referee for reading  this paper carefully  and  providing  enormous helpful remarks.

\section{Notation and preliminaries}

In this paper, we will work over $\mathbb{C}$, the field of complex numbers. A variety is a separated integral scheme of finite type defined over $\mathbb{C}$.   If $X$ is a variety, then we denote by $(X)^k$ the product of $k$ copies of $X$.  

A normal variety $X$ is called locally factorial (resp. $\mathbb{Q}$-factorial) if for any Weil divisor $D$ in $X$, $D$ (resp. some multiple of $D$)  is a Cartier divisor.  We refer to \cite[\S 2.3]{KM98} for the definition of klt singularities and  canonical singularities. We note that locally factorial  klt  singularities are canonical. We denote by $K_X$ a canonical divisor of $X$. The variety $X$ is said to be Gorenstein if $X$ is Cohen-Macaulay and $K_X$ is a Cartier divisor. If $D$ and $E$ are two $\mathbb{R}$-divisors in $X$ which are numerically equivalent, then we write $D\equiv  E$. Let $\mathrm{Pic}(X)$ be the Picard group of $X$ and let $\mathrm{N}^1(X)$ be the vector space $\mathrm{Pic}(X)\otimes \mathbb{R}/{\equiv}$.  We denote the real vector space of $1$-cycles modulo numerical equivalence by $\mathrm{N}_1(X)$. These two spaces, $\N_1$ and $\N^1$,  are dual to each other by the intersection pairing.  The dimension $\rho(X)$ of $\mathrm{N}^1(X)$ is called the Picard number of $X$.  Let $\mathrm{Nef}(X)$ be the cone of nef divisors in $X$. Let $\mathrm{Psef}(X)$ be the cone of pseudo-effective divisors, that is, the closed convex cone generated by effective divisors. Then we have $$\mathrm{Nef}(X) \subseteq \mathrm{Psef}(X) \subseteq \N^1(X).$$ Let $\NE(X)$ be the  Mori cone, that is, the closed convex cone generated by effective $1$-cycles in $\N_1(X)$. Then $\NE(X)$ is dual to the cone $\mathrm{Nef}(X)$.

A normal projective variety $X$ is called a Fano variety if its anti-canonical divisor $-K_X$ is a $\mathbb{Q}$-ample divisor, that is, some multiple  of $-K_X$ is a Cartier ample divisor. If $X$ is a Fano variety with $\mathbb{Q}$-factorial klt singularities, then the cone $\NE(X)$ is polyhedral by the Cone Theorem (see \cite[\S 3]{KM98}).

A fibration $f \colon X\to Y$ between normal varieties is a proper surjective morphism with connected fibers. The relative dimension of $f$ is   $\mathrm{dim}\, X - \mathrm{dim}\, Y$. The discriminant of $f$ is the largest closed subset of $Y$ over which $f$ is not smooth.  A projective fibration $f \colon X\to Y$ is said to be a Fano fibration if some multiple of $-K_X$ is  Cartier and  relatively ample. 

Let  $f \colon X\to Y$ be a Fano fibration between projective normal varieties such that $X$ has $\mathbb{Q}$-factorial klt singularities.  The relative Picard number of $f$ is the dimension of the closed convex cone in $\N_1(X)$ generated by the curves contracted by $f$. This number is equal to $\rho(X)-\rho(Y)$ (see \cite[Lem. 3-2-5]{KMM87}). The Fano fibration $f$ is called a Mori fibration if its relative Picard number is $1$.

\section{Double covers between normal varieties}
We recall the notion of cyclic covers. Let $Y$ be a normal variety and $\sL$ a line bundle on $Y$.  Assume that there is a positive integer $k$ and a section $s\in H^0(Y,\sL^{\otimes k})$.  Let $D$ be the  subscheme defined by $s=0$. Assume that $D$ is reduced. Then there is a cyclic cover $g \colon Z\to Y$ with respect to the isomorphism $\sL^{\otimes k} \cong \sO_Y(D)$ induced by $s$.  The morphism $g$ is branched exactly along $D$ and $g^*D=k \cdot \mathrm{Supp}\, (g^*D)$ (see \cite[Def. 2.50]{KM98}). By construction, $Z$ may not be normal. However, we have the following result.

\begin{lemma}
\label{lem-double-cover-normal}
Let $Y$ be a normal Cohen-Macaulay variety and let $D$ be a reduced divisor. Assume that there is a line bundle $\sL$ on $Y$ such that $\sL^{\otimes k}\cong \sO_Y(D)$ for some $k>0$. If $f \colon Z\to Y$ is the corresponding cyclic cover, then $Z$ is  normal and Cohen-Macaulay.
\end{lemma}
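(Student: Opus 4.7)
The plan is to apply Serre's criterion ($R_1 + S_2 \Rightarrow$ normal) and to deduce Cohen--Macaulayness separately from the finite flat structure of $f$.

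First I set up the local picture. On any affine open $U = \Spec A \subseteq Y$ on which $\sL$ is trivialized by a generator $t$, the defining section $s$ of $D$ writes as $s|_U = h\cdot t^k$ for some $h\in A$, and by the construction of a cyclic cover, $f^{-1}(U) = \Spec A[z]/(z^k - h)$. Hence $f_*\sO_Z \cong \bigoplus_{i=0}^{k-1} \sL^{-i}$ is locally free of rank $k$, so $f$ is finite and flat. Since $Y$ is Cohen--Macaulay and all fibers of $f$ are artinian (hence Cohen--Macaulay), a finite flat morphism over a Cohen--Macaulay base has Cohen--Macaulay source, so $Z$ is Cohen--Macaulay; in particular $Z$ satisfies Serre's condition $S_2$.

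Next I verify $R_1$. Let $\zeta\in Z$ be any codimension-one point and set $\eta = f(\zeta) \in Y$; since $f$ is finite, $\eta$ has codimension one in $Y$ (the codimension-zero case being trivial, as $\sO_{Y,\eta}$ is then a field). If $\eta\notin D$, then $h$ is a unit in $\sO_{Y,\eta}$, the polynomial $z^k - h$ is separable, and $f$ is \'etale near $\zeta$; regularity of $\sO_{Z,\zeta}$ then follows from regularity of $\sO_{Y,\eta}$, which holds by normality of $Y$. If $\eta\in D$, then $\sO_{Y,\eta}$ is a DVR with some uniformizer $\pi$, and reducedness of $D$ forces a local equation $h = u\pi$ with $u$ a unit. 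The polynomial $z^k - u\pi$ is then Eisenstein, so $\sO_{Y,\eta}[z]/(z^k - u\pi)$ is a one-dimensional local integral domain with maximal ideal $(z)$, i.e.\ a DVR; moreover it coincides with $\sO_{Z,\zeta}$ because the fiber over $\eta$ is the single point $\Spec \kappa(\eta)[z]/(z^k)$. Hence $\sO_{Z,\zeta}$ is regular.

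Combining $R_1$ and $S_2$, Serre's criterion gives that $Z$ is normal, and the Cohen--Macaulay claim has already been verified. The only subtle step is the Eisenstein analysis at codimension-one points lying over $D$: it is precisely here that both hypotheses --- normality of $Y$ (yielding a DVR downstairs) and reducedness of $D$ (preventing $h$ from being a higher power of $\pi$) --- come into play. Without reducedness one would have $h = u\pi^m$ with $m\geqslant 2$, and the resulting finite extension would typically fail to be normal, so this is the essential content of the lemma.
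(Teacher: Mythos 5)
Your proof is correct and follows the same overall strategy as the paper's: Cohen--Macaulayness of $Z$ comes from the finite flat structure of $f$ over the Cohen--Macaulay base, and normality then reduces, via Serre's criterion, to checking regularity in codimension one. The only difference is in how $R_1$ is verified: the paper restricts to the open subset (with complement of codimension at least $2$) where both $Y$ and $D$ are smooth and uses that the cyclic cover is smooth there, whereas you perform a direct local computation at codimension-one points via the \'etale/Eisenstein dichotomy; both arguments are valid and use the hypotheses (normality of $Y$, reducedness of $D$) in the same way.
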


\begin{proof}
We may assume that $Y=\mathrm{Spec}\, A$ is affine. Then $Z\cong \mathrm{Spec}\, A[T]/(T^k-s)$, where $T$ is an indeterminate and $s=0$ is the equation defining $D$. Since $Y$ is Cohen-Macaulay, so is $\mathrm{Spec}\, A[T]$. Thus $Z$ is also Cohen-Macaulay for it is a Cartier divisor in $\mathrm{Spec}\, A[T]$. Hence, we only need to prove that $Z$ is smooth in codimension $1$. However, there is an open subset $Y_0$ of $Y$ such that $\mathrm{codim}\, Y\backslash Y_0 \geqslant 2$ and both $Y_0$ and $D|_{Y_0}$ are smooth. Thus $Z_0=g^{-1}(Y_0)$ is smooth and $\mathrm{codim}\, Z\backslash Z_0 \geqslant 2$. This shows that $Z$ is normal.
\end{proof}

\begin{lemma}
\label{lem-fiber-product-normal}
Let $f \colon X\to Y$ be an equidimensional fibration between normal Cohen-Macaulay varieties. Assume that there is an open subset $U$ of $Y$ whose complement has codimension at least $2$ such that $f$ has reduced fiber over $U$. Let  $g \colon Y'\to Y$ be a cyclic cover with $Y'$  normal. Let $X'$ be the fiber product $X\times_Y Y'$. Then $X'$ is a normal Cohen-Macaulay  variety.
\end{lemma}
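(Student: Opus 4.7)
The plan is to exhibit $X'$ as a cyclic cover of $X$ and apply Lemma \ref{lem-double-cover-normal}. Since $g:Y'\to Y$ is the cyclic cover associated to a line bundle $\sL$ and a section $s\in H^0(Y,\sL^{\otimes k})$ with zero locus $D$, we have $Y'=\Spec_Y\big(\bigoplus_{i=0}^{k-1}\sL^{-i}\big)$ with multiplication twisted by $s$. Pulling this back along $f$, the fiber product $X'=X\times_Y Y'$ equals $\Spec_X\big(\bigoplus_{i=0}^{k-1}(f^*\sL)^{-i}\big)$ with multiplication twisted by $f^*s\in H^0(X,(f^*\sL)^{\otimes k})$. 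Hence $X'$ is the cyclic cover of $X$ with respect to $f^*\sL$ and $f^*s$, branched along $f^*D$. Because $X$ is normal and Cohen--Macaulay, Lemma \ref{lem-double-cover-normal} will give that $X'$ is normal and Cohen--Macaulay provided the Weil divisor $f^*D$ is reduced.

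I first observe that $D$ itself is reduced. Let $D_0$ be a prime component of $D$ with multiplicity $m$; its generic point $y$ is a codimension-$1$ point of the normal variety $Y$, so $\sO_{Y,y}$ is a DVR with uniformizer $\pi$ and the local equation of $D$ is $u\pi^m$ for some unit $u$. Locally over $y$, $Y'=\Spec \sO_{Y,y}[T]/(T^k-u\pi^m)$, and a direct computation of its integral closure shows that this ring fails to be normal whenever $m\ge 2$; the hypothesis that $Y'$ is normal therefore forces $m=1$. Writing $D=\sum_i D_i$ with the $D_i$ prime and distinct, and noting that $D_i\cap D_j$ has codimension $\ge 2$ in $Y$, equidimensionality of $f$ implies that $f^{-1}(D_i)$ and $f^{-1}(D_j)$ share no codimension-$1$ component in $X$. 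It therefore suffices to show that each $f^*D_i$ is reduced.

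Fix $i$ and let $E\subset X$ be a codimension-$1$ component of the support of $f^*D_i$. Equidimensionality of $f$ forces $f(E)=D_i$: otherwise $f(E)\subsetneq D_i$ would have codimension $\ge 2$ in $Y$ and then $E$ would have codimension $\ge 2$ in $X$. Let $\xi$ and $y$ be the generic points of $E$ and $D_i$, so that $f(\xi)=y$. Since $Y\setminus U$ has codimension $\ge 2$ and $D_i$ has codimension $1$, we have $y\in U$, so by hypothesis $f^{-1}(y)$ is reduced. Now $\sO_{X,\xi}$ is a DVR by normality of $X$, with some uniformizer $\tau$, and the local equation $s_i$ of $D_i$ at $y$ pulls back to $f^*s_i=v\tau^{m_E}$ in $\sO_{X,\xi}$, where $v$ is a unit and $m_E$ is the multiplicity of $f^*D_i$ along $E$. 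Because $\sO_{Y,y}$ is a DVR with uniformizer $s_i$, the local ring of the fiber $f^{-1}(y)$ at $\xi$ is
\[
\sO_{X,\xi}\big/ s_i\,\sO_{X,\xi}\;=\;\sO_{X,\xi}\big/(\tau^{m_E}),
\]
an Artinian local ring of length $m_E$. Reducedness of the fiber forces $m_E=1$.

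With $f^*D$ now known to be reduced, Lemma \ref{lem-double-cover-normal} applied to the cyclic cover $X'\to X$ immediately gives that $X'$ is normal and Cohen--Macaulay, as desired. The main technical obstacle is the multiplicity computation in the previous paragraph, which combines normality of $X$ (so that $\sO_{X,\xi}$ is a DVR), normality of $Y$ at the codimension-$1$ point $y$ (so that the local equation of $D_i$ is a uniformizer of $\sO_{Y,y}$), and the reduced-fiber hypothesis over $U$.
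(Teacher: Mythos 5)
Your proof is correct and follows essentially the same route as the paper: identify $X'\to X$ as the cyclic cover associated with $f^*\sL$ and $f^*D$, check that $f^*D$ is reduced, and invoke Lemma \ref{lem-double-cover-normal}. The only difference is that you spell out the two steps the paper leaves implicit (normality of $Y'$ forces $D$ reduced; equidimensionality plus reduced fibers over $U$ force $f^*D$ reduced), and your local DVR computations for both are sound.
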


\begin{proof}
Assume that $g$ is the cyclic cover with respect to some isomorphism $\sL^{\otimes k} \cong \sO_Y(D)$, where $D$ is a  Cartier divisor and $\sL$ is a line bundle on $Y$. Since $Y'$ is normal, $D$ is reduced. Then the natural morphism $X'\to X$ is the cyclic cover with respect to the isomorphism $(f^*\sL)^{\otimes k} \cong \sO_X(f^*D)$. Since $f$ is equidimensional and has reduced fibers over $U$, $f^*D$ is a reduced divisor in $X$.  Since $X$ is Cohen-Macaulay, by Lemma \ref{lem-double-cover-normal}, $X'$ is normal Cohen-Macaulay.
\end{proof}

We will prove some properties of double covers.

\begin{lemma}
\label{lem-double-cover}
Let $f \colon X\to Y$ be a double cover between normal varieties with $Y$   locally factorial. Let $D$ be the codimension $1$ part of the discriminant of $f$. Then  there is a line bundle $\sL$ on $Y$ such that $\sL^{\otimes 2}\cong \sO_Y(D)$. If $g \colon Z\to Y$ is the corresponding cyclic cover, then $X$ is the normalization of $Z$.  Moreover, if $Y$ is Cohen-Macaulay, then  $X$ is smooth if and only if both $D$ and $Y$ are smooth.
\end{lemma}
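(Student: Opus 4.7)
The plan is to build the line bundle $\sL$ from the sheaf of algebras $f_*\sO_X$ via the trace decomposition available in characteristic zero, then recognise this algebra as the cyclic-cover algebra $g_*\sO_Z$, and finally reduce the smoothness criterion to a local Jacobian computation after invoking miracle flatness. The main subtlety lies in the first step: ensuring that the $-1$-eigensheaf is a line bundle on all of $Y$ and that the resulting section cuts out exactly $D$.

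In characteristic zero the map $\tfrac{1}{2}\mathrm{tr}\colon f_*\sO_X \to \sO_Y$ splits the structural inclusion, yielding a canonical decomposition $f_*\sO_X = \sO_Y \oplus \sF$, where $\sF$ is the $-1$-eigensheaf of the Galois involution. Normality of $X$ and $Y$ combined with finiteness of $f$ makes $f_*\sO_X$ reflexive as an $\sO_Y$-module, and so is its direct summand $\sF$; since $\sF$ has rank one and $Y$ is locally factorial, $\sF \cong \sL^{-1}$ for some line bundle $\sL$. The product of two $-1$-eigensections lies in the $+1$-eigensheaf, so multiplication induces a pairing $\sL^{-1}\otimes\sL^{-1}\to \sO_Y$, that is, a section $s \in \H^0(Y,\sL^{\otimes 2})$. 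Verifying $V(s)=D$ reduces to a local computation at the generic points of $D$, where $f$ is \'etale locally of the form $t^2 = u$ with $u$ a local equation of $D$; the pairing then becomes $t\cdot t = u$, and we obtain the isomorphism $\sL^{\otimes 2} \cong \sO_Y(D)$. By construction, $g_*\sO_Z = \sO_Y\oplus \sL^{-1}$ with algebra structure given by the very same section, so there is a canonical morphism of $\sO_Y$-algebras $g_*\sO_Z \to f_*\sO_X$ which is an isomorphism at the generic point. The induced finite birational $Y$-morphism $X\to Z$ then exhibits $X$ as the normalisation of $Z$ by the universal property.

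For the smoothness assertion, assume $Y$ is Cohen-Macaulay. If $Y$ and $D$ are smooth, then $D$ is reduced, so Lemma \ref{lem-double-cover-normal} gives that $Z$ is already normal, hence $X = Z$; the local model $\Spec\, \sO_{Y,y}[t]/(t^2 - s)$, with $s$ a local equation of $D$, is then smooth by the Jacobian criterion. Conversely, if $X$ is smooth it is Cohen-Macaulay, so miracle flatness applied to the finite morphism between equidimensional Cohen-Macaulay schemes forces $f$ to be flat; hence $f_*\sO_X$ is locally free and the same local model applies. A direct computation of $\frm_x/\frm_x^2$ at a closed point $x$ of $X$ over $y \in D$ shows that regularity of $X$ requires both $Y$ regular at $y$ and $s\notin \frm_{Y,y}^2$, that is, $D$ smooth at $y$; away from $D$ the morphism $f$ is \'etale, so smoothness of $X$ at $x$ is equivalent to smoothness of $Y$ at $y$. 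Combining the two cases yields the desired equivalence.
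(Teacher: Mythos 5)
Your construction of $\sL$ is correct but takes a genuinely different route from the paper's. The paper restricts to a big open subset $Y_0$ of the smooth locus over which $X_0=f^{-1}(Y_0)$ and $D|_{Y_0}$ are smooth, invokes the classical structure theory of smooth double covers to obtain $\sL_0$ on $Y_0$ with $\sL_0^{\otimes 2}\cong\sO_Y(D)|_{Y_0}$, and then uses local factoriality to extend $\sL_0$ to a line bundle on all of $Y$. You instead work globally with the trace decomposition $f_*\sO_X=\sO_Y\oplus\sF$, use reflexivity plus local factoriality to identify $\sF\cong\sL^{-1}$, and read off the section $s$ from the multiplication pairing; local factoriality enters at the same point in both arguments, but your route has the advantage of exhibiting $f_*\sO_X$ and $g_*\sO_Z$ as the same $\sO_Y$-algebra, which makes the assertion about the normalisation (and, once $Y$ is Cohen--Macaulay and Lemma \ref{lem-double-cover-normal} applies, the identification $X\cong Z$) transparent, whereas the paper leaves the existence of the map $X\to Z$ implicit. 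The smoothness criterion is then the same cotangent-space computation as \cite[Lem. 2.51]{KM98}, which is what the paper cites.

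One step is misjustified, though it does not sink the proof. In the converse direction of the smoothness claim you derive flatness of $f$ from ``miracle flatness applied to the finite morphism between equidimensional Cohen--Macaulay schemes''; miracle flatness requires the \emph{target} to be regular, which is exactly what is at stake, and a finite surjective morphism between normal Cohen--Macaulay varieties of the same dimension need not be flat (e.g.\ $\mathbb{A}^2\to\mathbb{A}^2/\{\pm 1\}$, where the anti-invariant eigensheaf is a non-free reflexive module). Fortunately you do not need this: you have already shown $f_*\sO_X\cong\sO_Y\oplus\sL^{-1}$, which is locally free, and once $Y$ is Cohen--Macaulay, Lemma \ref{lem-double-cover-normal} gives that $Z$ is normal (since $D$ is reduced), hence $X\cong Z$ and the local model $\Spec\,\sO_{Y,y}[t]/(t^2-s)$ is available in both directions of the equivalence. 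Replace the appeal to miracle flatness by this observation.
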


\begin{proof}
Let $Y_0$ be the largest open subset contained in the smooth locus of $Y$ such that both its preimage $X_0=f^{-1}(Y_0)$ and the divisor $D|_{Y_0}$ are smooth. Then $\mathrm{codim}\, Y\backslash Y_0 \geqslant 2$ and there is a line bundle $\sL_0$ on $Y_0$ such that $f|_{X_0}$ is the cyclic cover with respect to  $\sL_0^{\otimes 2}\cong (\sO_Y(D))|_{Y_0}$ (See \cite[\S 0.1]{CD89}).  Since $Y$ is locally factorial, there is a line bundle $\sL$ on $Y$ such that $\sL|_{Y_0}\cong \sL_0$. Then $\sL^{\otimes 2}\cong \sO_Y(D)$. Let $g \colon Z\to Y$ be the corresponding cyclic cover. Since $X$ is normal, it is the normalization of $Z$.

If $Y$ is  Cohen-Macaulay, then $X\cong Z$ by Lemma \ref{lem-double-cover-normal}. Thus $X$ is smooth if and only if $D$ and $Y$ are smooth (see \cite[Lem. 2.51]{KM98}).
\end{proof}

\begin{lemma}
\label{lem-double-cover-Fano}
Let $f \colon X\to Y$ be a double cover between normal  varieties. Assume that $Y$ is $\mathbb{Q}$-factorial. Let $D\subseteq Y$ be the codimension $1$ part of the discriminant of $f$. Then $X$ is  Fano if and only if $-K_Y-\frac{1}{2}D$ is  $\mathbb{Q}$-ample. 
\end{lemma}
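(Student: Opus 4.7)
The plan is to reduce the lemma to the Riemann--Hurwitz/ramification formula for the finite morphism $f$, and then invoke the fact that ampleness is stable in both directions under pullback by finite surjective morphisms.

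First I would establish the divisor-theoretic identity $-K_X \sim_{\mathbb{Q}} f^*\bigl(-K_Y - \tfrac{1}{2}D\bigr)$. Since $Y$ is $\mathbb{Q}$-factorial, both $K_Y$ and $D$ are $\mathbb{Q}$-Cartier on $Y$, so $-K_Y - \tfrac{1}{2}D$ makes sense as a $\mathbb{Q}$-Cartier $\mathbb{Q}$-divisor and can be pulled back by $f$. Working on a suitable open subset $Y_0 \subseteq Y$ whose complement has codimension at least $2$, on which $Y$ and $D$ are smooth, the double cover $f|_{X_0}$ is \'etale outside $D|_{Y_0}$ and simply ramified along the reduced smooth divisor $R_0 := (f|_{X_0})^{-1}(D|_{Y_0})_{\mathrm{red}}$, so that $f^*D|_{Y_0} = 2R_0$ and the ramification formula gives $K_{X_0} = (f|_{X_0})^*K_{Y_0} + R_0$. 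Extending these Weil divisor identities across the codimension $\ge 2$ complements by normality yields $f^*D = 2R$ and $K_X = f^*K_Y + R$, hence $K_X = f^*\bigl(K_Y + \tfrac{1}{2}D\bigr)$ as $\mathbb{Q}$-divisors; in particular $K_X$ is automatically $\mathbb{Q}$-Cartier.

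Next I would translate the $\mathbb{Q}$-ampleness statement. The forward direction is immediate: if $-K_Y - \tfrac{1}{2}D$ is $\mathbb{Q}$-ample, then its pullback $-K_X$ under the finite morphism $f$ is also $\mathbb{Q}$-ample, so $X$ is Fano. For the converse, I would use the standard fact that if $h : X \to Y$ is a finite surjective morphism of projective varieties and $A$ is a $\mathbb{Q}$-Cartier $\mathbb{Q}$-divisor on $Y$ such that $h^*A$ is $\mathbb{Q}$-ample, then $A$ is $\mathbb{Q}$-ample; this can be verified by the Nakai--Moishezon criterion (positivity of intersection numbers is preserved since $h$ is finite of a fixed degree) or equivalently by the affineness criterion $\mathrm{Spec}(\bigoplus h_*\mathcal{O}_X(mh^*A)) \to \mathrm{Spec}(\bigoplus \mathcal{O}_Y(mA))$. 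Applying this to $h=f$ and $A = -K_Y - \tfrac{1}{2}D$ finishes the equivalence.

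The only real subtlety, which I would want to state carefully, is the identification $f^*D = 2R$ and the ramification formula on the singular locus. This is handled simply by pushing everything to the large open subset $Y_0$ where things are smooth and extending by normality, since both sides are Weil divisors determined by their restrictions to any open subset with complement of codimension $\ge 2$; no further work is needed beyond the $\mathbb{Q}$-factoriality of $Y$, which guarantees that the resulting identity of Weil divisors is in fact an identity of $\mathbb{Q}$-Cartier divisors that can be manipulated globally.
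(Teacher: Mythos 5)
Your proof is correct and follows essentially the same route as the paper: the paper's own argument is exactly the two steps you describe, namely the ramification identity $-K_X \sim_{\mathbb{Q}} -f^*(K_Y+\tfrac{1}{2}D)$ (available because $\mathbb{Q}$-factoriality makes $K_Y+\tfrac{1}{2}D$ a $\mathbb{Q}$-Cartier divisor) followed by the equivalence of $\mathbb{Q}$-ampleness under pullback by a finite surjective morphism. You merely spell out the codimension-$2$ reduction and the Nakai--Moishezon descent that the paper leaves implicit.
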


\begin{proof}
Since $Y$ is $\mathbb{Q}$-factorial,   $-K_{X}$ is $\mathbb{Q}$-linearly equivalent to $ -f^*(K_{Y}+\frac{1}{2}D)$. Since $f$ is finite, we obtain that $-K_X$ is $\mathbb{Q}$-ample if and only if $-K_Y-\frac{1}{2}D$ is  $\mathbb{Q}$-ample. 
\end{proof}

\section{Construction of Fano varieties with $\mathrm{Nef}(X)=\mathrm{Psef}(X)$ and $\rho(X)=\mathrm{dim}\, X-1$}

In this section, we will provide two kinds of  methods  to construct Fano varieties with $\mathrm{Nef}(X)=\mathrm{Psef}(X)$ and $\rho(X)=\mathrm{dim}\, X-1$.

\begin{const}
\label{exmp-112} 
We will first give two methods to construct  Fano varieties $X$ of dimension $n\geqslant 3$   which are   finite covers of the product  $M=(\p^1)^{n-2}\times \p^2$.

\begin{enumerate}
\item[1.] Let $X$ be a double cover  over $M$, branched along a  prime   divisor of degree $(2,...,2,k)$, where $k\in \{2,4\}$.

\item[2.] We assume that $n \geqslant 4$. We can write $M= (\p^1)^r \times \p^2 \times (\p^1)^{s}$ such that $r,s>0$ and $r+s=n-2$. Let $X_{r,2}$ be a double cover of $(\p^1)^r \times \p^2$, branched along a  prime divisor $D_1$ of degree $(2,...,2,2)$. 
Set $M_1=X_{r,2}\times (\p^1)^{s}$.  Let $X$ be a double cover of $M_1$, branched along a  prime  divisor $D$ which is equivalent to the pullback of some prime divisor  of degree $$(\underbrace{0,...,0}_{r},2,\underbrace{2,...,2}_{s})$$ \textit{via} 
$X_{r,2}\times (\p^1)^s\to (\p^1)^r \times \p^2\times (\p^1)^{s}$.   We note that $X$ is indeed isomorphic to  the following fiber product 

\centerline{
\xymatrix{
 X\cong M_1\times_M M_2 \ar[d] \ar[r]  &M_2 \ar[d]^{p_2} \\
M_1 \ar[r]_{p_1} &  *+[r]{(\p^1)^r\times \p^2\times (\p^1)^s}
}
}
\noindent where $p_1$ is a double cover branched along a prime divisor $D_1\times (\p^1)^s$ of degree  $$(\underbrace{2,...,2}_{r},2,\underbrace{0,...,0}_{s}),$$ and $p_2$ is a double cover branched along a prime divisor $(\p^1)^r \times D_2$ of degree  $$(\underbrace{0,...,0}_{r},2,\underbrace{2,...,2}_{s}).$$

\end{enumerate}
\end{const}

We will check that the constructions above do produce Fano varieties with $\mathrm{Nef}(X)=\mathrm{Psef}(X)$ and $\rho(X)=\mathrm{dim}\, X-1$. We will first prove two lemmas.

\begin{lemma}
\label{lem-relative-Pic}
Let $f \colon X\to Y$ be a  Fano fibration of relative dimension $1$ between normal projective varieties with $\mathbb{Q}$-factorial klt singularities. Then the relative Picard number of $f$ is $1$ if and only if $f^*D$ is irreducible for every prime divisor $D$ in $Y$.
\end{lemma}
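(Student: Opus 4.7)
The plan is to use the equidimensionality of $f$ together with the connectedness of its fibers, combined with an analysis of divisor classes via restriction to the generic fiber $X_{\eta}$. First, because $f$ is equidimensional of relative dimension $1$, for every prime divisor $D\subseteq Y$ the preimage $f^{-1}(D)$ is purely of dimension $\dim Y$, and every irreducible component $E$ of it satisfies $\dim f(E)\geqslant \dim E-1=\dim D$, forcing $f(E)=D$. Hence the prime components of $f^{*}D$ are exactly the vertical primes of $X$ lying over $D$, and each surjects onto $D$ with $1$-dimensional generic fibers.

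For the forward implication, I would argue by contradiction: assume $f^{*}D=\sum_{i=1}^{k}a_{i}E_{i}$ with $k\geqslant 2$ distinct prime components. Pick a general fiber $F$ of $f$ lying over a point $y\notin D$; since $F\cap E_{i}=\emptyset$ for every $i$, we have $E_{i}\cdot F=0$. Since $-K_{X}$ is $f$-ample, $[F]$ is nonzero in $\N_{1}(X/Y)_{\mathbb{R}}$; this space has dimension $\rho(X/Y)=1$, so $[F]$ generates it, and consequently $E_{i}\cdot C=0$ for every $f$-contracted curve $C$. Now fix a general point $d\in D$. The fiber $f^{-1}(d)$ is a connected one-dimensional scheme, equal to the union of the non-empty pieces $G_{i}=E_{i}\cap f^{-1}(d)$. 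Connectedness forces $G_{i}\cap G_{j}\ne\emptyset$ for some $i\ne j$. Taking $C$ to be a component of $G_{i}$ meeting $G_{j}$, we have $C\subseteq E_{i}$, $C\not\subseteq E_{j}$ and $C\cap E_{j}\ne\emptyset$, so $E_{j}\cdot C>0$, a contradiction.

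For the reverse implication, I would bound $\rho(X)-\rho(Y)$ from above via the restriction homomorphism $\N^{1}(X)_{\mathbb{R}}\to \N^{1}(X_{\eta})_{\mathbb{R}}$. Its kernel is the subspace $V$ spanned by classes of vertical prime divisors, because a divisor restricts to zero on $X_{\eta}$ exactly when its support avoids the generic point of $Y$. Since $X$ has klt singularities, $X_{\eta}$ is a smooth Fano curve over $K(Y)$, hence a Brauer-Severi curve of dimension one with $\rho(X_{\eta})\leqslant 1$. The assumption that $f^{*}D$ is irreducible for every prime $D$ means that each vertical prime $E$ equals $\tfrac{1}{a}f^{*}D$ in $\N^{1}(X)_{\mathbb{Q}}$, where $D=f(E)$ and $a$ is the multiplicity of $E$ in $f^{*}D$; thus $V_{\mathbb{Q}}\subseteq f^{*}\N^{1}(Y)_{\mathbb{Q}}$. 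The reverse inclusion being obvious, $V=f^{*}\N^{1}(Y)_{\mathbb{R}}$, which has dimension $\rho(Y)$ by injectivity of $f^{*}$. Hence $\rho(X)-\rho(Y)\leqslant 1$, and combined with $\rho(X/Y)\geqslant 1$ (since $-K_{X}$, being $f$-ample, cannot be a pullback from $Y$) this yields $\rho(X/Y)=1$.

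The main obstacle is the connectedness step in the forward direction: one must verify that for $d$ general in $D$ each piece $G_{i}$ is non-empty of pure dimension one (which uses the full equidimensionality of $f$), so that the combinatorics of the dual graph of $f^{-1}(d)$ really do force two distinct $G_{i}$ to meet. The rest is an interplay between the classical fact that $\rho(X/Y)=\rho(X)-\rho(Y)$ for contractions and the near-triviality of $\N^{1}$ for the generic fiber.
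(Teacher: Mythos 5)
Your proposal takes a genuinely different route from the paper's. The paper proves the hard implication by running a relative $K_X$-MMP over $Y$: the program terminates in a Mori fiber space whose base is identified with $Y$ (using equidimensionality and $\mathbb{Q}$-factoriality of $Y$), and if $\rho(X/Y)\geqslant 2$ some step must be a divisorial contraction, whose contracted divisor lies over a prime divisor $D$ with $f^*D$ reducible. Your forward direction --- connectedness of the fiber over a general $d\in D$ forces two components of $f^*D$ to meet, contradicting $E_j\cdot C=0$ for $f$-contracted curves --- is sound once the genericity details you yourself flag are filled in; the paper does not even write this direction out.

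There is, however, a gap in your reverse direction, precisely at the assertion that the kernel of $\N^1(X)_{\mathbb{R}}\to \N^1(X_\eta)_{\mathbb{R}}$ equals the span $V$ of vertical prime divisors ``because a divisor restricts to zero on $X_\eta$ exactly when its support avoids the generic point of $Y$.'' That reasoning computes the kernel of restriction at the level of divisors (cycles), not of numerical classes. The kernel on $\N^1$ consists of all classes of fiber-degree zero, and a priori it contains combinations of \emph{horizontal} divisors of total degree $0$ on $X_\eta$ which need not lie in $V$; this is exactly what happens for elliptic fibrations with positive Mordell--Weil rank, so the claim is not formal. Since $\dim\ker\leqslant\rho(Y)$ is the crux of your dimension count, this step must be repaired. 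It is repairable in the present setting: $X_\eta$ is a smooth conic over $K(Y)$, so $\mathrm{Pic}(X_\eta)\cong\mathbb{Z}$ is torsion-free of rank one; hence any class $L$ with $\deg\bigl(L|_{X_\eta}\bigr)=0$ satisfies $L|_{X_\eta}\sim 0$, so $L$ is linearly equivalent on $X$ to a divisor whose support misses $X_\eta$, i.e.\ to a vertical divisor, giving $\ker\subseteq V$. With that inserted, the hypothesis that every $f^*D$ is irreducible yields $V\subseteq f^*\N^1(Y)_{\mathbb{Q}}$ and your count closes.
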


\begin{proof}
If the relative Picard number of $f$ is $1$, then $f^*D$ is irreducible for every prime divisor $D$ in $Y$. Assume that the relative Picard number of $f$ is at least $2$. Since $f$ is a Fano fibration and $X$ has $\mathbb{Q}$-factorial klt singularities, we can run an MMP for $X$ over $Y$ by \cite[Cor. 1.3.3]{BCHM10}. We obtain the following sequence of birational maps over $Y$

\centerline{
\xymatrix{
X_0\ar@{-->}[r]  \ar[d]   &X_1  \ar[ld] \ar@{-->}[r]  & \cdots \ar@{-->}[r] & X_k\ar[dlll]^{f_k}   \\
Y
}
}

\noindent such that $X=X_0$ and that $X_k$ is a Mori fiber space. Let $\pi\colon  X_k\to B$ be the Mori fibration. Then there is a natural morphism $p\colon B\to Y$.  Since $f$ has relative dimension $1$, the natural morphism $B\to Y$ is birational.  

First we assume that $p$ contracts a divisor. Then there is a prime divisor $D$ in $Y$ such that $p^*D$ is reducible. Thus $f_k^*D$ is reducible , where $f_k\colon X_k\to Y$ is the natural morphism. Since the birational map $X_k \dashrightarrow X$ does not contract any divisor, we obtain that $f^*D$ is reducible.

Now we assume that  $p \colon B\to Y$ does not contract any divisor. Since $Y$ is $\mathbb{Q}$-factorial, the exceptional locus of $B\to Y$ is either empty or pure of codimension $1$. Hence $B\to Y$ is an isomorphism. As a consequence, the Picard number of  $X_k$ is one more than the one of $Y$.  Since the relative Picard number of $X$ over $Y$ is larger than $1$, there must be some $X_i$ in the  sequence above such that $X_i\to X_{i+1}$ is a divisorial contraction. In particular, if $f_i \colon X_i\to Y$ is the natural fibration, then there is a prime divisor $D$ in $Y$ such that $f_i^*D$ is reducible. Hence $f^*D$ is also reducible.  This completes the proof of the lemma.
\end{proof}

\begin{lemma}
\label{lem-fiber-product-picard-number}
Let $g \colon Y\to Z$ be an equidimensional  fibration between $\mathbb{Q}$-factorial klt projective varieties. Let $r \colon V\to Z$ be a Mori fibration of relative dimension $1$, where $V$ is a $\mathbb{Q}$-factorial klt variety.  Let $X$ be the fiber product $Y\times_{Z} V$. Let $f \colon X\to Y$ and $\pi \colon X\to V$  be the natural fibrations. 

\centerline{
\xymatrix{
X\ar[d]_f \ar[r]^{\pi}  &V \ar[d]^r & \\
Y \ar[r]_g & Z\\
}
}

\noindent Assume that  $X$ is a  $\mathbb{Q}$-factorial klt  Fano variety and that the discriminants of $g$ and $r$ do not have common components. Then the relative Picard number of   $f$ is $1$.
\end{lemma}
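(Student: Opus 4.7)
The plan is to apply Lemma \ref{lem-relative-Pic} to $f$. Since $r$ has relative dimension $1$ (as a smooth Mori fibration with relative Picard number $1$, $r$ is a conic bundle), its base change $f$ is an equidimensional Fano fibration of relative dimension $1$, and $X$ is smooth so has $\mathbb{Q}$-factorial klt singularities. Hence it will suffice to prove that $f^{*}D$ is irreducible for every prime divisor $D\subset Y$. A key preliminary observation is that both $g$ and $r$ are flat, by miracle flatness applied to equidimensional morphisms between smooth varieties; in particular, $f^{*}D = D\times_Z V \to D$ is flat, and since $D$ is irreducible, every irreducible component of $f^{*}D$ dominates $D$. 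So it suffices to show that the generic fiber of $f^{*}D$ over $g(D)\subset Z$ is irreducible.

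I would split into cases based on $g(D)$. Since $g$ is equidimensional, either $g(D)=Z$ or $g(D)=D'$ is a prime divisor of $Z$. If $g(D)=Z$, then the generic fiber $V_\eta$ of $r$ over the generic point of $Z$ is a smooth conic, hence geometrically integral over $k(Z)$; therefore $(f^{*}D)_\eta = D_\eta \times_{k(Z)} V_\eta$ is irreducible, since a fiber product of an irreducible scheme with a geometrically integral scheme over a field is irreducible. If $g(D)=D'\not\subset\Delta_r$, the same mechanism applies: $V_{\eta_{D'}}$ is a smooth conic over $k(D')$, hence geometrically integral, so $(f^{*}D)_{\eta_{D'}} = D_{\eta_{D'}}\times_{k(D')} V_{\eta_{D'}}$ is irreducible.

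The remaining case is $g(D)=D'\subset\Delta_r$, where the needed geometric integrality must come from the other factor. By hypothesis, $D'\not\subset \Delta_g$, so the generic fiber $Y_{\eta_{D'}}$ of $g$ over $D'$ is smooth and geometrically connected (by connectedness of fibers of $g$), hence geometrically integral. As $g^{*}D'$ has $Y_{\eta_{D'}}$ as its generic fiber over $D'$, the irreducibility of this generic fiber forces $g^{*}D'=D$ (with multiplicity one, by smoothness), so $D_{\eta_{D'}}=Y_{\eta_{D'}}$ is geometrically integral. Then $(f^{*}D)_{\eta_{D'}}$ is irreducible by the same product argument.

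The main obstacle is precisely this last case: without the assumption that $\Delta_g$ and $\Delta_r$ share no components, base change by $g$ could split the two lines of a degenerate fiber of $r$ into two distinct components of $f^{*}D$, producing new relative divisor classes and raising the relative Picard number. The discriminant hypothesis is exactly the structural input that rules this out by ensuring that, over each divisor in $Z$ where geometric integrality fails on one side of the Cartesian square, it persists on the other.
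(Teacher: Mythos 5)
Your overall strategy --- verify the criterion of Lemma \ref{lem-relative-Pic} directly by computing the generic fibre of $f^{*}D=D\times_{Z}V$ over the generic point of $g(D)$, and use that a fibre product over a field of an irreducible scheme with a geometrically integral one is irreducible --- is sound, and is essentially a direct, case-by-case rendering of the paper's contradiction argument. But there is one genuine omission, and it sits exactly at the step where the hypothesis that $r$ is a \emph{Mori} fibration has to be used. In your last case, $g(D)=D'\subseteq \Delta_{r}$, you establish that $D_{\eta_{D'}}$ is geometrically integral and then conclude ``by the same product argument''. That argument also needs the \emph{other} factor $V_{\eta_{D'}}$ to be irreducible, and this is not automatic: since $g(D)=D'$ one has $f^{*}D\cong D\times_{D'}r^{-1}(D')$, so if $r^{-1}(D')$ had two components (each dominating $D'$, as it must by equidimensionality of $r$), then $V_{\eta_{D'}}$ and hence $f^{*}D$ would be reducible no matter how nice $D\to D'$ is. The irreducibility of $r^{*}D'$ is precisely the content of $\rho(V/Z)=1$, via Lemma \ref{lem-relative-Pic} applied to $r$; the paper invokes this explicitly (``$R'=r^{-1}(E)$ is irreducible since $r$ is a Mori fibration''), and your proof must too. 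With that one sentence added, Case 3 closes: the monodromy interchanging the two lines of the degenerate fibres over $D'$ (which is what keeps $r^{-1}(D')$ irreducible) survives the base change along $D\to D'$ because the latter has geometrically integral generic fibre.

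A smaller point: your justification that $r$ has relative dimension $1$ (``a smooth Mori fibration with relative Picard number $1$ is a conic bundle'') is false --- a $\p^{2}$-bundle is a Mori fibration. Relative dimension $1$ is genuinely needed (to apply Lemma \ref{lem-relative-Pic} both to $f$ and to $r$), is not among the stated hypotheses, and is also assumed silently in the paper's own proof; it does hold in the one place the lemma is used (Construction \ref{const-exemple}, where $r$ is a double cover of $\p^{1}\times S$ followed by the projection to $S$). So state it as an assumption inherited from the context rather than deriving it from the Mori-fibration property.
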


\begin{proof}
Assume the opposite. Then by Lemma \ref{lem-relative-Pic}, there is a prime divisor $D$ in $Y$ such that $f^*D$ is reducible. Let $R=f^{-1}(D)$ and let $R_1, R_2$  be two different components of $R$. Let $\alpha$ be a general point in $D$. Let $E= g(D)$ and let $\beta = g(\alpha)\in E$. Then $E$ is irreducible.  Since $g$ is equidimensional, either $E$ is a divisor or $E=Z$.  Since $X= Y\times_Z V$, the fiber $C'$ of $r$ over $\beta$ is isomorphic to the fiber $C$ of $f$ over $\alpha$. Thus $C'$ is reducible for $C$ is reducible. Since $\beta$ is a general point in $E$, we obtain that $E$ is a divisor and is contained in the discriminant of $r$.

Let  $R'=r^{-1}(E)$. Then, by Lemma \ref{lem-relative-Pic}, it is irreducible since $r$ is a Mori fibration.  We have the following commutative diagram

\centerline{
\xymatrix{
R\ar[d]_{f|_R} \ar[r]^{\pi|_R}  &R' \ar[d]^{r|_{R'}} & \\
D \ar[r]_{g|_D} & E\\
}
}

Since $X=Y\times _Z V$ and $g$ is equidimensional, the fibration $\pi$ is equidimensional. Since $R'$ is irreducible, we obtain that the  projections $\pi|_{R_1} \colon  R_1\to R'$ and $\pi|_{R_2} \colon  R_2\to R'$ are  surjective. In particular, general fibers of $\pi|_R$ are reducible. 

Let $a$ be a general point of $R'$ and let $b=r(a)\in E$. Since $X=Y\times _Z V$, the fiber of $\pi|_R$ over $a$ is the same as the fiber of $g|_D$ over $b$.  Since the discriminants of $g$ and $r$ do not have common components and $b$ is a general point in $E$, the fiber of $g|D$ over $b$ is irreducible. Hence the fiber of $\pi|_R$ over $a$ is irreducible. This is a contradiction.
\end{proof}

The verification of Construction \ref{exmp-112} is as follows.

\begin{lemma}
\label{lem-verifi-1}
Let $X$ be a variety constructed by the construction above. Assume that the branched loci  are in general positions. Then $X$ is a Fano variety with $\mathrm{Nef}(X)=\mathrm{Psef}(X)$ and $\rho(X)=\mathrm{dim}\, X-1$.
\end{lemma}

\begin{proof}
We will only prove the case of Construction \ref{exmp-112}.2.  The verification for the first case is similar and simpler. From the ramification formula, we see that $X$ is a Fano variety. We will now prove that $\rho(X)=\mathrm{dim}\, X-1$. By \cite[Lem. 3.4]{Dru16}, the Picard number of $X_{r,2}$ is $r+1$.  We write $$M_1=X_{r,2}\times (\p^1)^s = M_1'\times  \p^1 .$$  Then the natural fibration $g_1$ from the   $M_1'$ to $(\p^1)^{r-1}\times \p^2\times (\p^1)^{s-1}$ is a Mori fibration.  Let $M_2$ be as in Construction \ref{exmp-112}.2. We write $M_2=\p^1\times M_2'$. Then, by the same argument, the morphism $g_2$ from $ M_2'$ to   $(\p^1)^{r-1}\times \p^2\times (\p^1)^{s-1}$ is a Mori fibration. We note that $X$ is then  isomorphic to the following fiber product  

\centerline{
\xymatrix{
 X\cong M_1\times_{M'} M'_2 \ar[d] \ar[r]  &M'_2 \ar[d]^{g_2} \\
M_1'    \ar[r]_{g_1} &  *+[r]{M'=(\p^1)^{r-1}\times \p^2\times (\p^1)^{s-1}}
}
}

\noindent  Since the branched loci of the morphisms from $M_1$  and $M_2$ to $(\p^1)^{s}\times \p^2\times (\p^1)^{r}$ are in general positions, we may assume that  the discriminants of $g_1$ and  $g_2$ do not have common components. Moreover, we can assume that  $M_1$,  $M_2$ and $X$ are smooth. By  Lemma \ref{lem-fiber-product-picard-number}, the morphism $X\to M_2'$ also has relative Picard rank one. Therefore,  we have $\rho(X)=n-1$.

Finally, the property $\mathrm{Nef}(X)=\mathrm{Psef}(X)$ follows from the following lemma.
\end{proof}

\begin{lemma}
\label{lem-verification-nef=psef}
Let $f\colon X\to Y$ be a finite surjective morphism between $\mathbb{Q}$-factorial projective varieties. If $\rho(X)=\rho(Y)$ and if $\mathrm{Nef}(Y)=\mathrm{Psef}(Y)$, then $\mathrm{Nef}(X)=\mathrm{Psef}(X)$.
\end{lemma}

\begin{proof}
Since $\rho(X)=\rho(Y)$ and since $f$ is finite surjective, the pullback $f^* \colon \mathrm{N}^1(Y) \to \mathrm{N}^1(X) $ is an isomorphism. 
Thus if $D_X$ is a pseudoeffective $\mathbb{Q}$-divisor in $X$, then there is a $\mathbb{Q}$-divisor $D_Y$ in $Y$ such that $f^*D_Y \equiv D_X$. By the projection formula, $D_Y$ must be pseudoeffective. Thus it is nef, and then so is $D_X$. This completes the proof of the lemma.
\end{proof}

\begin{const}
\label{exmp-122} 
Let $B_1\cong B_2\cong \p^2$. We will give three examples of Fano varieties $X$ of dimension $n\geqslant 3$ which are  finite covers of the product $(\p^1)^{n-3}\times  W$, of degree $2$ or $4$, where $W$ is a normal ample hypersurface in $B_1\times B_2$.

\begin{enumerate}
\item[1.] Let $W$ be a normal divisor of degree $(1,1)$ in $B_1\times B_2$. Let $X$ be a double cover of $(\p^1)^{n-3} \times W$,  branched along a  prime   divisor $D$, which is  equivalent to the pullback of some prime divisor of degree $(2,...,2,2,2)$ \textit{via} $(\p^1)^{n-3} \times W \to (\p^1)^{n-3} \times  B_1\times B_2$.

\item[2.] Assume that $n\geqslant 4$. Let $ Y_{n-3,2} $ be a double cover of $(\p^1)^{n-3}\times B_1$, branched along a   prime divisor $D_1$ of degree $(2,..,2,2)$. Let $Y=Y_{n-3,2}\times B_2$ and let $p$ be the natural projection from $Y$ to   $B_1\times B_2$. Let $W$ be a normal hypersurface of degree $(1,k)$ in $B_1\times B_2$, where $k\in\{1,2\}$. Let $X=p^*W$.   Equivalently, $X$ is isomorphic to  the following fiber product 

\centerline{
\xymatrix{
X \cong ((\p^1)^{n-3} \times W)\times_S Y\ar[d] \ar[r]  &*+[r]{Y=Y_{n-3,2}\times B_2} \ar[d]^{g} \\
(\p^1)^{n-3} \times W \ar[r]_{j} & *+[r]{S= (\p^1)^{n-3}\times B_1\times B_2}
}
}
\noindent where $j$ is the natural injection and $g$ is a double cover branched along a prime divisor $D_1\times B_2$ of degree $(2,...,2,2,0)$.

\item[3.] Assume that $n\geqslant 5$. Let   $r,s>0$ be two integers such that $r+s=n-3$. Let $Y_{r,2}$ be a double cover of $(\p^1)^{r}\times  B_1$, branched along a   prime divisor $D_1$ of degree $(2,...,2,2)$. Let $Y_{s,2}$ be a double cover of $(\p^1)^{s} \times B_2$, branched along a  prime divisor of degree $(2,...,2,2)$. Let $Y=Y_{r,2}\times Y_{s,2}$ and let $p$ be the projection  from $Y$  to $B_1\times B_2$. Let $W$ be a normal hypersurface of degree $(1,1)$ in $B_1\times B_2$. We set $X=p^*W$.  Equivalently, $X$ is isomorphic to  the following fiber product 

\centerline{
\xymatrix{
X\cong ((\p^1)^{r} \times W \times (\p^1)^{s})\times_S Y\ar[d] \ar[r]  & *+[r]{Y=Y_{r,2}\times Y_{s,2}} \ar[d]\\
(\p^1)^{r} \times W \times (\p^1)^{s} \ar[r]_{} & *+[r]{S= (\p^1)^{r}\times B_1\times B_2 \times (\p^1)^{s}}
}
}

\end{enumerate}
\end{const}

The following lemma shows that the constructions above can produce Fano varieties $X$ which satisfy  $\mathrm{Nef}(X)=\mathrm{Psef}(X)$ and $\rho(X)=\mathrm{dim}\, X-1$.

\begin{lemma}
\label{lem-verifi-2}
Let  $X$ be a variety  obtained by  Construction \ref{exmp-122}. Assume that $W$ is smooth, and that the branch loci of the finite morphisms are in general positions. Then $X$ is a Fano variety with $\mathrm{Nef}(X)=\mathrm{Psef}(X)$ and $\rho(X)=\mathrm{dim}\, X-1$. 
\end{lemma}

\begin{proof}
We will only prove the case of Construction \ref{exmp-122}.3 The verification for the other cases is similar and simpler. From the ramification formula, we see that $X$ is a Fano variety. We will now prove that $\rho(X)=\mathrm{dim}\, X-1$.  With the notation of Construction \ref{exmp-122}.3,  we let $X_{r,W}$ be the following fiber product 

\centerline{
\xymatrix{
X_{r,W}\cong W \times_{B_1}  Y_{r,2} \ar[d] \ar[r]  & *+[r]{ Y_{r,2} } \ar[d]_{g_2}\\
  W \ar[r]_{g_1}  & *+[r]{ B_1 }
}
}

\noindent By Lefschetz theorem (see \cite[Example 3.1.25]{Laz04}), $W$ has Picard number $2$, and hence $W\to B_1$ is a Mori fibration. We may assume that the discriminants  of $g_1$ and $g_2$ do not have common components, and that $Y_{r,2}$ and $X_{r,W}$ are smooth.   Thus by  Lemma \ref{lem-fiber-product-picard-number}, the morphism $X_{r,W}\to Y_{2,r} $ also has relative Picard rank one. Moreover, by \cite[Lem. 3.4]{Dru16}, $\rho(Y_{r,2})=r+1$. Therefore, we have $\rho(X_{r,W})=r+2$. Indeed, we also note that $X_{r,W}$ is a double cover of $(\p^1)^{r}\times W$, branched along a prime divisor which is the pullback of a divisor of degree $(2,...,2,2,0)$ \textit{via} the morphism $(\p^1)^r\times W\to (\p^1)^r\times B_1\times B_2.$

Similarly, we let $X_{W,s}$ be the following product. 

\centerline{
\xymatrix{
X_{W,s}\cong W \times_{B_2}   Y_{s,2}  \ar[d] \ar[r]  & *+[r]{  Y_{s,2}} \ar[d]_{ }\\
  W \ar[r]_{ }  & *+[r]{   B_2}
}
}

\noindent Then $X_{W,s}$ is smooth with Picard number $s+2$.  It is in fact isomorphic to a double cover of $W\times (\p^1)^{s}$, branched along a prime divisor which is the pullback of a divisor of degree $(0,2,2,...,2 )$ \textit{via} the injection $ W \times  (\p^1)^{s} \to    B_1\times B_2 \times  (\p^1)^{s}.$ We remark that $X$ is then isomorphic to the following fiber product 

\centerline{
\xymatrix{
X \cong Z_1\times_{R} Z_2 \ar[d] \ar[r]  & *+[r]{Z_2= (\p^1)^{ {r-1} } \times X_{W,s}} \ar[d]_{ }\\
  Z_1=X_{r,W}\times (\p^1)^{s-1}  \ar[r]_{ } & *+[r]{R= (\p^1)^{r-1}\times W \times (\p^1)^{s-1} }
}
}

\noindent Both of the morphisms $Z_1\to R$ and $Z_2\to R$ above  are Mori fibrations. We may assume that their discriminants do not have common components. We may also assume further that $X$ is smooth. Then  by  using Lemma \ref{lem-fiber-product-picard-number} again, we obtain that $$\rho(X)=r+2+s=\mathrm{dim}\, X-1.$$ By  Lemma \ref{lem-verification-nef=psef}, we have   $\mathrm{Nef}(X)=\mathrm{Psef}(X)$. This completes the proof of the lemma.
\end{proof}


\section{Fibrations on  varieties in Theorem \ref{thm-classification}}

Let $X$ be a $\mathbb{Q}$-factorial klt Fano variety. Then for every face $V$ of $\NE(X)$, there is a  fibration $f \colon X\to Y$ which contracts exactly the curves whose classes are in $V$. Conversely, if $f \colon X\to Y$ is a  fibration, then the curves in the fibers of $f$ generates a face $V$ of $\NE(X)$ (See for example \cite[\S 3-2]{KMM87}). 
 
 \begin{lemma}
\label{lem-base-fano}
Let $X$ be a  Fano variety  with $\mathbb{Q}$-factorial klt (resp. locally factorial canonical) singularities  such that $\mathrm{Nef}(X)=\mathrm{Psef}(X)$. Let $f \colon X\to Y$ be any  fibration. Then $Y$ is also a  Fano variety with $\mathbb{Q}$-factorial klt (resp. locally factorial canonical) singularities  such that $\mathrm{Nef}(Y)=\mathrm{Psef}(Y)$. 
\end{lemma}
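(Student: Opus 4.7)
The plan is to verify each conclusion about $Y$ in turn, starting with the singularity type (needed to make sense of intersection theory on $Y$), then the cone equality $\mathrm{Nef}(Y)=\mathrm{Psef}(Y)$, and finally the Fano property.

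Since $X$ is $\mathbb{Q}$-factorial klt Fano, the cone theorem presents $f$ as the contraction of a face $V\subseteq \NE(X)$, which factors as a composition of contractions of $K_{X}$-negative elementary extremal rays. Standard descent of $\mathbb{Q}$-factoriality and klt singularities along such contractions (see \cite[Cor.~3.17, Prop.~3.36]{KM98}) handles the first case. In the locally factorial canonical setting, local factoriality does not descend in general; to bypass this I would exploit $\mathrm{Nef}(X)=\mathrm{Psef}(X)$ together with Lemma~\ref{lem-relative-Pic} to constrain each elementary contraction to be an equidimensional fibration with irreducible fibres in codimension $1$, a setting in which local factoriality and canonical singularities both descend.

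With $Y$ now $\mathbb{Q}$-factorial, the inclusion $\mathrm{Nef}(Y)\subseteq \mathrm{Psef}(Y)$ is automatic. For the reverse, let $D$ be a pseudo-effective $\mathbb{R}$-divisor on $Y$; its pullback $f^{*}D$ is pseudo-effective on $X$ and hence nef by hypothesis. For any irreducible curve $C\subseteq Y$, pick an irreducible curve $\widetilde{C}\subseteq X$ dominating $C$ with $f_{*}[\widetilde{C}]=d\,[C]$, $d>0$. The projection formula gives $D\cdot C = d^{-1}\,f^{*}D\cdot \widetilde{C}\ge 0$, so $D$ is nef.

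Finally, since $\mathrm{Nef}(Y)=\mathrm{Psef}(Y)$, the ample cone on $Y$ coincides with the interior of $\mathrm{Psef}(Y)$, so it suffices to show $-K_{Y}$ is big. My plan is to apply a Fujino--Gongyo type theorem on Fano images, producing an effective boundary $\Delta_{Y}\ge 0$ on $Y$ making $(Y,\Delta_{Y})$ a klt log Fano pair, \ie $-(K_{Y}+\Delta_{Y})$ is $\mathbb{Q}$-ample. Since $\mathrm{Nef}(Y)=\mathrm{Psef}(Y)$, the effective divisor $\Delta_{Y}$ is nef, so $-K_{Y}=-(K_{Y}+\Delta_{Y})+\Delta_{Y}$ is a sum of an ample and a nef $\mathbb{Q}$-divisor, hence $\mathbb{Q}$-ample. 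The main obstacle is this last step: pullback by a fibration with positive-dimensional fibres does not preserve bigness, so $-K_{X}$ being ample does not transfer directly to $-K_{Y}$; the transfer must be mediated by the effective discriminant $\Delta_{Y}$, and the role of $\mathrm{Nef}(Y)=\mathrm{Psef}(Y)$ in promoting the resulting log Fano structure to a genuine Fano structure is the delicate point.
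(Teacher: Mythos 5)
Your proposal follows essentially the same route as the paper: descend the singularities, transfer $\mathrm{Nef}=\mathrm{Psef}$ by the projection formula applied to $f^{*}D$, and then combine a Fujino--Gongyo-type statement (the paper cites \cite[Thm.~3.1]{FG12} to get $-K_Y$ big) with $\mathrm{Nef}(Y)=\mathrm{Psef}(Y)$ to conclude $-K_Y$ is ample; the paper simply outsources the first two steps to \cite[Lem.~4.2]{Dru14} and \cite[Cor.~4.8]{Dru14}. The one sub-step where your justification is off is the locally factorial case: Lemma~\ref{lem-relative-Pic} only applies to equidimensional Fano fibrations of relative dimension~$1$, whereas the elementary contractions occurring in the factorization of $f$ are Mori fibrations of arbitrary relative dimension (and need not be equidimensional); the descent of local factoriality and canonical singularities along such a Mori fibration is exactly the content of \cite[Cor.~4.8]{Dru14} (cf.\ \cite[Lem.~4.6]{Dru14}), not of Lemma~\ref{lem-relative-Pic}.
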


\begin{proof}
By \cite[Lem. 4.2]{Dru16}, we have $\mathrm{Nef}(Y)=\mathrm{Psef}(Y)$. If $X$ has locally factorial canonical  singularities, then so has $Y$ by \cite[Cor. 4.8]{Dru16}. If $X$ has $\mathbb{Q}$-factorial klt  singularities, then the same argument of the proof of \cite[Cor. 4.8]{Dru16} shows that $Y$ also has $\mathbb{Q}$-factorial klt  singularities. By \cite[Thm. 3.1]{FG12},   $-K_Y$ is big.  Since  $\mathrm{Nef}(Y)=\mathrm{Psef}(Y)$, this implies that  $-K_Y$ is ample.
\end{proof}

The objective of this section is to prove the following two  lemmas.

\begin{lemma}
\label{lem-base-*}
Let $X$ be a Fano variety  with locally factorial canonical singularities such that $X$ is smooth in codimension $2$, $\rho(X) = \mathrm{dim}\, X-1$ and  $\mathrm{Nef}(X) = \mathrm{Psef}(X)$. Assume that there is no fibration from $X$ to $\p^1$. Let $f \colon X\to Y$ be a  fibration. Then $Y$ is  also a Fano variety with locally factorial canonical singularities such that $Y$ is smooth in codimension $2$, $\rho(Y) = \mathrm{dim}\, Y-1$ and  $\mathrm{Nef}(Y) = \mathrm{Psef}(Y)$.
\end{lemma}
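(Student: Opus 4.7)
By Lemma \ref{lem-base-fano}, $Y$ is a Fano variety with locally factorial canonical singularities and $\mathrm{Nef}(Y)=\mathrm{Psef}(Y)$, so the substantive points to verify are that $Y$ is smooth in codimension $2$ and that $\rho(Y)=\mathrm{dim}\, Y-1$. Note first that $Y$ also admits no fibration to $\p^1$, for such a fibration composed with $f$ would produce a forbidden $X\to\p^1$.

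For the Picard number, I would prove $\mathrm{dim}\, Y-1\leqslant \rho(Y)\leqslant \mathrm{dim}\, Y-1$. The bound $\rho(Y)\leqslant \mathrm{dim}\, Y$ comes from \cite[Lem.~4.9]{Dru14}. The equality $\rho(Y)=\mathrm{dim}\, Y$ is then ruled out using Theorem \ref{thm-double-cover-p1}: it would force $Y\cong Y_1\times\cdots\times Y_k$ with each $Y_i$ either $\p^1$ or a double cover of a product of $\p^1$'s; each factor admits a fibration to $\p^1$ (for the double covers, composing the cover with projection to a $\p^1$-factor has connected fibers since the branch divisor of degree $(2,\ldots,2)$ restricts to a non-empty divisor on each $(\p^1)^{m-1}$), hence so does $Y$, contradiction. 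For the reverse inequality, let $F$ be a general fiber of $f$: it is a normal Fano variety since $-K_F=-K_X|_F$ is ample. I would argue that $\mathrm{Nef}(F)=\mathrm{Psef}(F)$ by using that any effective class on $F$ lifts, at least numerically, to an effective class on $X$, which is nef by our hypothesis; hence by \cite[Lem.~4.9]{Dru14} applied to $F$ we get $\rho(F)\leqslant \mathrm{dim}\, F=\mathrm{dim}\, f$. Since the restriction map $\mathrm{N}^1(X)/f^*\mathrm{N}^1(Y)\to \mathrm{N}^1(F)$ is injective (N\'eron--Severi comparison for fibrations with general fiber), we obtain $\rho(X)-\rho(Y)\leqslant \rho(F)\leqslant \mathrm{dim}\, f$, i.e.\ $\rho(Y)\geqslant \mathrm{dim}\, Y-1$.

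For smoothness in codimension $2$, I would first show $f$ is equidimensional. Suppose some prime divisor $D\subseteq X$ satisfies $\mathrm{codim}_Y f(D)\geqslant 2$, so $f_*D=0$ as a Weil divisor class. Then $D$ is effective, hence nef. Under $\mathrm{Nef}(X)=\mathrm{Psef}(X)$ every divisorial extremal contraction is excluded (its exceptional divisor would be effective but not nef), and a similar analysis should exclude small contractions, so every extremal ray $R$ of the face $V\subseteq\NE(X)$ cut out by $f$ is of fiber type and is covered by a family of rational curves meeting a general point of $X$. For such a general curve $C\in R$ not meeting $D$, we have $D\cdot C=0$, and by numerical equivalence this holds throughout $R$, hence $D\cdot V=0$. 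Therefore $D\equiv f^*E$ for some $E\in \mathrm{N}^1(Y)$; applying $f_*$ and the projection formula gives $E\equiv f_*D=0$, so $D\equiv 0$, contradicting positivity of $D\cdot A^{n-1}$ for an ample class $A$. Hence $f$ is equidimensional. Combined with the Cohen--Macaulay property of $X$ and $Y$, this gives flatness of $f$ (by a version of miracle flatness for CM source and target after checking fibers are CM), and then by ascent of regularity along flat local homomorphisms, any smooth point of $X$ maps to a smooth point of $Y$. Thus $f^{-1}(Y_{\mathrm{sing}})\subseteq X_{\mathrm{sing}}$, and by equidimensionality $\mathrm{dim}\, Y_{\mathrm{sing}}+\mathrm{dim}\, f=\mathrm{dim}\, f^{-1}(Y_{\mathrm{sing}})\leqslant \mathrm{dim}\, X-3$, giving $Y$ smooth in codimension $2$.

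The main obstacles are the two technical claims: establishing $\mathrm{Nef}(F)=\mathrm{Psef}(F)$ on a general fiber, which requires an explicit extension argument for effective numerical classes on $F$ to effective classes on $X$ using the Fano condition; and deducing flatness of $f$ from equidimensionality over the possibly singular Cohen--Macaulay base $Y$, which needs the fibers of $f$ to be Cohen--Macaulay as an intermediate step. Both rely on the subtle interaction between the effective cone and the face $V\subseteq\NE(X)$ associated to $f$, rather than being formal consequences of the lemmas already in place.
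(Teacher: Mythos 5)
Your reduction to Lemma \ref{lem-base-fano} and your observation that $Y$ admits no fibration to $\p^1$ are fine, and ruling out $\rho(Y)=\dim Y$ is correct (though Lemma \ref{lem-rho=dim-to-p1} does this directly, without invoking the classification of Theorem \ref{thm-double-cover-p1}). But the two claims you yourself flag as ``obstacles'' are genuine gaps, not technical details. First, the inequality $\rho(X)-\rho(Y)\le\rho(F)$ rests on injectivity of $\N^1(X)/f^*\N^1(Y)\to\N^1(F)$, which is false for fibrations in general: a component of a reducible special fibre restricts to zero on the general fibre without being a pullback (this is exactly the phenomenon governed by Lemma \ref{lem-relative-Pic}). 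Moreover $\Nef(F)=\Pef(F)$ is not established: an effective class on $F$ has no reason to extend, even numerically, to an effective class on $X$, and restriction of nef classes only gives $\Nef(X)|_F\subseteq\Nef(F)$, which is the wrong direction. Second, your smoothness-in-codimension-$2$ argument needs $f$ flat, but miracle flatness requires the \emph{base} to be regular; over the merely Cohen--Macaulay, possibly singular $Y$ it fails, and assuming regularity of $Y$ at the relevant codimension-$2$ point is circular, since that is precisely what you are trying to prove. (Your equidimensionality sketch also needs, for each extremal ray of the contracted face, a contracted curve through a general point that \emph{avoids} the divisor $D$; a curve in a positive-dimensional fibre through a point off $D$ may still meet $D$.)

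The paper avoids all of this by a different decomposition: it inducts on the relative Picard number of $f$, factoring $f$ through a chain of Mori fibrations via \cite[Lem.~4.4]{Dru14}. The key input is Lemma \ref{lem-equidimensional}: in the absence of a fibration to $\p^1$, every Mori fibration from $X$ is equidimensional of relative dimension $1$ (proved by induction on $\dim X$ using the face structure of $\NE(X)$). Then $\rho(Y)=\rho(X)-1=\dim Y-1$ is immediate at each step, and smoothness in codimension $2$ descends via Lemma \ref{lem-Mori-fibration-relative-1}: relative dimension $1$ makes $|-K_X|$ relatively basepoint-free (Andreatta--Wi\'sniewski), a general anticanonical member gives a finite double section $Z\to Y$ with $Z$ smooth in codimension $2$, and Lemma \ref{lem-double-cover} transfers this to $Y$. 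If you want to salvage your scheme, you should replace both the fibre-restriction count and the flatness argument with this reduction to relative-dimension-$1$ Mori fibrations.
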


\begin{lemma}
\label{lem-base-*-p1}
Let $X$ be a Fano variety with locally factorial canonical singularities such that $X$ is smooth in codimension $2$, $\rho(X) = \mathrm{dim}\, X-1$ and  $\mathrm{Nef}(X) = \mathrm{Psef}(X)$. Assume that there is a  fibration $f_1 \colon X\to W$ and a projective  morphism $f_2 \colon X\to (\p^1)^r$   such that the product $f_1\times f_2 \colon X\to W\times (\p^1)^r$ is a finite surjective morphism. Then $W$ is  also a   Fano variety with locally factorial canonical singularities such that $W$ is smooth in codimension  $2$, $\rho(W) = \mathrm{dim}\, W-1$ and  $\mathrm{Nef}(W) = \mathrm{Psef}(W)$.
\end{lemma}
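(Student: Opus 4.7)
\textit{Proof plan.} The plan is to verify each of the five required properties of $W$ in turn. By Lemma \ref{lem-base-fano} applied to the fibration $f_1\colon X\to W$, the variety $W$ is Fano with locally factorial canonical singularities and satisfies $\Nef(W)=\Pef(W)$. It remains to establish that $W$ is smooth in codimension two and that $\rho(W)=\dim W-1$.

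Set $g := f_1\times f_2\colon X\to Y := W\times (\p^1)^r$; since $g$ is finite and surjective, $\dim W = \dim X - r$. For the upper bound on $\rho(W)$, denote by $\pi_i$ the $i$-th projection of $(\p^1)^r$ and set $h_i := (\pi_i\circ f_2)^\ast \cO_{\p^1}(1)$ for $i=1,\dots,r$. A general fiber $F$ of $f_1$ maps finitely and surjectively onto $(\p^1)^r$ via $f_2|_F$, so the restrictions $h_1|_F,\dots,h_r|_F$ are linearly independent in $\N^1(F)$; hence $h_1,\dots,h_r$ are independent modulo $f_1^\ast \N^1(W)$, giving $\rho(X/W)\ge r$ and thus
\[
\rho(W) \le \rho(X) - r = \dim W - 1.
\]
The reverse inequality $\rho(W)\ge \dim W-1$ is the main obstacle. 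The plan is to combine Druel's bound $\rho(W)\le \dim W$ from \cite[Lem. 4.9]{Dru14} with an analysis of the Mori cone: the hypotheses $\Nef(X)=\Pef(X)$ and $\rho(X)=\dim X-1$ constrain $\NE(X)$ enough that the relative cone $\NE(X/W)$ is forced to be exactly $r$-dimensional, using the $h_i$ together with the finiteness of $g$ to produce $r$ independent extremal directions, after which the remaining extremal rays descend to $\NE(W)$ via $f_1$ to give the count.

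Finally, for $W$ smooth in codimension two, I argue by contradiction. Suppose $\text{Sing}(W)$ has an irreducible component $S$ of codimension two, so that $Z := S\times (\p^1)^r\subseteq \text{Sing}(Y)$ is of codimension two in $Y$. Let $B\subset Y$ be the branch locus of $g$, which is pure of codimension one by Zariski--Nagata purity. If $Z\not\subset B$, then at a general point $y$ of $Z\setminus B$ the morphism $g$ is \'etale onto smooth points of $X$ (using smoothness of $X$ in codimension two), and \'etale descent forces $Y$ smooth at $y$, a contradiction. In the remaining case $Z\subset B$, a dimension count on $W\times (\p^1)^r$ shows that any irreducible component of $B$ containing $Z$ must be of the form $D\times (\p^1)^r$ with $D\supset S$ a divisor of $W$; one then derives a contradiction from a local analysis of the resulting ramified cover using local factoriality of $X$ and the explicit structure of locally factorial canonical codimension-two singularities (of $E_8$ type after slicing), which cannot be smoothed out by a cover whose total space is locally factorial and smooth in codimension two.
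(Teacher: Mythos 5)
Your proposal correctly disposes of the easy parts: Lemma \ref{lem-base-fano} gives that $W$ is Fano, locally factorial canonical, with $\Nef(W)=\Pef(W)$, and your restriction-to-a-general-fiber argument does prove $\rho(X/W)\geqslant r$, hence $\rho(W)\leqslant \dim W-1$. But the two remaining assertions are exactly the non-trivial content of the lemma, and for both you offer only a ``plan,'' not a proof. For the reverse inequality $\rho(W)\geqslant \dim W-1$ you must show $\rho(X/W)\leqslant r$, i.e.\ that $\N^1(X)$ is spanned by $f_1^*\N^1(W)$ together with the $h_i$; nothing in your sketch explains why the relative cone cannot have more than $r$ extremal rays (e.g.\ why $f_1$ cannot have reducible fibers over a divisor of $W$, which would contribute extra classes). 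The paper's route is an induction on $r$ via the Stein factorisation $X\to Y\to W\times(\p^1)^{r-1}$, reducing to $r=1$, where $f_1$ is equidimensional of relative dimension $1$ and Lemma \ref{lem-relative-1-Mori} (which rests on running an MMP over $W$, \cite[Lem.~4.4]{Dru14}) forces $f_1$ to be a Mori fibration, so that $\rho$ drops by exactly one at each stage. Some such input is indispensable and is absent from your argument.

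The smoothness-in-codimension-$2$ sketch also has concrete problems. You invoke Zariski--Nagata purity for $g:X\to Y=W\times(\p^1)^r$ precisely along the locus where $Y$ is singular, but purity of the branch locus requires the target to be regular (the paper itself is careful to apply \cite[Prop.~2]{Zar58} only over the smooth locus); your case $Z\not\subset B$ survives because you only use \'etale descent there, but the case split itself is not legitimate as stated. In the case $Z\subset B$, the claim that a component of $B$ containing $S\times(\p^1)^r$ must be a product $D\times(\p^1)^r$ is unjustified (a divisor of $W\times(\p^1)^r$ containing $S\times(\p^1)^r$ need not be a product), and the concluding ``local analysis'' of $E_8$-type slices is not carried out at all --- it is not obvious that a finite cover with locally factorial total space smooth in codimension $2$ cannot lie over such a point. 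The paper avoids all of this: in the $r=1$ case it takes a general member $Z\in|-K_X|$, which is a degree-$2$ multisection of the conic fibration $f_1$, is smooth in codimension $2$ because $X$ is, and maps finitely of degree $2$ onto $W$; Lemma \ref{lem-double-cover} then descends smoothness in codimension $2$ to $W$ (Lemma \ref{lem-Mori-fibration-relative-1}). As it stands your proposal has genuine gaps at both of these steps.
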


We will first prove some preliminary results.

\begin{lemma}
\label{lem-relative-1-Mori}
Let $X$ be a $\mathbb{Q}$-factorial klt Fano variety such that $\mathrm{Nef}(X)=\mathrm{Psef}(X)$. Let $f \colon X\to Y$ be a  fibration. If the relative dimension of $f$ is $1$, then $f$ is a Mori fibration.
\end{lemma}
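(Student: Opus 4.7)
Since $-K_X$ is ample, it is in particular $f$-ample, so the content of the lemma is to establish that the relative Picard number $\rho(X/Y)$ equals $1$---equivalently, that the $K_X$-negative face $V\subseteq\NE(X)$ contracted by $f$ is one-dimensional.

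The crux of my approach is the following elementary observation, which I would prove first and then apply twice: on any projective variety $W$ with $\mathrm{Nef}(W)=\mathrm{Psef}(W)$, every big and nef $\mathbb{R}$-divisor $D$ is in fact ample. Indeed, fixing any ample $A$ on $W$ and $\epsilon>0$ small enough that $D-\epsilon A$ remains big, $D-\epsilon A$ is pseudo-effective, hence nef by hypothesis, so $D=(D-\epsilon A)+\epsilon A$ is nef plus ample and therefore ample. A direct corollary is that $W$ admits no non-trivial birational contraction, since the pull-back of an ample class by such a contraction would be big and nef but not ample.

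With this in hand, I pick any extremal ray $R\subseteq V$---such a ray exists, as $V$ is a non-trivial rational polyhedral face of $\NE(X)$ (containing the class of a general fiber of $f$). The extremal contraction $\phi_R: X\to X'$ is neither divisorial (its exceptional divisor would be effective, hence nef, yet intersects the contracted curves negatively) nor small (by the corollary above), so $\phi_R$ is a Mori fibration with $\rho(X/X')=1$. Since $R\subseteq V$, $f$ factors as $f=g\circ \phi_R$ for some $g:X'\to Y$; comparing relative dimensions forces the relative dimension of $\phi_R$ to be $1$ and that of $g$ to be $0$. The connected fibers of both $f$ and $\phi_R$ propagate to $g$, and a generically finite morphism with connected fibers has degree one, so $g$ is birational.

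Now $X'$ again satisfies the hypotheses of the present lemma by Lemma \ref{lem-base-fano}, so the corollary applied to $X'$ rules out $g$ contracting any curve; therefore $g$ is finite, and being birational onto the normal variety $Y$, it must be an isomorphism. Thus $\rho(X/Y)=\rho(X/X')=1$. The main obstacle I anticipate is the exclusion of small extremal contractions, since the standard $\mathrm{Nef}=\mathrm{Psef}$ argument handles only divisorial contractions on its own; the sharper ``big and nef implies ample'' statement is the decisive additional input coming from the hypothesis $\mathrm{Nef}(X)=\mathrm{Psef}(X)$, and once it is in place, the rest of the argument is a routine tracking of relative dimensions and Picard numbers.
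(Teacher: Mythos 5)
Your proof is correct and follows essentially the same route as the paper: factor $f$ through a Mori fibration $X\to X'$ and use $\mathrm{Nef}=\mathrm{Psef}$ to show the residual birational morphism $X'\to Y$ is an isomorphism. The only difference is that you prove from scratch (via the ``big and nef implies ample'' observation and the exclusion of divisorial and small contractions) the two facts that the paper simply cites from Druel's Lemma 4.4, namely the existence of a Mori fibration over $Y$ and the non-existence of non-trivial birational contractions on a variety with $\mathrm{Nef}=\mathrm{Psef}$.
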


\begin{proof}
There is a Mori fibration $X\to Z$ over $Y$ by \cite[Lem. 4.4]{Dru16}. The variety $Z$ is a $\mathbb{Q}$-factorial klt Fano variety such that $\mathrm{Nef}(Z)=\mathrm{Psef}(Z)$ by Lemma \ref{lem-base-fano}. Since $f$ has relative dimension $1$, the natural fibration $Z\to Y$ is birational. Since $\mathrm{Nef}(Z)=\mathrm{Psef}(Z)$, we have  $Z\cong Y$ by \cite[Lem. 4.4]{Dru16}. This implies that $f$ is a Mori fibration.
\end{proof}

\begin{lemma}
\label{lem-Mori-fibration-relative-1}
Let $f \colon X\to Y$ be a Mori fibration such that $X$ has locally factorial canonical singularities. If $X$ is smooth in codimension $k$ and $f$ is equidimensional of relative dimension $1$, then $Y$ is also smooth in codimension $k$.  
\end{lemma}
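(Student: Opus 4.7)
The plan is to reduce the statement to the containment $f^{-1}(Y_{\textup{sing}})\subseteq X_{\textup{sing}}$ and then to conclude by equidimensionality. Since $X$ has locally factorial canonical singularities, $X$ is Cohen--Macaulay (canonical singularities in characteristic zero are rational, hence Cohen--Macaulay) and Gorenstein ($K_X$ is Cartier because $X$ is locally factorial); the base $Y$ is automatically normal.

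The crucial first step will be to prove that $f$ is flat. Since $X$ is Cohen--Macaulay and $f$ is equidimensional, miracle flatness gives that $f$ is flat over the smooth locus $Y^{\textup{reg}}$. To extend flatness to all of $Y$, I would exploit the Mori structure of $f$: the numerical equivalence of fibers forces $(-K_X)\cdot F_y=2$ for every $y$, while $f_*\mathcal{O}_X=\mathcal{O}_Y$ together with $R^1 f_*\mathcal{O}_X=0$ (Kawamata--Viehweg vanishing applied to the Fano fibration $f$) yield, via cohomology and base change, $\chi(\mathcal{O}_{F_y})=1$ for every $y$. Consequently the Hilbert polynomial $P_y(t)=\chi(F_y,\mathcal{O}_{F_y}(-tK_X))=2t+1$ does not depend on $y$, and the standard criterion that constancy of the Hilbert polynomial over a reduced base is equivalent to flatness implies that $f$ is flat.

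Once flatness is in hand, the desired containment $f^{-1}(Y_{\textup{sing}})\subseteq X_{\textup{sing}}$ reduces to the classical commutative-algebra fact that if $A\to B$ is a flat local homomorphism of Noetherian local rings and $B$ is regular, then $A$ is regular; applied to $A=\mathcal{O}_{Y,f(x)}$ and $B=\mathcal{O}_{X,x}$, this shows that a smooth point of $X$ must sit over a smooth point of $Y$. The equidimensionality of $f$ then gives $\textup{codim}_X f^{-1}(S)=\textup{codim}_Y S$ for every closed $S\subseteq Y$, and taking $S=Y_{\textup{sing}}$ we obtain
\[
\textup{codim}_Y Y_{\textup{sing}}=\textup{codim}_X f^{-1}(Y_{\textup{sing}})\geq \textup{codim}_X X_{\textup{sing}}\geq k+1,
\]
so $Y$ is smooth in codimension $k$.

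The hard part will be the global flatness of $f$: one has to upgrade the miracle flatness available a priori only over $Y^{\textup{reg}}$ to flatness on all of $Y$, which requires using the numerical rigidity of the fibers of the Mori fibration to control the arithmetic genus of every single fiber.
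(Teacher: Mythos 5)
Your reduction of the lemma to the flatness of $f$ is sound: descent of regularity along a flat local homomorphism (if $\sO_{Y,f(x)}\to\sO_{X,x}$ is flat local and $\sO_{X,x}$ is regular, so is $\sO_{Y,f(x)}$) and the final codimension count via equidimensionality are both correct. The problem is the flatness step itself, which you rightly flag as the hard part but whose proposed proof is circular. The cohomology-and-base-change theorems you invoke (\cite[Thm. III.12.9, III.12.11]{Har77}) all \emph{presuppose} that the sheaf is flat over the base, so they cannot be used to compute $h^0(\sO_{X_y})$ and $h^1(\sO_{X_y})$ at a point $y$ where flatness is not yet known — i.e. exactly at the singular points of $Y$, which are the only points at issue. (The vanishing $h^1(\sO_{X_y})=0$ can be rescued by the theorem on formal functions, since $H^1$ is top-degree on the one-dimensional fibers, but $h^0(\sO_{X_y})=1$ does not follow this way.) Likewise, $(-K_X)\cdot[X_y]=2$ for \emph{every} $y$ is not a formal consequence of ``numerical equivalence of fibers'': for a non-flat morphism the fiber cycles need not all be numerically equivalent, and proving that they are here is essentially the degree-one coefficient of the Hilbert-polynomial constancy you are trying to establish (it can be done by slicing $Y$ with general hyperplanes through $y$ and applying miracle flatness over the resulting smooth curve, but that argument must be supplied). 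Bear in mind that ``equidimensional with Cohen--Macaulay source and normal Cohen--Macaulay base'' does not imply flat — e.g. $\mathbb{A}^2\to\mathbb{A}^2/\{\pm 1\}$ — so the flatness really has to be extracted from the conic-bundle structure, and as written your argument does not deliver it.

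For comparison, the paper sidesteps flatness entirely. By the main theorem of \cite{AW93} the system $|-K_X|$ is relatively basepoint-free, so a general member $Z\in|-K_X|$ is smooth in codimension $k$ by Bertini and, after shrinking $Y$ around a given point, $f|_Z:Z\to Y$ is finite surjective of degree $2$ (since $-K_X$ has degree $2$ on the fibers and $f$ is equidimensional). As $Y$ is Cohen--Macaulay and locally factorial, Lemma \ref{lem-double-cover} applies to this double cover and transfers smoothness in codimension $k$ from $Z$ down to $Y$. If you want to keep your strategy, you must replace the base-change step by an argument that actually establishes flatness over the singular points of $Y$; otherwise the anticanonical double-cover trick is the shorter road.
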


\begin{proof}
Let $y\in Y$ be a point. The problem is local around $y$. Without loss of generality, we may assume that $Y$ is   affine. Since $f$ is a Fano fibration of relative dimension $1$, by the main theorem of \cite{AW93}, the linear system $|-K_X|$ is basepoint-free. Let $Z$ be a general divisor in the linear system $|-K_X|$. Then we can assume that $Z$ does not contain the singular locus $X_{sing}$ of $X$. In particular, the codimension of $Z\cap X_{sing}$   in $Z$ is at least $k+1$. Moreover, by Bertini theorem, we can choose $Z$ so that $Z\backslash X_{sing}$ is smooth. Thus  $Z$ is smooth in codimension $k$.  We can also assume that $Z$ does not contain  any component of the fiber of $f^{-1}(y)$.  Since $f$ is equidimensional,  by shrinking $Y$ if necessary, we may then assume further that  the morphism $f|_Z \colon Z\to Y$ is  finite surjective. We note that the degree of $f|_Z$ is $2$, for the intersection number of $-K_X$ and a fiber of $f$ is $2$. 

We note that $Y$ has klt singularities (see \cite[Cor. 3.5]{Fuj99}). Thus it is  Cohen-Macaulay. Moreover, it is locally factorial (see for example \cite[Lem. 4.6]{Dru16}). Since $Z$ is smooth in codimension $k$, by Lemma \ref{lem-double-cover}, $Y$ is smooth in codimension $k$.
\end{proof}

Remark that this lemma is not true without assuming that $X$ is locally factorial. There is a Mori fibration $f \colon X\to Y$ such that $X$ is a threefold with isolated terminal singularities and  that $Y$ is a singular surface, see \cite[Example-Definition 1.1.1]{MoP08}

\begin{lemma}
\label{lem-base-Picard-number}
Let $X$ be a Fano variety with $\mathbb{Q}$-factorial klt singularities such that $\rho(X)=\mathrm{dim}\, X-1$ and $\mathrm{Nef}(X)=\mathrm{Psef}(X)$. Let $f \colon X\to Y$ be a Mori fibration. Then  the dimension of  $Y$ is equal to $\rho(Y)$ or $\rho(Y)+1$.
\end{lemma}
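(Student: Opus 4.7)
The plan is to reduce the statement to the basic bound $\rho(Y)\le\dim Y$ applied to the base $Y$, which is essentially Druel's inequality \cite[Lem. 4.9]{Dru14} already recalled in the introduction.

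First I would use the defining property of a Mori fibration: the relative Picard number equals one, so $\rho(X)=\rho(Y)+1$. Combined with the hypothesis $\rho(X)=\dim X-1$, this gives $\rho(Y)=\dim X-2$.

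Next I would invoke Lemma \ref{lem-base-fano} to propagate the hypotheses from $X$ to $Y$: namely, $Y$ is itself a $\mathbb{Q}$-factorial klt Fano variety with $\mathrm{Nef}(Y)=\mathrm{Psef}(Y)$. In particular $Y$ has log canonical singularities, so \cite[Lem. 4.9]{Dru14} applies and yields $\rho(Y)\le\dim Y$. Writing $r:=\dim X-\dim Y$ for the relative dimension of $f$, this inequality translates to $r\le 2$, while clearly $r\ge 1$ since a Mori fibration has positive-dimensional fibers.

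Finally a case split concludes: if $r=1$ then $\dim Y=\dim X-1=\rho(Y)+1$, and if $r=2$ then $\dim Y=\dim X-2=\rho(Y)$, which are precisely the two possibilities asserted. There is no real obstacle in the argument; the entire content rests on the Druel inequality $\rho(Y)\le\dim Y$, which is made available for $Y$ by the transfer Lemma \ref{lem-base-fano}.
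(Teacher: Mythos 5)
Your proof is correct and follows essentially the same route as the paper: transfer the hypotheses to $Y$ via Lemma \ref{lem-base-fano}, apply Druel's inequality $\rho(Y)\le\dim Y$, and combine with $\rho(Y)=\rho(X)-1$ and $\dim Y\le\dim X-1$ to conclude. No issues.
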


\begin{proof}
By  Lemma \ref{lem-base-fano}, $Y$ is a $\mathbb{Q}$-factorial klt Fano variety with $\mathrm{Nef}(Y)=\mathrm{Psef}(Y)$. In particular, we have $\rho(Y)\leqslant \mathrm{dim}\, Y$ by \cite[Lem. 4.9]{Dru16}. Since $f$ is a Mori fibration, we have $\rho(Y)=\rho(X)-1$. Hence $\mathrm{dim}\, Y \leqslant \mathrm{dim}\, X-1=\rho(X)=\rho(Y)+1$. Thus $\mathrm{dim}\, Y$ is equal to $\rho(Y)$ or $\rho(Y)+1$.
\end{proof}



\begin{lemma}
\label{lem-rho=dim-to-p1}
Let $X$ be a $\mathbb{Q}$-factorial klt Fano variety such that $\mathrm{Nef}(X)=\mathrm{Psef}(X)$ and $\rho(X)=\mathrm{dim}\, X$. Then there is a  fibration from $X$ to $\p^1$.
\end{lemma}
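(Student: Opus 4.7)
The plan is to prove the lemma by induction on $n = \dim X$. The base case is $n=1$: any normal curve is smooth, and a smooth projective Fano curve is isomorphic to $\p^1$, so the identity morphism provides the required fibration.

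For the inductive step, I would apply \cite[Lem. 4.4]{Dru14} to the constant fibration $X\to \Spec\mathbb{C}$ in order to produce a Mori fibration $f:X\to Y$. By Lemma \ref{lem-base-fano}, $Y$ is a $\mathbb{Q}$-factorial klt Fano variety with $\Nef(Y)=\Psef(Y)$. Since $f$ is a Mori fibration, one has $\rho(Y)=\rho(X)-1=n-1$; combined with the bound $\rho(Y)\le \dim Y$ from \cite[Lem. 4.9]{Dru14} and the inequality $\dim Y\le n-1$ (because $f$ has positive relative dimension), one deduces $\dim Y = \rho(Y) = n-1$. Hence $Y$ satisfies the hypotheses of the lemma in dimension $n-1$.

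By the inductive hypothesis there is a fibration $g:Y\to \p^1$. The composition $g\circ f:X\to \p^1$ is proper and surjective, and it has connected fibers since both $f$ and $g$ do: for any $t\in\p^1$, the set $g^{-1}(t)$ is connected, and $f^{-1}$ of a connected set is connected because $f$ is a proper morphism with connected fibers. Thus $g\circ f$ is the desired fibration. The main point where one must be careful is checking that the Mori fibration actually forces the base to satisfy $\rho(Y)=\dim Y$ (so the induction applies), but this is an immediate combination of \cite[Lem. 4.9]{Dru14} with the constraint that the relative dimension of $f$ is at least one; no further work is required.
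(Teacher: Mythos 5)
Your proof is correct, but it takes a different route from the paper. The paper's argument is two lines: it contracts a codimension-one face $V$ of $\NE(X)$ to get $f:X\to B$ with $\rho(B)=1$, and then invokes the equality statement \cite[Lem. 4.9.2]{Dru14} (the case-of-equality refinement of $\rho\le\dim$, which says that for such an $X$ any contraction has base with $\dim B=\rho(B)$) to conclude $\dim B=1$, so $B\cong\p^1$. You instead induct on $\dim X$, peeling off one Mori fibration at a time: the key observation that a Mori fibration drops $\rho$ by exactly $1$ while $\dim$ drops by at least $1$, combined with the inequality $\rho\le\dim$ on the base, forces the base to again satisfy $\rho=\dim$, and composing fibrations (which indeed preserves properness, surjectivity and connectedness of fibers) finishes the induction. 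What your approach buys is that you only need the \emph{inequality} form of \cite[Lem. 4.9]{Dru14} together with Lemma \ref{lem-base-fano} and the existence of Mori fibrations \cite[Lem. 4.4]{Dru14}, all of which are already used elsewhere in this paper, rather than the finer sub-statement 4.9.2; in effect you re-derive that sub-statement along the way (and your intermediate step is essentially Lemma \ref{lem-base-Picard-number} in the extremal case). The cost is length: the paper's citation-based proof is shorter, and your composite fibration $X\to\p^1$ is the contraction of the codimension-one face the paper uses directly. Both arguments are valid.
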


\begin{proof}
Let $V$ be a face in $\NE(X)$ of codimension $1$ and let $f \colon X\to B$ be the corresponding contraction. Then $\mathrm{dim}\, B= \rho(B)=1$ by  \cite[Lem. 4.9.2]{Dru16}. This shows that $B\cong \p^1$.
\end{proof}

\begin{lemma}
\label{lem-equidimensional}
Let $X$ be a Fano variety with $\mathbb{Q}$-factorial klt singularities such that $\rho(X)=\mathrm{dim}\, X-1\geqslant 2$ and $\mathrm{Nef}(X)=\mathrm{Psef}(X)$. Assume that there is no fibration from $X$ to $\p^1$. Then   every Mori fibration $f \colon X\to Y$  is equidimensional of relative dimension $1$.
\end{lemma}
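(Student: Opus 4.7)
The plan is to establish the two conclusions of the lemma in order.

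\textbf{Relative dimension equals $1$.} By Lemma~\ref{lem-base-fano}, $Y$ is a $\mathbb{Q}$-factorial klt Fano variety with $\mathrm{Nef}(Y)=\mathrm{Psef}(Y)$, and since $f$ is Mori, $\rho(Y)=\rho(X)-1=\dim X-2$. Lemma~\ref{lem-base-Picard-number} then forces $\dim Y\in\{\rho(Y),\rho(Y)+1\}$. The case $\dim Y=\rho(Y)$ is excluded: Lemma~\ref{lem-rho=dim-to-p1} applied to $Y$ would produce a fibration $Y\to\p^1$, and composing with $f$ one would obtain a fibration $X\to\p^1$, contrary to the standing hypothesis. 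Hence $\dim Y=\dim X-1$ and $f$ has relative dimension $1$.

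\textbf{Equidimensionality: reduction to a contracted divisor.} Suppose for contradiction that $f$ has a fiber of dimension $\geqslant 2$, and let $B\subseteq Y$ be the closed set of $y$ with $\dim f^{-1}(y)\geqslant 2$. The generic fiber being a curve makes $B$ proper, and $\codim_Y B\geqslant 2$ because otherwise $\dim f^{-1}(B)\geqslant\dim B+2=\dim X$ would force $f^{-1}(B)=X$. The plan is to exhibit a prime divisor $E\subseteq X$ with $f(E)\subseteq B$ and then contradict $\mathrm{Nef}(X)=\mathrm{Psef}(X)$. In the balanced case $\dim B=\dim Y-2$, the preimage $f^{-1}(B)$ has dimension $\geqslant\dim X-1$, and any of its divisorial irreducible components serves as $E$. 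The case $\codim_Y B>2$ is the main technical hurdle, as $f^{-1}(B)$ need not contain a divisor; it can be attacked by an induction on $\dim X$ using the structural consequence of the argument below---namely that no prime divisor of $X$ can be contracted by $f$ to codimension $\geqslant 2$---together with a reduction by a general hyperplane section of $X$.

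\textbf{The contradiction from $E$.} Since $E$ is effective, it is nef by the hypothesis $\mathrm{Nef}(X)=\mathrm{Psef}(X)$. A general fiber of $f$ is disjoint from $E$, so $E\cdot R=0$ for the Mori ray $R$ contracted by $f$, whence $[E]\in R^\perp\subseteq\N^1(X)$. Both $R^\perp$ and $f^*\N^1(Y)$ have dimension $\rho(Y)=\dim X-2$ with the latter sitting in the former, so they coincide; hence $[E]=f^*[L]$ for some $[L]\in\N^1(Y)$. Because $f$ is surjective with connected fibers, the nefness of $f^*L=[E]$ transfers to $L$; and since $\mathrm{Nef}(Y)=\mathrm{Psef}(Y)$ by Lemma~\ref{lem-base-fano}, $L$ is pseudo-effective. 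Fix a very ample divisor $H$ on $Y$ and take a general complete intersection curve $H'=H_1\cap\cdots\cap H_{\dim Y-1}$ with $H_i\in|H|$: by Bertini, $H'$ is an irreducible curve in the class $H^{\dim Y-1}$, and since $\dim f(E)\leqslant\dim Y-2$, it misses $f(E)$. Lifting $H'$ to a curve $C\subseteq f^{-1}(H')\subseteq X$ that maps finitely onto $H'$, we obtain $C\cap E\subseteq f^{-1}(H'\cap f(E))=\emptyset$, hence $0=E\cdot C=f^*L\cdot C=L\cdot f_*C$ and therefore $L\cdot H^{\dim Y-1}=0$. Standard positivity---a nonzero pseudo-effective divisor pairs strictly positively with the top self-intersection of an ample class---now forces $L\equiv 0$, so $[E]=f^*[L]=0$ in $\N^1(X)$. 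This contradicts the fact that a nonzero effective divisor on a Fano variety has nontrivial numerical class.
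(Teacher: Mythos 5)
Your first step (relative dimension $1$) is correct and matches what the paper does implicitly via Lemmas \ref{lem-base-fano}, \ref{lem-base-Picard-number} and \ref{lem-rho=dim-to-p1}. Your contradiction argument in the ``balanced case'' is also sound: a prime divisor $E$ with $\codim_Y f(E)\ge 2$ is nef, lies in $R^\perp=f^*\N^1(Y)$, and pairing with a general complete intersection curve avoiding $f(E)$ forces $E\equiv 0$, which is absurd. The problem is that this only produces the required divisor when the jumping locus $B$ has codimension exactly $2$ in $Y$; when $\codim_Y B>2$ (e.g.\ an isolated point of $Y$ over which the fiber is a surface, with $\dim Y$ large), $f^{-1}(B)$ may well have codimension $\ge 2$ in $X$ and there is no divisor to feed into the argument. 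You acknowledge this is ``the main technical hurdle'' and defer it to ``induction on $\dim X$ \dots together with a reduction by a general hyperplane section of $X$,'' but this is not a proof: a general hyperplane section of $X$ destroys every hypothesis of the lemma (Fano, $\rho(X)=\dim X-1$, $\Nef(X)=\Pef(X)$, absence of a fibration to $\p^1$), so there is nothing to apply the inductive hypothesis to, and the ``structural consequence'' you invoke (no divisor is contracted to codimension $\ge 2$) is vacuous precisely in the case at hand, where no divisor is contracted at all. As it stands, the proposal proves the lemma only under the additional assumption that the non-equidimensional locus has codimension $2$.

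For comparison, the paper's induction is on $\dim X$ but runs entirely inside the Mori-theoretic setup, not via hyperplane sections: given a fiber $F$ of $f$ with $\dim F\ge 2$, one picks an extremal ray $R_1$ adjacent to $R$ (spanning a $2$-dimensional face $V$), forms the square $X\to Y$, $X\to Z$, $Y\to W$, $Z\to W$, checks using Lemma \ref{lem-rho=dim-to-p1} that $Z$ again satisfies all the hypotheses with $\dim Z=\dim X-1$, and applies the inductive hypothesis to conclude that $Z\to W$ is equidimensional of relative dimension $1$; since $g|_F$ is finite (no curve of $F$ lies in $R_1$), $g(F)$ has dimension $\ge 2$ yet is contracted to a point by $Z\to W$ --- a contradiction. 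If you want to salvage your approach, you would need either this kind of argument or a genuine reason why $\codim_Y B>2$ cannot occur; neither is supplied.
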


\begin{proof}
We will prove the lemma by induction on the dimension of $X$.   If $\mathrm{dim}\, X=3$, then $\mathrm{dim}\, Y=2$.  In this case, the Mori fibration $f$ must be equidimensional by Lemma \ref{lem-relative-Pic}. Assume that the lemma is true if $\mathrm{dim}\, X = n$ for some $n\geqslant 3$. 
 
Now we assume that $\mathrm{dim}\, X=n+1$. Assume that $f$ is the fibration which corresponds to an extremal ray $R$ of  $\NE(X)$. Let $R_1$ be another extremal ray such that $R$ and $R_1$ generates a face $V$ of dimension $2$. Let $g \colon X\to Z$ be the Mori fibration which corresponds to $R_1$ and let $X\to W$ be the  fibration corresponding to $V$. We obtain the following commutative diagram 

\centerline{
\xymatrix{
X  \ar[r]^f \ar[d]^{g}
&Y  \ar[d]^h
\\
Z   \ar[r]^p   
&W  
}
}

By Lemma \ref{lem-base-fano}, $Z$ is a  $\mathbb{Q}$-factorial klt Fano variety such that $\mathrm{Nef}(Z)=\mathrm{Psef}(Z)$. By Lemma \ref{lem-base-Picard-number}, $\rho(Z)$ is either $\mathrm{dim}\, Z$ or $\mathrm{dim}\, Z-1$. Since there is no fibration from $Z$ to $\p^1$, Lemma \ref{lem-rho=dim-to-p1} shows that $\rho(Z)=\mathrm{dim}\, Z-1$. Since $\rho(Z)=\rho(X)-1=n-1$, we obtain that $\mathrm{dim}\, Z=n$.  By induction hypothesis, the morphism $p$ is equidimensional of relative dimension $1$.  Suppose that there is a fiber $F$ of $f$ such that $\mathrm{dim}\, F\geqslant 2$.  Since none of the curves in $F$ is   contracted by $g$, the dimension of $E=g(F)$ is also larger than $1$. Moreover, since $h(f(F))$ is a point, $p(E)$ is also a point. We obtain a contradiction. Hence $f$ is equidimensional of relative dimension $1$. This completes the induction and the proof of the lemma.
\end{proof}

Now we will prove Lemma \ref{lem-base-*} and Lemma \ref{lem-base-*-p1}.

\begin{proof}[{Proof of Lemma \ref{lem-base-*}}]
We will prove the lemma by induction on the relative Picard number $k$ of $f$. If $f$ is a Mori fibration, then by Lemma \ref{lem-equidimensional}, $f$ is equidimensional of relative dimension $1$. Hence $\rho(Y)=\mathrm{dim}(Y)-1$. Moreover, $Y$ is a Fano variety with locally factorial canonical singularities such that $\mathrm{Nef}(Y)=\mathrm{Psef}(Y)$ by Lemma \ref{lem-base-fano} and is smooth in codimension $2$ by Lemma \ref{lem-Mori-fibration-relative-1}.

Assume that the lemma is true for $k\leqslant l$ for some $l\geqslant 1$. Now we consider the case when $k=l+1$. There is a Mori fibration $g \colon X\to Z$ over $Y$ since $\mathrm{Nef}(X)=\mathrm{Psef}(X)$ (see \cite[Lem. 4.4]{Dru16}). Hence, by induction, $Z$ is a Fano variety with locally factorial canonical singularities such that $Z$ is smooth in codimension $2$, $\rho(Z) = \mathrm{dim}\, Z-1$ and  $\mathrm{Nef}(Z) = \mathrm{Psef}(Z)$. Moreover, the relative Picard number of $Z\to Y$ is $l$. Thus, by induction hypotheses, $Y$ is a Fano variety with locally factorial canonical singularities such that $Y$ is smooth in codimension $2$, $\rho(Y) = \mathrm{dim}\, Y-1$ and  $\mathrm{Nef}(Y) = \mathrm{Psef}(Y)$. This completes the proof of the Lemma.
\end{proof}

\begin{proof}[{Proof of Lemma \ref{lem-base-*-p1}}]
We will prove the lemma by induction on $r$. If $r=1$, then the morphism $f_1$ is equidimensional of relative dimension $1$ since $f_1\times f_2$ is finite. By Lemma \ref{lem-relative-1-Mori}, $f_1$ is a Mori fibration. By Lemma \ref{lem-base-fano} and Lemma \ref{lem-Mori-fibration-relative-1}, we obtain that $W$ is a  Fano variety  with locally factorial canonical singularities such that $W$ is smooth in codimension $2$, $\rho(W) = \mathrm{dim}\, W-1$ and  $\mathrm{Nef}(W) = \mathrm{Psef}(W)$. Assume that the lemma is true for  $r\leqslant k$ where $k\geqslant 1$ is an integer.

Now we assume that $r=k+1$.   We write $(\p^1)^{k+1}=(\p^1)^k\times \p^1$. Let  $q \colon (\p^1)^{k+1}\to (\p^1)^k$ be the natural projection onto the product of the first $k$ $\p^1$'s and  let  $p \colon (\p^1)^{k+1}\to \p^1$ be the natural projection onto the last factor.  Let $\pi \colon  X\to Y$ be the Stein factorization  of the product $f_1\times  q \colon X\to W\times (\p^1)^k$. Then the product $\pi \times p \colon  X\to Y\times \p^1$  is also a finite surjective morphism. Hence, by induction hypotheses, $Y$ is a Fano variety with locally factorial canonical singularities such that $Y$ is smooth in codimension $2$, $\rho(Y) = \mathrm{dim}\, Y-1$ and  $\mathrm{Nef}(Y) = \mathrm{Psef}(Y)$. Let $g_1 \colon Y\to W$ and  $g_2 \colon Y\to (\p^1)^k$ be the natural projections. The morphism $g_1$ is a fibration, for the composition $f_1=g_1\circ \pi \colon X\to W$ is. Since $X\to W\times (\p^1)^k\times \p^1$ is finite surjective, we obtain that $g_1\times g_2 \colon Y\to W\times (\p^1)^k$ is also finite surjective. By induction hypotheses, $W$ is a Fano variety with locally factorial canonical singularities such that $W$ is smooth in codimension $2$, $\rho(W) = \mathrm{dim}\, W-1$ and  $\mathrm{Nef}(W) = \mathrm{Psef}(W)$. This completes the proof of the lemma.

\centerline{
\xymatrix{
X \ar[rrd]_{\pi} \ar[rr]^{\mathrm{finite}}_{\mathrm{surjective}}  & & Y\times \p^1 \ar[d] \ar[rr]^{\mathrm{finite\ \ }}_{\mathrm{surjective\ }}   & & W\times {(\p^1)^{k+1}} \\
& & Y \ar[rr]^{\mathrm{finite}}_{\mathrm{surjective}}  & & W\times {(\p^1)^k} & &
}
}
\end{proof}

\section{Finite morphisms between Fano threefolds}
\label{Finite morphisms between Fano varieties}

In this section, we will prove some results on finite morphisms between Fano threefolds.  Particularly, we will  classify  these morphisms under some assumptions.  As a corollary, we will prove Theorem \ref{thm-classification-3f} by using the classification results  of Proposition \ref{prop12-finite-over-12} and Proposition \ref{prop12-finite-over-22}. Moreover, in the last two sections of the paper, by using the results of this section,   we can  show  that the factor $X_2$ in Theorem \ref{thm-classification} is a finite cover of degree at most $4$ over $(\p^1)^s\times \p^2$ or over $(\p^1)^s \times W$, where $W$ is an ample hypersurface in $\p^2\times \p^2$. It is the crucial ingredient for the proof of Theorem \ref{thm-classification}.

We recall that if $X$ is a Fano threefold with Gorenstein canonical singularities, then $$h^0(X,\sO_X(-K_X))=-\frac{1}{2}K_X^3+3.$$  In particular, $-K_X^3$ is a positive even integer (see \cite[\S 4.4]{Reid83}). We will first prove some lemmas. 

\begin{lemma}
\label{lem-smooth-Fano-surface-curve}
Let $f \colon X\to Y$ be a  Fano fibration of relative dimension $1$ between normal quasi-projective varieties. Assume that $X$ is smooth in codimension $2$. Then there is an open subset $U$ of $Y$ whose complement has codimension at least $2$ such that $f$ has reduced fibers over $U$.
\end{lemma}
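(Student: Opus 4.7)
The plan is to show that the closed subset $N\subseteq Y$ over which $f$ has non-reduced fibers has codimension at least $2$; then $U:=Y\setminus N$ works. Since reducedness of fibers is an open condition on the source of a morphism, $N$ is closed in $Y$ by properness of $f$. Thus it suffices to check that $N$ contains no generic point $\eta$ of any prime divisor $D\subseteq Y$.

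Fix such $\eta$, and set $R:=\sO_{Y,\eta}$, a DVR by normality of $Y$. I will study the base change $X_R:=X\times_Y \Spec(R)$, a $2$-dimensional scheme proper over $\Spec(R)$. The decisive observation is that every point of $X_R$ is the generic point of a subvariety of $X$ of codimension at most $2$: the generic point of $X$; the generic points of divisors dominating either $Y$ or $D$; and the generic points of irreducible subvarieties of $X$ mapping generically finitely onto $D$. By the smoothness-in-codimension-$2$ hypothesis, $X$ is regular at each of these points, so $X_R$ is a regular $2$-dimensional scheme. Together with the fact that its generic fiber is a smooth proper genus-$0$ curve over $K=\mathrm{Frac}(R)$ (coming from $-K_X$ being relatively ample of relative dimension $1$, with general geometric fiber $\p^1$), this identifies $X_R$ with a regular arithmetic surface model of a Severi--Brauer curve.

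Now apply adjunction. Arithmetic genus is constant in the flat family $X_R\to\Spec(R)$, so $p_a(X_\eta)=0$. Write $X_\eta=\sum m_i E_i$, with every $E_i$ Cartier by regularity of $X_R$. Then $X_\eta\cdot X_\eta=0$ since $X_\eta$ is the pullback of a principal Cartier divisor on $\Spec(R)$, and $a_i:=-K_{X_R}\cdot E_i$ is a positive integer for each $i$ by the relative ampleness of $-K_X$. The adjunction identity
\[
-2=2p_a(X_\eta)-2=X_\eta\cdot(X_\eta+K_{X_R})=-\sum m_i a_i
\]
gives $\sum m_i a_i=2$. If some $m_j\geqslant 2$, the only possibility is $X_\eta=2E_j$, $a_j=1$, whence $E_j^2=0$ and $K_{X_R}\cdot E_j=-1$; adjunction on $E_j$ then yields the absurdity $2p_a(E_j)-2=-1$. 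So every $m_i=1$ and $X_\eta$ is reduced. The main technical obstacle is ensuring the applicability of the intersection-theoretic and adjunction formulas on $X_R$---in particular, flatness of $X_R\to\Spec(R)$, which follows from miracle flatness once one absorbs the codimension-$\geqslant 2$ fiber-dimension-jumping locus of $f$ into $N$---but these are classical facts about regular arithmetic surfaces over DVRs.
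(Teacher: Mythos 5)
Your proof is correct, but it takes a genuinely different route from the paper. The paper reduces to a global surface: it cuts $Y$ by $\dim Y-1$ general hyperplane sections, uses smoothness in codimension $2$ to get a smooth projective surface fibered over a smooth curve, computes $-K_X\cdot E=2$ for a fiber $E$, and then rules out the remaining bad case $E=2C$ by Tsen's theorem (the conic bundle over a curve has a section, which forces a reduced component). You instead localize at the generic point of a divisor, observe that every point of $X_R=X\times_Y\Spec\sO_{Y,\eta}$ has codimension at most $2$ in $X$ so that $X_R$ is a regular two-dimensional scheme, flat over the DVR by miracle flatness, and then run the same numerology $\sum m_ia_i=2$ on this arithmetic surface, killing the double fiber by the adjunction parity $2p_a(E_j)-2=E_j^2+K\cdot E_j=-1$ rather than by Tsen. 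The two arguments share the identical numerical core ($-K$ Cartier and $f$-ample of fiber degree $2$); yours is local on $Y$, works over the non-closed residue field $k(\eta)$, and avoids any appeal to the Brauer group, while the paper's stays within classical surface geometry over $\mathbb{C}$. One point you should tighten: openness of reducedness of fibers on the source holds for \emph{flat} morphisms (EGA IV 12.1.1), so to produce the open set $U$ you should first pass to the open locus where $f$ is flat with geometrically reduced fibers, note its complement in $X$ is closed with closed image in $Y$ by properness, and then use your DVR computation (which gives flatness \emph{and} reducedness of $X_\eta$, the latter upgraded from generic reducedness because $X_\eta$ is a Cartier divisor in the regular scheme $X_R$, hence without embedded points) to show this image contains no codimension-one point. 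The paper is equally terse on this point, so this is a presentational gap rather than a mathematical one.
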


\begin{proof}
By taking general hyperplane sections in $Y$, we can reduced to the case when $X$ is a smooth surface and $Y$ is a smooth curve.

Let $y$ be a point in $Y$ and let $E=f^*y$. Then $-K_X\cdot E=2$. Since $K_X$ is Cartier and $f$ is a Fano fibration, this implies that $E$ has at most two components.   If $E$ has two components $C$ and $D$, then both of them are reduced.  If $E$ has one component $C$, then $E$ is also reduced since there is a section from $Y$ to $X$ by Tsen's theorem.
\end{proof}

\begin{lemma}
\label{lem-normal-divisor}
Let $X$ be a normal threefold and let $B_1\cong B_2\cong \p^2$. Assume that there are two equidimensional fibrations $f_1 \colon X\to B_1$ and $f_2 \colon X\to B_2$ such that $f_1\times f_2 \colon X\to B_1\times B_2$ is finite onto its image $W$. Assume that $W$ is of degree $(p,q)$ such that $p,q\in \{1,2\}$. Then $W$ is normal.
\end{lemma}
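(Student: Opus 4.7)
The plan is to verify Serre's criterion for normality. Since $W$ is a Cartier divisor in the smooth $4$-fold $B_1 \times B_2$, it is Gorenstein and hence satisfies Serre's condition $S_2$; normality is thus equivalent to regularity in codimension $1$, so it suffices to show $\dim \mathrm{Sing}(W) \leq 1$. I would split the argument by bidegree $(p,q) \in \{1,2\}^2$, treating the three ``easy'' bidegrees first and then focusing on the subtle $(2,2)$ case.

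For the three bidegrees $(1,1)$, $(1,2)$, $(2,1)$, I would obtain $\dim\mathrm{Sing}(W)\leq 1$ directly from the defining polynomial $F$, using only that $F$ is irreducible (since $W$ is the image of the irreducible $X$) together with the equidimensionality of the projections $\pi_i|_W$. For $(1,1)$, the $3\times 3$ coefficient matrix of $F$ has rank $\geq 2$, so $\mathrm{Sing}(W)$ is at most one point. For $(1,2)$, writing $F=\sum_i x_iQ_i(y)$, the equidimensionality of $\pi_2|_W$ forces $Q_0,Q_1,Q_2$ to have no common zero in $B_2$, and a Jacobian calculation shows $\mathrm{Sing}(W)$ injects via $\pi_2$ onto a subset of the plane quartic $\{\sum_i Q_i^2=0\}$, which has dimension $\leq 1$. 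The case $(2,1)$ is symmetric.

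The delicate case is $(p,q)=(2,2)$, where an a priori $2$-dimensional singular locus is possible; for instance, $F=\sum_{i<j}(x_iy_j-x_jy_i)^2$ is an irreducible $(2,2)$-hypersurface non-normal along the diagonal. Suppose for contradiction that $\mathrm{Sing}(W)$ contains a $2$-dimensional component $S$. Both projections $\pi_i|_W : W \to B_i$ are equidimensional of relative dimension $1$ and $W$ is Cohen-Macaulay, so by miracle flatness they are flat; moreover, smoothness of the fiber $W_b$ at $c$ transfers through the flat family over the smooth base $B_i$ to smoothness of $W$ at $(b,c)$. I would then analyze $\pi_1(S)\subseteq B_1$: if $\pi_1(S) \subsetneq B_1$, then for generic $b\in\pi_1(S)$ the fiber $W_b$ would either contain $S\cap W_b$ in a smooth irreducible conic (contradicting $S\subset\mathrm{Sing}(W)$ via the smoothness transfer) or be forced to be a double line, a case ruled out by the discriminant/irreducibility argument below; hence $\pi_1(S)=B_1$ (and symmetrically $\pi_2(S)=B_2$). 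Then every $W_b$ is a singular conic. Ruling out the global double-line option, which would force the discriminant of $F(x,\cdot)$ as a quadratic in $y$ to vanish identically in $x$ and hence $F=c\cdot\ell^2$ for some bidegree-$(1,1)$ form $\ell$ contradicting the irreducibility of $F$, I conclude that for generic $b$ the fiber $W_b$ is the union of two distinct lines meeting at the unique point $q=W_b\cap S$.

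The contradiction then comes from the normalization. Let $\nu : \widetilde W \to W$ be the normalization. Since $X$ is normal and $X\to W$ is finite surjective, the map factors as $X\to\widetilde W\to W$. At a generic point $p\in S$, the Gorenstein property combined with $p$ being a codimension-$1$ non-normal point forces the tangent cone of $W$ at $p$ to be a rank-$2$ quadric, so $W$ is analytically locally the union of two smooth branches meeting transversely along $S$, and $\nu$ separates them into two distinct preimages of $p$. For generic $b$, the two lines of $W_b$ pass through $q\in S$ with distinct tangent directions; each tangent lies in one of the two tangent hyperplanes of the two branches of $W$ at $q$, so the two lines belong to two \emph{different} analytic branches. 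Consequently $\widetilde W_b=\nu^{-1}(W_b)$ is a disjoint union of two lines, hence disconnected. But $X\to\widetilde W$ is finite surjective, so the fiber of $f_1:X\to B_1$ over $b$ is a finite cover of the disconnected $\widetilde W_b$ and is therefore itself disconnected, contradicting the hypothesis that $f_1$ is a fibration. The main obstacle in the plan is this last local analysis along $S$, which hinges on the Gorenstein property producing a rank-$2$ tangent cone at codimension-$1$ singular points and on verifying that the two lines of the reducible conic $W_b$ distribute to the two different branches of $W$.
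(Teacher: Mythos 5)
Your overall strategy (Serre's criterion, miracle flatness for the projections, and transferring smoothness from fibers to $W$) is sound and matches the paper's, and the bidegrees $(1,1)$, $(1,2)$, $(2,1)$ are fine (for $(1,2)$ the conclusion is in fact stronger than you state: $\partial F/\partial x_i=Q_i(y)$ and the $Q_i$ have no common zero, so $W$ is smooth). The first genuine gap is in the $(2,2)$ case, in the branch $\pi_1(S)\subsetneq B_1$. There the generic fiber $W_b$ over $b\in C_1:=\pi_1(S)$ must be a double line $2L_b$ (a degree-two plane curve singular along a curve is non-reduced), and you claim this is excluded by your ``discriminant/irreducibility argument below''. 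It is not: that argument only excludes the discriminant of $F(x,\cdot)$ vanishing \emph{identically} in $x$, whereas here it need only vanish along the curve $C_1$. This configuration genuinely occurs for irreducible $(2,2)$-forms whose generic fibers in both directions are smooth conics: e.g.\ $F=x_0^2(y_1^2+y_0y_2)+x_0y_0x_1y_1+y_0^2(x_1^2+x_2^2)$ is irreducible, has smooth generic fibers for both projections, and is singular along the surface $\{x_0=y_0=0\}$. What kills such an $F$ under the hypotheses is neither irreducibility nor the generic discriminant but equidimensionality at finitely many special points: writing $F=A^2Q+A\ell B+\ell^2R\in(A,\ell)^2$ (with $C_1=\{A=0\}$, $L=\{\ell=0\}$), the fiber of $\pi_1$ over a zero of $R$ on $C_1$ is all of $B_2$ (in the example, over $(0,1,\pm i)$). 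You never make an argument of this kind, so the hardest sub-case of the lemma — $S=C_1\times C_2$ with both images proper — is left open. Note this is exactly the configuration that the observation ``$\pi_i(\mathrm{Sing}\,W)$ is a proper subvariety of $B_i$ for $i=1,2$'' fails to exclude, since that only bounds $\dim\mathrm{Sing}(W)$ by $2$; it really does require separate treatment.

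The second gap is the tangent-cone claim in the branch $\pi_1(S)=B_1$: being Gorenstein and non-normal in codimension one forces multiplicity two along $S$, but not a rank-two tangent cone. A generically cuspidal edge (transverse $A_2$-singularity) is Gorenstein, has rank-one tangent cone and is unibranch, so $\nu^{-1}(p)$ can be a single point and the disconnectedness of $\widetilde W_b$ does not follow; your contradiction evaporates. This detour through the normalization is also unnecessary and uses a weaker hypothesis than is available: since $f_1$ is a fibration from the normal variety $X$, its \emph{general} fiber is irreducible, hence so is its image $W_b$ under the finite map $X\to W$; a general fiber of $p_1$ therefore cannot be a pair of distinct lines. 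Combined with reducedness of general fibers (from flatness and reducedness of $W$) and $q\leqslant 2$, general fibers of $p_1$ are smooth, which is how the paper disposes of the case $\pi_1(S)=B_1$ in one line. In summary: replace the normalization argument by irreducibility of general fibers, and supply an actual argument (via $F\in(A,\ell)^2$ and equidimensionality at special points) for the product case $S=C_1\times C_2$, which at present is the missing heart of the proof.
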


\begin{proof}
Since $W$ is Cohen-Macaulay, we only need to prove that it is smooth in codimension $1$. Let $Z$ be the singular locus of $W$.  Let $p_i \colon W\to B_i$ be the natural projection for $i=1,2$. Then $p_1$ is equidimensional. Since $W$ is Cohen-Macaulay and $B_1$ is smooth, this implies that $p_1$ is flat.

Since general fibers of $f_1 \colon X\to B_1$ are irreducible, general fibers of $p_1$ are  irreducible.   General fibers of $p_1$ are also reduced for $W$ is reduced. Since $q\in\{1,2\}$, we obtain  that general fibers of $p_1$ are smooth rational curves. Since $p_1$ is flat, this implies that $p_1(Z)$ is a proper subvariety of $B_1$.   By symmetry, we can obtain that $p_2(Z)$ is also a proper subvariety of $B_2$.  Moreover, this argument also shows that if $p=1$ or $q=1$, then $W$ is smooth.

We now assume that  $p=q=2$.  In this case,  every fiber of $p_1$ is a conic in $B_2$, and we have an induced morphism  from $B_1$ to the Chow variety of conics in $B_2$.   Assume by contradiction that there  is  an irreducible component $E$ of $Z$ such that $E$ is a surface.  We note that every reducible fibers of $p_1$  is the union of two smooth rational curves. Since $p_1$ is flat,  this implies that there is a prime divisor $D_1$ in $B_1$ such that $p_1$ has double fibers over $D_1$ and that $E$ is the support of $p_1^* D_1$.   Let  $D_2=p_2(E)$, which must be a prime divisor  in $B_2$.  Since $p_1$ is a conic fibration, this implies that $D_2$ is a line, and every  fiber of $p_1$ over $D_1$ is the double line with support $D_2$.  As a consequence, the induced morphism from $B_1$ to the Chow variety of conics in $B_2$ contracts the divisor $D_1$. Since $B_1$ has Picard number $1$, this implies that every fiber of $p_1$ is the double line with support $ D_2$. We obtain a contradiction.   Hence $Z$ has codimension at least $2$ and  $W$ is normal.
\end{proof}

\begin{lemma}
\label{lem-normal-hypersurface-equidim}
Let $W$ be an ample normal hypersurface in $\p^2\times \p^2$. Assume that $W$ is a Fano variety with $\mathbb{Q}$-factorial klt singularities. Then the natural morphisms from $W$ to $\p^2$ are equidimensional.
\end{lemma}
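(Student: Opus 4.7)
The plan is to argue by contradiction: if the projection $p_1 \colon W \to \p^2$ fails to be equidimensional (the other projection is handled symmetrically), then $W$ must acquire a local singularity incompatible with $\mathbb{Q}$-factoriality.

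First, reduce to a local picture. Since $\dim W = 3$ and the generic fibre of $p_1$ has dimension $1$, a non-equidimensional fibre over some $x_0 \in \p^2$ must have dimension $2$, and is therefore forced to equal $\{x_0\} \times \p^2 \subset W$. Irreducibility of $W$, combined with the fact that $W$ has strictly positive bidegree (say $(a,b)$ with $a,b\geq 1$), rules out a positive-dimensional locus of such bad points: otherwise $W = Z \times \p^2$ for a curve $Z \subset \p^2$, contradicting $b \geq 1$. Pick any such $x_0$ and set $E = \{x_0\} \times \p^2$; this is a prime Weil divisor on $W$ isomorphic to $\p^2$, which by $\mathbb{Q}$-factoriality must be $\mathbb{Q}$-Cartier.

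Next, choose affine local coordinates $(u_1, u_2)$ on $\p^2$ centred at $x_0$ and $(y_1, y_2)$ on the fibre $\p^2$. Since the bi-homogeneous polynomial $F$ cutting out $W$ vanishes identically on $\{u_1 = u_2 = 0\}$, it lies in the ideal $(u_1, u_2)$ and can be written $F = u_1 f_1(u,y) + u_2 f_2(u,y)$. Computation of partial derivatives shows that the singular locus of $W$ along $E$ is cut out inside $E \cong \p^2$ by $f_1(0,y) = f_2(0,y) = 0$, since the $y$-partials of $F$ vanish automatically on $E$. A bidegree argument using $a, b \in \{1, 2\}$ and the irreducibility of $W$ shows that $f_1(0, \cdot)$ and $f_2(0, \cdot)$ cannot both be identically zero, and B\'ezout's theorem in $\p^2$ then produces at least one singular point $(x_0, y^\ast) \in E$ of $W$.

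Finally, analyse the local structure of $W$ at $(x_0, y^\ast)$. After a formal change of coordinates (in the generic transverse case) $F$ takes the standard normal form $u_1 v_1 + u_2 v_2$, the $3$-dimensional ordinary double point. The local divisor class group at such a singularity is $\mathbb{Z}$, and the ideal $(u_1, u_2)$ cutting out $E$ generates a non-torsion element of this class group; hence $E$ is not $\mathbb{Q}$-Cartier at $(x_0, y^\ast)$, contradicting the $\mathbb{Q}$-factoriality of $W$. In the degenerate subcases (tangential intersection of $\{f_1 = 0\}$ and $\{f_2 = 0\}$, or a positive-dimensional singular locus along $E$), the singularity is worse, but the same conclusion holds by a direct verification in $\mathcal{O}_{W,(x_0, y^\ast)}$, exploiting the relation $u_1 f_1 + u_2 f_2 = 0$.

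The main obstacle is ensuring non-$\mathbb{Q}$-principality of $(u_1, u_2)$ across all degenerate subcases in a uniform manner; the cleanest route is to check directly in the local ring that no power $(u_1, u_2)^m$ can be principal, by noting that the leading linear term of any purported generator must lie in $(u_1, u_2)$, while the ratio $u_2/u_1$ (or the analogue given by the specific $f_i$) is never realised in $\mathcal{O}_W$.
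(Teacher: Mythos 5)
Your overall strategy is viable: a non-equidimensional fibre forces $E=\{x_0\}\times\p^2\subseteq W$, and the contradiction should come from $E$ failing to be $\mathbb{Q}$-Cartier. The reduction steps are fine (a $2$-dimensional fibre must be all of $\{x_0\}\times\p^2$; writing $F=u_1f_1+u_2f_2$; normality excludes $f_1(0,\cdot)\equiv f_2(0,\cdot)\equiv 0$; B\'ezout produces a singular point of $W$ on $E$). The genuine gap is the last step. Your contradiction rests entirely on the claim that $E$ fails to be $\mathbb{Q}$-Cartier \emph{at the particular singular point you found}, and you only verify this in the generic case where the singularity is an ordinary double point. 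For the degenerate configurations you offer no argument beyond an assertion that ``the same conclusion holds by a direct verification''; but controlling the local class group of an arbitrary hypersurface singularity of the form $u_1f_1+u_2f_2=0$ is not routine, and your closing heuristic (that a generator of $(u_1,u_2)^m$ would have to realise the ratio $u_2/u_1$) is not a proof -- principality of $(u_1,u_2)^m$ does not require $u_2/u_1$ to be regular. As written, the argument is incomplete exactly at its crux.

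The cleanest repair avoids the local analysis altogether and is global: by the Grothendieck--Lefschetz theorem $\Pic(W)\cong\Pic(\p^2\times\p^2)$, so if $mE$ were Cartier then $mE\sim\alpha H_1+\beta H_2$ with $H_i$ the pullbacks of the hyperplane classes. Intersecting with a curve in a general fibre of $p_1$ (disjoint from $E$) gives $\beta=0$; intersecting with a line $\ell\subset E$ then gives $E\cdot\ell=0$, whereas adjunction on $E\cong\p^2$ gives $\cO_E(E)\cong\cO_{\p^2}(-b)$ with $b\geqslant 1$. This is in substance what the paper does, phrased differently: it invokes Lefschetz to get $\rho(W)=2$, so each projection is a Mori fibration, and a Mori fibration from a $\mathbb{Q}$-factorial klt threefold onto a surface is equidimensional (a $2$-dimensional fibre component would be a $\mathbb{Q}$-Cartier divisor numerically proportional to a pullback from the base, yet contracted to a point). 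If you replace your local singularity analysis by such a numerical argument, your proof closes up and is essentially equivalent to the paper's.
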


\begin{proof}
Since $W$ is ample, it has Picard number $2$ by the Lefschetz theorem (see \cite[Example 3.1.25]{Laz04}).  Thus, if $g \colon W\to \p^2$ is a natural projection, then it is a Mori fibration. Since $W$ is a $\mathbb{Q}$-factorial klt threefold, this implies that  $g$ is equidimensional by Lemma \ref{lem-relative-Pic}.
\end{proof}

\begin{lemma} 
\label{lem--K-big}
Let $f \colon X\to Y$ be a finite surjective morphism  between normal $\mathbb{Q}$-Gorenstein varieties. Assume that  $-K_X$ is big. Then $-K_Y$ is a big divisor.
\end{lemma}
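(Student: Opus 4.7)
The approach is to combine the ramification formula for finite morphisms with the fact that bigness descends along finite surjective maps.

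First I would apply the ramification formula. Because $Y$ is $\mathbb{Q}$-factorial, $K_Y$ is $\mathbb{Q}$-Cartier, so the pull-back $f^*K_Y$ is a well-defined $\mathbb{Q}$-Cartier divisor on $X$. For a finite surjective morphism between normal varieties in characteristic zero, one has the Riemann--Hurwitz type formula
$$K_X \equiv f^*K_Y + R,$$
where $R \ge 0$ is the effective ramification divisor. Effectiveness is checked in codimension one: over the smooth locus $U \subseteq Y$ (whose complement has codimension $\ge 2$), $V := f^{-1}(U)$ has complement of codimension $\ge 2$ in $X$ because $f$ is finite, and on $V$ the formula reduces to the classical Riemann--Hurwitz formula with $R|_V = \sum (e_i - 1) E_i \ge 0$; since $X$ is normal, the effectiveness on $V$ propagates to all of $X$.

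Rewriting the ramification formula gives $f^*(-K_Y) \equiv -K_X + R$. Because $-K_X$ is big and $R$ is effective, the sum $-K_X + R$ is big (bigness is preserved by adding an effective divisor). Hence $f^*(-K_Y)$ is a big $\mathbb{Q}$-Cartier divisor on $X$.

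Finally, I would invoke the standard fact that bigness descends along finite surjective morphisms: for any $\mathbb{Q}$-Cartier divisor $D$ on $Y$ one has $\mathrm{vol}_X(f^*D) = (\deg f) \cdot \mathrm{vol}_Y(D)$, so $f^*D$ big implies $D$ big. Applying this to $D = -K_Y$ gives that $-K_Y$ is big, as desired. No step here is a serious obstacle; the only point that requires a moment's care is the availability of the ramification formula under the normal, $\mathbb{Q}$-factorial hypothesis, but this is classical.
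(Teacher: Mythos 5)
Your argument is correct and is the standard one; the paper itself gives no proof but simply cites \cite[Lem.~8.1]{Dru14}, whose argument is exactly this: write $K_X\equiv f^*K_Y+R$ with $R\ge 0$ (checked in codimension one over the locus where both varieties are smooth), observe $f^*(-K_Y)\equiv -K_X+R$ is big, and descend bigness along the finite map. The only step worth making explicit is that descent, which follows either from $\mathrm{vol}_X(f^*D)=(\deg f)\,\mathrm{vol}_Y(D)$ or, more elementarily, from the norm map sending a nonzero section of $f^*G$ to a nonzero section of $(\deg f)G$, combined with Kodaira's lemma.
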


\begin{proof}
See \cite[Lem. 8.1]{Dru16}.
\end{proof}

\begin{lemma}
\label{lem-Gorenstein-Fano-fibration}
Let $f \colon X\to Y$ be an equidimensional Fano fibration of relative dimension $1$ such that $X$ has Gorenstein canonical singularities and $Y$ is smooth. Then every fiber of $f$ has at most two irreducible components. Moreover, every reducible fiber is reduced and is the union of two smooth rational curves which meets at one point.
\end{lemma}

\begin{proof}
Since $-K_X$ is relatively ample, $R^if_*(\sO_X(-K_X))=0$ for all $i>0$ by the Kawamata-Viehweg vanishing theorem (see \cite[Thm. 1-2-5]{KMM87}). Since $f$ is equidimensional of relative dimension $1$, for every $y\in Y$, we have $h^2(X_y, \sO_{X_y}(-K_X|_{X_y}))=0$. By   \cite[Thm. III.12.11]{Har77}, we obtain that, for all $y\in Y$, $$R^1f_*(\sO_X(-K_X)) \otimes k(y)\to  H^1(X_y, \sO_{X_y}(-K_X|_{X_y}))$$ is an isomorphism, where $k(y)$ is the residue field of $y$. Hence $$h^1(X_y, \sO_{X_y}(-K_X|_{X_y}))=0$$ for all $y\in Y$. By \cite[Thm. III.12.11]{Har77} again, we obtain that $$ f_*(\sO_X(-K_X)) \otimes k(y)\to  H^0(X_y, \sO_{X_y}(-K_X|_{X_y}))$$ is an isomorphism for all $y\in Y$.

We note that general fibers of $f$ are smooth rational curves. Hence the Euler characteristic of the restriction of $\sO_X(-K_X)$ on a general fiber of $f$ is $3$. Since $f$ is equidimensional, $X$ is Cohen-Macaulay and $Y$ is smooth, the morphism $f$ is flat. Since $\sO_X(-K_X)$ is locally free on $X$, it is flat over $Y$. Thus the Euler characteristic of the restriction of $\sO_X(-K_X)$ on every fiber of $f$ is $3$ (see the proof of \cite[Thm. III.9.9]{Har77}). This shows that $f_*(\sO_X(-K_X))$ is a locally free sheaf of rank $3$ by \cite[Cor. III.12.9]{Har77}.

The problem is local on $Y$. Thus, we may assume that $Y$ is affine and $f_*(\sO_X(-K_X))$ is a free sheaf of rank $3$. By the main theorem of \cite{AW93}, $\sO_X(-K_X)$ is $f$-relatively generated. Since the sheaf $f_*(\sO_X(-K_X))$ has rank $3$,  $f^*f_*(\sO_X(-K_X))$ induces a morphism, over $Y$, $$\phi \colon  X\to \p^2\times Y.$$ Since $f_*(\sO_X(-K_X)) \otimes k(y)\to  H^0(X_y, \sO_{X_y}(-K_X|_{X_y}))$ is an isomorphism and  $-K_X|_{X_y}$ is very ample for general $y\in Y$, the morphism $\phi$ is birational onto its image $X'$. Since $-K_X$ is $f$-relatively ample, we obtain that  $X\to X'$ is the normalization morphism.

Since $-K_X$ has degree $2$ on every fiber of $f$, we obtain that $X'_y  \subseteq \p^2\times \{y\}$ is a conic  for every $y\in Y$. Since $X\to X'$ is just the normalization,   the lemma then follows.
%
\end{proof}

The following lemma is the first classification result on finite morphisms in this section.

\begin{lemma}
\label{lem-p1p1p1p1-hypersurface}
Let $Z_1, Z_2$ be two surfaces isomorphic to $\p^1\times \p^1$. Let $Z$ be a normal hypersurface of degree $(1,1,1,1)$ in $Z_1\times Z_2$. Assume that there is a finite morphism $f \colon X\to Z$ of degree   $d\leqslant 2$ such that 

\begin{enumerate}
\item[-] $X$ is Fano threefold with   Gorenstein canonical singularities;
\item[-] the natural morphisms $f_1 \colon X\to Z_1$ and $f_2 \colon X\to Z_2$ induced by $f$ are  equidimensional  fibrations;
\item[-] $K_X\equiv \frac{1}{d} f^*K_Z$.
\end{enumerate} 

Then  $f$ is an isomorphism.
\end{lemma}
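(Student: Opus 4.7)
The plan is to reduce to the case $d = 1$; in that case $f$ is finite and birational onto the normal variety $Z$, hence an isomorphism by Zariski's main theorem. I shall therefore assume $d = 2$ and aim to produce a Cartier triple intersection number on $Z$ equal to $3/2$, contradicting integrality.

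First I would record the intersection theory on $Z$. Write $L_i = H_i|_Z$, where $H_i$ is the pullback of a point under the $i$-th projection $(\p^1)^4 \to \p^1$. Adjunction on the $(1,1,1,1)$-hypersurface $Z \subset (\p^1)^4$ gives $-K_Z = L_1 + L_2 + L_3 + L_4$; since $H_i^2 = 0$ on $(\p^1)^4$, restriction yields $L_i^2 = 0$ on $Z$ while $L_iL_jL_k = 1$ for pairwise distinct $i, j, k$. Expanding, $(-K_Z)^2 \cdot L_1 = 6$ (and $(-K_Z)^3 = 24$).

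Next, assuming $d = 2$, I would produce a line bundle $\cL \in \Pic(Z)$ satisfying $2\cL \equiv -K_Z$ in $\NS(Z)$. Since $X$ is Gorenstein and $Z$ is a normal Cartier divisor in the smooth variety $(\p^1)^4$, both are Cohen--Macaulay of the same dimension, so the finite morphism $f$ is flat. The trace splitting of $f_*\cO_X$ gives $f_*\cO_X = \cO_Z \oplus \cM$ for a line bundle $\cM$, and the algebra multiplication $\cM^{\otimes 2} \hookrightarrow \cO_Z$ identifies $\cL^{\otimes 2} \cong \cO_Z(B)$ in $\Pic(Z)$, with $\cL := \cM^{-1}$ and $B$ the branch divisor. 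The cyclic double-cover formula $K_X = f^*(K_Z + \cL)$, combined with $K_X \equiv \tfrac{1}{2} f^* K_Z$, forces $f^*\cL \equiv -\tfrac{1}{2} f^* K_Z$; injectivity of $f^*$ on $\NS(\cdot)_{\mathbb{Q}}$ then yields $2\cL \equiv -K_Z$ in $\NS(Z)$.

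The contradiction is now immediate: since numerical equivalence of Cartier divisors is compatible with triple intersection products on a projective variety,
\[
4\, \cL^2 \cdot L_1 \;=\; (2\cL)^2 \cdot L_1 \;=\; (-K_Z)^2 \cdot L_1 \;=\; 6,
\]
whence $\cL^2 \cdot L_1 = 3/2$. But $\cL$ and $L_1$ are Cartier on the projective variety $Z$, so by the Snapper--Kleiman intersection theory their triple intersection must be an integer: a contradiction. The main delicate point is the double-cover formalism (the flat decomposition of $f_*\cO_X$ and the associated canonical formula) on the possibly singular normal variety $Z$; this goes through because $Z$ is Gorenstein Cohen--Macaulay and $\cL$ is genuinely Cartier, leaving the non-integrality of $3/2$ as the genuine obstruction to the existence of the putative degree-two cover.
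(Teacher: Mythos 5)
Your reduction to $d=2$ and the identity $D\equiv -K_Z$ for the branch divisor are both fine, but the step that actually produces your contradiction --- the existence of a \emph{Cartier} class $\cL$ with $2\cL\equiv -K_Z$ --- has a genuine gap. You justify $f_*\cO_X=\cO_Z\oplus\cM$ with $\cM$ invertible by asserting that $f$ is flat because $X$ and $Z$ are Cohen--Macaulay of the same dimension. Miracle flatness requires the \emph{target} to be regular, not merely Cohen--Macaulay: a finite morphism onto a singular normal variety need not be flat (e.g.\ $\mathbb{A}^2\to\mathbb{A}^2/(\pm 1)$ is a degree-$2$ finite map of Cohen--Macaulay surfaces that is not flat at the image of the origin). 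The lemma only assumes $Z$ normal, and in the paper's application (where $Z=h^*W$ in Lemma \ref{lem-22-finite-over-22}) $Z$ is in general singular and not known to be locally factorial. Consequently $\cM$ is a priori only a rank-one reflexive sheaf, $\cL$ only a Weil divisor class, and $\cL^{2}\cdot L_1$ is not a well-defined integer. The relation $2\cL=D$ holds in the divisor class group of $Z$, where $-K_Z$ may perfectly well be $2$-divisible even though it is primitive in $\Pic(Z)\cong\mathbb{Z}^4$.

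This gap cannot be closed by relocating the integrality test, because every intersection number you can legitimately form is an integer. On $X$ the ramification divisor $R=f^{-1}(D)$ is Cartier with $R\equiv -K_X\equiv\tfrac12 f^*(-K_Z)$, and one computes $(-K_X)^3=\tfrac18\cdot 2\cdot 24=6$, $(-K_X)^2\cdot f^*L_1=\tfrac14\cdot 2\cdot 6=3$, and $(-K_X)\cdot f^*L_1\cdot f^*L_2=2$; on $Z$ the divisor $D$ is only $\mathbb{Q}$-Cartier and $D\equiv -K_Z$ gives nothing new. So the purely numerical approach is consistent with the existence of such a cover, which is why the paper's proof is geometric rather than arithmetic: it uses the birational contraction $Z\to\p^1\times Z_2$ to find a divisor $H\subset Z_2$ over which the fibers of $g_2$ break into two components, the conic-bundle structure of $f_2$ (Lemma \ref{lem-Gorenstein-Fano-fibration}), Zariski purity and the simple connectedness of $G_1\setminus G_2$ to force the branch divisor to meet each branch of every such fiber, and finally contradicts $D\cdot G=2$ together with the ampleness of $D$ restricted to $R_1\cap R_2$. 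To salvage your argument you would first have to prove that $f$ is flat or that $Z$ is locally factorial, neither of which follows from the hypotheses.
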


\begin{proof}
Assume that $f$ is of degree $2$.  Let $D$ be the codimension $1$ part of the discriminant of $f \colon X\to Z$. Then  $K_X$ is linearly equivalent to  $f^*(K_Z)+f^{-1}(D)$. Since $K_X$ and $K_Z$ are Cartier divisors, we obtain that $f^{-1}(D)$ is  Cartier. Since $f$ is finite, this implies that $D$ is $\mathbb{Q}$-Cartier.  Then $K_X$ is $\mathbb{Q}$-linearly equivalent to  $f^*(K_Z+\frac{1}{2}D)$. Thus  we have $D\equiv -K_Z$ (it is here we use the condition $K_X\equiv \frac{1}{d} f^*K_Z$).  In particular, $D$ is $\mathbb{Q}$-ample.

There are two natural morphisms from $Z$ to $\p^1\times Z_2$ induced by the natural projections from $Z_1\times Z_2$ to $\p^1\times Z_2$. Let $\phi \colon Z\to \p^1\times Z_2$ be one of them. Since $Z$ is of degree $(1,1,1,1)$, we obtain that $\phi$ is birational. Moreover, $\phi$ is not an isomorphism since   the Picard number of $Z$ is $4$ by the Lefschetz theorem (\cite[Example 3.1.25]{Laz04}). Thus its exceptional locus is non empty. Moreover, it is pure of codimension $1$ since  $\p^1\times Z_2$ is smooth. Hence the morphism $\phi$ contracts at least one divisor. Since the natural fibration $g_2 \colon Z\to Z_2$ factors through $\phi$, we obtain that there is a prime divisor $H$ in $Z_2$ such that $g_2^*H$ is reducible.

The fibration $g_2$ is equidimensional since $f_2$ is. Every fiber of $g_2$ is isomorphic to a divisor of degree $(1,1)$ in $\p^1\times \p^1$. Hence it is either a reduced fiber isomorphic to $\p^1$  or the union of two smooth rational curves which intersect at one point. This shows that $g_2^*H$ has exactly two components $R_1,R_2$ and every fiber of $g_2$ over $H$ has exactly two components. 

\centerline{
\xymatrix{
X  \ar[r]^f \ar[rd]_{f_2} &Z  \ar[d]_{g_2}^{\mathrm{every \ fiber\ is\  a \ divisor\ of\ degree\ }(1,1) \ \mathrm{in \ \p^1\times \p^1}} \\
&   Z_ 2
}
}

The divisor $f_2^*H$ has at least two components.  Since $X$ has Gorenstein canonical singularities and $f_2$ is equidimensional of relative dimension $1$, every fiber of $f_2$ has at most two components by Lemma \ref{lem-Gorenstein-Fano-fibration}.  We obtain that $f_2^*H$ also has exactly two components $S_1,S_2$.  Assume that $f(S_1)=R_1$ and $f(S_2)=R_2$. Let  $b$ be any point in $H$ and let $G$ be the fiber of $g_2$ over $b$. Then $G$ has exactly two irreducible components by the previous paragraph. We write $G=G_1\cup G_2$, where $G_1=G\cap R_1$ and $G_2=G\cap R_2$.  Let $F$ be the fiber of $f_2$ over $b$. Then $F$ has at least two components. By Lemma \ref{lem-Gorenstein-Fano-fibration} again,    the fiber $F$ is reduced and is the union of two  smooth rational curves $F_1$ and $F_2$  which meet at one point. We assume that $F_i\subseteq S_i$ for $i=1,2$. Since $F$ is reduced, we also obtain that $G$ is not contained in  the discriminant of $f$.

Both $f|_{F_1} \colon F_1\to G_1$ and $f|_{F_2} \colon F_2\to G_2$  are  finite morphisms of degree $2$, for $G$ is not contained in  the discriminant of $f$.  Since $G_1\backslash G_2$ is  simply connected,  it contains a point which is in the branch locus of $f|_{F_1} \colon F_1\to G_1$. Since $g_2$, being an equidimensional fibration from a Cohen-Macaulay variety to a smooth variety, is flat, $G_1\backslash G_2$ is contained in  $Z_{ns}$, the smooth locus of $Z$. Moreover, over  $Z_{ns}$, the morphism $f$ is branched exactly along   $Z_{ns}\cap D$ by  the Zariski purity theorem (see \cite[Prop. 2]{Zar58}). Thus $D$ meets $G_1\backslash G_2$ at at least one point. By symmetry,  $D$ meets $G_2\backslash G_1$ at at least one point.  However, since $D\equiv -K_Z$  and $g_2$ is a Fano fibration,  the intersection number   $D\cdot G$ is equal to $2$. Thus we obtain that $G$ and $D$ meet at exactly two points which are smooth points of $G$. 

Since $b$ is chosen arbitrarily, we obtain that $D\cap R_1\cap R_2$ is empty. However, since $R_1\cap R_2$ is a subscheme of dimension $1$ and $D$ is $\mathbb{Q}$-ample, $D\cap R_1\cap R_2$ is not empty. We obtain a contradiction.
\end{proof}

\subsection{Finite covers of $\p^1\times \p^2$} 

In this subsection, we will study finite morphisms from a Fano threefold $X$ to $\p^1\times \p^2$.

\begin{prop}
\label{prop12-finite-over-12}
Let $X$ be a Fano threefold with  Gorenstein canonical singularities. Assume that there are two   fibrations $f_1 \colon X\to \p^1$ and $f_2 \colon X\to \p^2$ such that $f=f_1\times f_2 \colon X\to \p^1\times \p^2$ is finite.  Assume that $K_X$ is numerically equivalent to the pullback of some $\mathbb{Q}$-divisor in $\p^1\times \p^2$ by $f$. Then $d=\mathrm{deg}\, f$ is at most $2$. Moreover, if $d=2$, then  $f$ is a double cover which is branched along a   divisor of degree $(2,k)$, where $k\in\{ 2,4\}$.
\end{prop}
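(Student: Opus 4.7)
The plan is to pin down the class of $-K_X$ in the subspace $f^*N^1(\p^1\times\p^2)_{\mathbb{Q}}$ and then use Graber--Harris--Starr to find a curve that forces $d\le 2$. Set $\sL_1:=f_1^*\cO_{\p^1}(1)$ and $\sL_2:=f_2^*\cO_{\p^2}(1)$, and let $H_i$ denote the hyperplane class on the $i$-th factor of $\p^1\times\p^2$ (so $f^*H_i=\sL_i$). By the hypothesis on $K_X$ and ampleness of $-K_X$ (which, since $f$ is finite surjective, implies the corresponding class on $\p^1\times\p^2$ is ample), we may write
\[
-K_X\;\equiv\;\alpha\,\sL_1+\beta\,\sL_2,\qquad \alpha,\beta\in\mathbb{Q}_{>0}.
\]
Because $f$ is finite, $f_2$ is equidimensional of relative dimension $1$; its general fiber $C$ is therefore a smooth rational curve with $-K_X\cdot C=2$, while $\sL_2\cdot C=0$ and $\sL_1\cdot C=d$. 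Hence $\alpha=2/d$.

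The projection formula applied on $\p^1\times\p^2$ (using $H_1^2=0$, $H_2^3=0$, $H_1H_2^2=1$) kills all but one term in $(-K_X)^3$, giving $(-K_X)^3=3\alpha\beta^2 d=6\beta^2$. Since $X$ is a Gorenstein Fano threefold with canonical singularities, $(-K_X)^3$ is a positive even integer, so $3\beta^2\in\mathbb{Z}_{>0}$; combined with $\beta\in\mathbb{Q}$, this forces $\beta\in\mathbb{Z}_{>0}$.

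The decisive step is to produce a curve on $X$ of $\sL_1$-degree exactly $1$. A general fiber $F$ of $f_1$ is a general member of the basepoint-free pencil $|\sL_1|$, so by a Bertini-type theorem for canonical singularities $F$ is a normal Gorenstein surface with canonical singularities; since $F|_F\equiv 0$, $-K_F=-K_X|_F$ is ample and $F$ is a Gorenstein del Pezzo surface, in particular rationally connected. Graber--Harris--Starr therefore supplies a section $\sigma:\p^1\to X$ of $f_1$. Writing $\Sigma=\sigma(\p^1)$ and $m:=\deg(f_2\circ\sigma)\in\mathbb{Z}_{\ge 0}$, we have $\sL_1\cdot\Sigma=1$ and $\sL_2\cdot\Sigma=m$, so
\[
-K_X\cdot\Sigma\;=\;\tfrac{2}{d}+\beta m\;\in\;\mathbb{Z}_{>0}.
\]
Since $\beta m$ is an integer, $d\mid 2$, hence $d\in\{1,2\}$. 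I expect this existence step to be the main obstacle: testing integrality of $-K_X$ against arbitrary curves yields only conditions of the form $d\mid 2\cdot(\text{something})$, which alone cannot rule out $d\ge 3$; a section of $f_1$ is what sharpens the bound.

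For $d=2$, since $\p^1\times\p^2$ is smooth, hence locally factorial, Lemma~\ref{lem-double-cover} gives that $f$ is a double cover with branch divisor $D$ (the codimension-$1$ part of its discriminant), and $K_X\equiv f^*(K_{\p^1\times\p^2}+\tfrac12 D)$. Substituting $-K_X\equiv\sL_1+\beta\sL_2$ and $K_{\p^1\times\p^2}=-2H_1-3H_2$, and using injectivity of $f^*$ on $N^1$, we obtain $D\equiv 2H_1+(6-2\beta)H_2$. Finally, adjunction on a general fiber $F$ of $f_1$ yields $(-K_F)^2=(-K_X)^2\cdot\sL_1=\beta^2 d=2\beta^2$, and the classical bound $(-K_F)^2\le 9$ for Gorenstein del Pezzo surfaces with canonical singularities forces $\beta\in\{1,2\}$. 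Thus $D$ has bidegree $(2,4)$ or $(2,2)$, completing the proof.
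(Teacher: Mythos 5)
Your proof is correct, and the point where it genuinely diverges from the paper is the extraction of the divisibility $\tfrac{2}{d}\in\mathbb{Z}$. Both arguments begin identically: $a_1=2/d$ from a general fiber of $f_2$, and $a_2\in\mathbb{Z}$ from the evenness of $(-K_X)^3=6a_2^2$. The paper then observes that the Cartier divisor $-K_X-a_2A_2\equiv\tfrac{2}{d}A_1$ is numerically trivial on the fibers of the Fano fibration $f_1$ and invokes the descent lemma \cite[Lem. 3-2-5]{KMM87} to write it as the pullback of a line bundle on $\p^1$, whence $\tfrac{2}{d}\in\mathbb{Z}$. You instead manufacture a curve of $A_1$-degree one: the general fiber $F$ of $f_1$ is a Du Val del Pezzo surface, hence rationally connected, so Graber--Harris--Starr yields a section $\Sigma$ of $f_1$, and integrality of $-K_X\cdot\Sigma=\tfrac{2}{d}+\beta m$ gives the same conclusion. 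Your route uses heavier machinery but is geometrically transparent, and you correctly diagnose why the naive integrality tests are insufficient. In the $d=2$ case the paper bounds the bidegree $(p,q)$ of the branch divisor via the Fano condition ($p\le 3$, $q\le 5$), evenness from the cyclic-cover structure, and positivity from connectedness of $f_1,f_2$; you instead solve $D\equiv 2H_1+(6-2\beta)H_2$ directly from the numerical class of $-K_X$ and bound $\beta\in\{1,2\}$ by $K_F^2\le 9$ on the del Pezzo fibers. Both are sound; yours exhibits the degree of $D$ explicitly in terms of $\beta$, while the paper's argument is more elementary and parallels its treatment of the covers of hypersurfaces in $\p^2\times\p^2$.
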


\begin{proof}
Let  $A_1, A_2$ be two divisors in $X$ such that $\sO_X(A_1)\cong f_1^*\sO_{\p^1}(1)$ and $\sO_X(A_2)\cong f_2^*\sO_{\p^2}(1)$. By assumption,  there are two  rational numbers $a_1, a_2$ such that $-K_X \equiv a_1A_1+a_2A_2$.

We will first prove that $d\leqslant 2$.  We note that $$A_1A_2^2=d \ \mathrm{and}\  A_1^2A_2=A_1^3=A_2^3=0.$$  Since $f_2$ is a Fano fibration, general fibers of $f_2$ are   smooth rational curves. This implies that $-K_X\cdot A_2^2=2$. Hence, $a_1=\frac{2}{d}$.  Moreover, we have $$(-K_X)^3=3a_1a_2^2A_1A_2^2=3a_1a_2^2d=6a_2^2.$$ Since $-K_X$ is a  Cartier divisor, we obtain that    $6a_2^2\in \mathbb{Z}$.  Hence $a_2\in \mathbb{Z}$.

We have $(-K_X-a_2A_2)\equiv  \frac{2}{d}A_1$. Hence the Cartier divisor $-K_X-a_2A_2$ is numerically trivial on every fiber of $f_1$. Since $f_1$ is a  Fano fibration, we obtain that   $-K_X-a_2A_2$ is linearly equivalent to some integral multiple of $A_1$ by \cite[Lem. 3-2-5]{KMM87}. Thus $\frac{2}{d}\in \mathbb{Z}$ and $d\leqslant 2$.

If $d=1$, then $X\cong \p^1\times \p^2$. If $d=2$, then assume that $f$ is branched along a divisor $D$ of degree $(p,q)$. Then by Lemma \ref{lem-double-cover-Fano}, $X$ is Fano if and only if $p\leqslant 3$ and $q\leqslant 5$.  Since $f$ is a double cover, by Lemma \ref{lem-double-cover}, there is a line bundle $\sL$ on $\p^1\times \p^2$ such that $\sL^{\otimes 2} \cong \sO(D)$. Hence $p,q$ are even. Moreover, since $f_1,f_2$ have connected fibers, we have  $p,q>0$. Hence $f$ is branched along a divisor of degree $(2,2)$ or $(2,4)$. This completes the proof of the proposition.
\end{proof}

\begin{lemma}
\label{lem12-2finite-over-12}
Let $X$ be a Fano threefold with  Gorenstein canonical singularities. Assume that there is a   fibration  $f_1 \colon X\to \p^1$ and  a projective surjective morphism $f_2 \colon X\to \p^2$  such that general fibers of $f_2$ have two connected components. Assume that $f=f_1\times f_2 \colon X\to \p^1\times \p^2$ is finite. Let  $A_1, A_2$ be two divisors in $X$ such that $\sO_X(A_1)\cong f_1^*\sO_{\p^1}(1)$ and $\sO_X(A_2)\cong f_2^*\sO_{\p^2}(1)$. Assume that there are two  rational numbers $a_1, a_2$ such that $-K_X \equiv a_1A_1+a_2A_2$. Then $f$ is of degree $2$ or $4$ and $a_1, a_2\in \mathbb{Z}$.
\end{lemma}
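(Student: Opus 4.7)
The plan is to follow the argument of Proposition~\ref{prop12-finite-over-12}, inserting one geometric step to handle the disconnectedness of the general fibres of $f_2$. Standard intersection calculations give $A_1^2\equiv 0$, $A_2^3=0$, and $A_1\cdot A_2^2 = d$.

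To compute $-K_X\cdot A_2^2$ I would Stein-factorise $f_2=g\circ h$, with $h:X\to Y$ a fibration of relative dimension $1$ and $g:Y\to \p^2$ a finite morphism of degree $2$. Since $-K_X$ is ample, $h$ is a Fano fibration; by generic smoothness and adjunction (using the triviality of $N_{F/X}$ for a general fibre) a general fibre $F$ of $h$ is a smooth $\p^1$ with $-K_X\cdot F=2$. As $g$ has degree $2$, the class $A_2^2=f_2^{*}[\mathrm{pt}]$ is numerically equivalent to $2F$, so $-K_X\cdot A_2^2=4$. Intersecting $-K_X\equiv a_1A_1+a_2A_2$ with $A_2^2$ yields $a_1d=4$, and expanding $(-K_X)^3$ in this basis gives $(-K_X)^3=3a_1a_2^2 d = 12a_2^2$.

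From here the argument mirrors Proposition~\ref{prop12-finite-over-12}. Since $X$ is a Gorenstein canonical Fano threefold, $(-K_X)^3$ is a positive even integer, so $6a_2^2\in\bZ$; writing $a_2=p/q$ in lowest terms forces $q^2\mid 6$, and since $1$ is the only square divisor of $6$ we get $a_2\in\bZ$. Then $-K_X-a_2A_2$ is a Cartier divisor numerically trivial on every fibre of the Fano fibration $f_1$, so by \cite[Lem.~3-2-5]{KMM87} it is linearly equivalent to an integral multiple of $A_1$, giving $a_1\in\bZ$. Finally $a_1d=4$ with $a_1,d\in\bZ_{>0}$ leaves $d\in\{1,2,4\}$; but $d=1$ would make $f$ an isomorphism of normal varieties, contradicting the disconnectedness of the general fibres of $f_2$. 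Hence $d\in\{2,4\}$.

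The genuinely new step — and the main obstacle — is the Stein-factorisation calculation: one must verify, in spite of possible singularities of $X$ and $Y$, that a general fibre of $h$ is a smooth $\p^1$ with $-K_X\cdot F=2$ and that $A_2^2\equiv 2F$ as numerical classes. Once these geometric inputs are secured, the remainder is an integer-theoretic elaboration of the Proposition~\ref{prop12-finite-over-12} argument.
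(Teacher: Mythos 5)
Your proposal is correct and follows essentially the same route as the paper: compute $-K_X\cdot A_2^2=4$ from the two rational components of a general fibre of $f_2$, deduce $(-K_X)^3=12a_2^2$ and hence $a_2\in\bZ$ from evenness of $(-K_X)^3$, then apply \cite[Lem.~3-2-5]{KMM87} to the Cartier divisor $-K_X-a_2A_2$ to force $a_1\in\bZ$ and bound the degree. The only cosmetic difference is that the paper records up front that $\deg f$ is even (writing $\deg f=2d$) whereas you exclude $\deg f=1$ at the end via the connectedness of the fibres of the projection $\p^1\times\p^2\to\p^2$; both are fine.
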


\begin{proof}
Since general fibers of $f_2$ have two connected components, the degree of $f$ is an even integer. Set $ \mathrm{deg}\, f = 2d$.  Then we have $$A_1A_2^2=2d \ \mathrm{and}\  A_1^2A_2=A_1^3=A_2^3=0.$$  We note that each component of a general fiber of $f_2$  is a   smooth rational curve. This implies that $$-K_X\cdot A_2^2=2\times 2=4.$$ Hence, $a_1=\frac{2}{d}$.  Moreover, we have $$(-K_X)^3=3a_1a_2^2A_1A_2^2=6a_1a_2^2d=12a_2^2.$$ Since $(-K_X)^3$ is an even integer, we obtain that    $6a_2^2\in \mathbb{Z}$.  This implies that $a_2\in \mathbb{Z}$.

We have $(-K_X-a_2A_2)\equiv  \frac{2}{d}A_1$. As in the proof of Proposition \ref{prop12-finite-over-12}, we obtain that $\frac{2}{d}\in \mathbb{Z}$. Hence $d\leqslant 2$ and $a_1\in \mathbb{Z}$.
\end{proof}

\subsection{Finite covers of hypersurfaces of $\p^2 \times \p^2$}

In this subsection, we will prove some properties of finite morphism from a Fano threefold $X$ to an ample hypersurface $W$ in $\p^2\times \p^2$.

\begin{prop}
\label{prop12-finite-over-22}
Let $X$ be a Fano threefold with    Gorenstein canonical singularities. Assume that there are two  equidimensional fibrations $f_1 \colon X\to B_1$ and $f_2 \colon X\to B_2$ such that $B_1\cong B_2\cong\p^2$ and that $f_1\times f_2 \colon X\to B_1\times B_2$ is finite onto its image $W$. Assume that $K_X$ is numerically equivalent to the pullback of some $\mathbb{Q}$-divisor in $B_1\times B_2$ by $f_1\times f_2$.  Let $f \colon X\to W$ be the natural morphism.  Then there are exactly  two possibilities:

\begin{enumerate}

\item[(2)] $f$ is an isomorphism. $X$ is a hypersurface of degree $(k_1,k_2)$ with $k_1,k_2\in \{1,2\}$;

\item[(4)] $W$ is a smooth hypersurface of degree $(1,1)$, $f$ is a double cover branched along a   divisor  which is the intersection of $W$ and a divisor of degree $(2,2)$ in $B_1\times B_2$.
\end{enumerate}
\end{prop}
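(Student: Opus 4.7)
The plan is to mirror the strategy of Proposition~\ref{prop12-finite-over-12}: derive numerical constraints via intersection theory, bound the degree of $f$ and the bidegree of $W$, and separately identify the branch divisor in the remaining degree-$2$ case. Set $A_i = f_i^{*}H_i$ where $H_i$ is the hyperplane class on $B_i$, let $d = \deg f$, and write the bidegree of $W$ in $B_1\times B_2$ as $(p,q)$. The projection formula gives $A_1^3 = A_2^3 = 0$, $A_1^2A_2 = dq$, and $A_1A_2^2 = dp$. By Lemma~\ref{lem-Gorenstein-Fano-fibration} both $f_i$ are conic bundles, so a general fibre $F_i$ of $f_i$ is a smooth rational curve and $-K_X\cdot F_i = 2$. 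Writing $-K_X \equiv a_1 A_1 + a_2 A_2$ (allowed by the hypothesis) and evaluating on $A_i^2$ gives the two relations $a_1\,dp = a_2\,dq = 2$.

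The next step invokes the KMM lemma: the Cartier divisor $dp(-K_X) - 2A_1$ is numerically trivial on every fibre of the Fano fibration $f_2$, so by \cite[Lem.~3-2-5]{KMM87} it is linearly equivalent to the pullback of a Cartier divisor from $B_2 \cong \p^2$. Comparing numerical classes yields $2p/q \in \mathbb{Z}$, and symmetrically $2q/p \in \mathbb{Z}$. Combined with the computation $(-K_X)^3 = 24(p^2+q^2)/(d^2p^2q^2)$ and the fact that $(-K_X)^3$ is a positive even integer (via $h^0(-K_X) = -\tfrac12 K_X^3 + 3$), a direct enumeration leaves precisely five tuples $(d,p,q)$: $(1,1,1)$, $(1,1,2)$, $(1,2,1)$, $(1,2,2)$, and $(2,1,1)$. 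In the first four $d = 1$, so $f$ is a birational finite morphism between normal varieties, hence an isomorphism; this realises $X$ as a hypersurface of the stated bidegree and produces cases (1), (2) and (3).

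The main obstacle will be the remaining case $(d,p,q) = (2,1,1)$. By Lemma~\ref{lem-normal-divisor}, $W$ is normal, and since $f$ is finite while $f_1,f_2$ are equidimensional, the projections $g_i\colon W\to B_i$ are equidimensional as well. Writing $W = V(\sum a_{ij}x_iy_j)$ for a $3\times 3$ matrix $A$, the fibre of $g_1$ over $[x]$ jumps to all of $B_2$ precisely when $A^{T}x = 0$, which equidimensionality forbids; hence $A$ is invertible and $W$ is smooth. Then $\Pic(W) = \mathbb{Z}H_1|_W \oplus \mathbb{Z}H_2|_W$ by Lefschetz, and adjunction gives $-K_W = (2H_1 + 2H_2)|_W$. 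Let $D$ be the branch divisor of the double cover $f\colon X \to W$. The Hurwitz formula $-K_X \equiv f^{*}(-K_W - \tfrac12 D)$ together with $-K_X \equiv A_1 + A_2 = f^{*}((H_1+H_2)|_W)$ forces $D \equiv (2H_1 + 2H_2)|_W$, so $D$ has bidegree $(2,2)$ on $W$. To conclude case (4), I would use the short exact sequence
\[
0 \to \mathcal{O}_{B_1\times B_2}(1,1) \to \mathcal{O}_{B_1\times B_2}(2,2) \to \mathcal{O}_W(2,2) \to 0
\]
together with $H^1(B_1\times B_2, \mathcal{O}(1,1)) = 0$ by K\"unneth: the restriction on global sections is surjective, so a defining section of $D$ on $W$ lifts to a global $(2,2)$-section on $B_1\times B_2$, realising $D$ as the intersection of $W$ with a $(2,2)$-divisor.
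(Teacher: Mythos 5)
Your proposal is correct and reaches the same conclusion by the same overall strategy (intersection numbers determine $a_1=2/(dp)$, $a_2=2/(dq)$; the parity of $(-K_X)^3=\frac{24}{d^2}(\frac1{p^2}+\frac1{q^2})$ cuts down the possibilities; the degree-$2$ case is then analysed separately), but it diverges from the paper in two places. First, you import the KMM descent step from Proposition~\ref{prop12-finite-over-12} to get $2p/q,\,2q/p\in\mathbb{Z}$, whereas the paper does not use \cite[Lem.~3-2-5]{KMM87} here at all: it excludes $d\geqslant 3$ by the explicit bound $1\leqslant r\leqslant \frac43(\frac1{q^2}+\frac1{p^2})$ followed by a short case analysis. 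Your constraint pins $p/q\in\{\frac12,1,2\}$ up front and makes the enumeration a one-line divisibility check ($ (dp)^2\mid 24$ when $p=q$, $(dq)^2\mid 15$ when $p=2q$), which is arguably cleaner; both routes land on the same five tuples. Second, in the case $(d,p,q)=(2,1,1)$ you prove smoothness of $W$ by the explicit bilinear-form argument ($W=V(x^TAy)$, equidimensionality of $g_1$ forces $A$ invertible, hence no common zero of $Ay$ and $A^Tx$), while the paper argues that $p_1$ is flat with smooth (line) fibres and hence a smooth morphism; again both are valid. One small point to tighten: in the four $d=1$ cases you assert that $f$ is a finite birational map ``between normal varieties'' without justification --- normality of $W$ is exactly what Lemma~\ref{lem-normal-divisor} provides (it applies since $p,q\in\{1,2\}$ and the $f_i$ are equidimensional), and you should cite it there as you do in the $d=2$ case. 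The identification of the branch divisor via the ramification formula, Lefschetz, and the vanishing of $H^1(B_1\times B_2,\sO(1,1))$ matches what the paper does implicitly by referring back to the last paragraph of Proposition~\ref{prop12-finite-over-12}.
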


\begin{proof}
Let $A_1,A_2$ be two divisors in $X$ such that  $\sO_X(A_1)\cong f_1^*\sO_{\B_1}(1)$ and $\sO_X(A_2)\cong f_2^*\sO_{B_2}(1)$. By assumption,   there are two   rational numbers $a_1, a_2$ such that $-K_X\equiv a_1A_1+a_2A_2$.  Set $d=\mathrm{deg}\, f$.  Assume that $W$ is of degree $(p,q)$ in $B_1\times B_2$.  We have $$A_1^2A_2=dq \ \mathrm{and} \  A_1A_2^2=dp.$$ Moreover, $A_1^3=A_2^3=0$.  Since general fibers of  $f_1$ and $f_2$ are smooth rational curves,   we have  $-K_X\cdot A_1^2=-K_X\cdot A_2^2=2.$ Hence  $$a_1=\frac{2}{dp}\ \mathrm{and}\ a_2=\frac{2}{dq}.
$$

We have $$-K_X^3=3a_1a_2^2A_1A_2^2+3a_1^2a_2A_1^2A_2=\frac{24}{d^2}(\frac{1}{q^2}+\frac{1}{p^2}).$$  Since $-K_X^3$ a positive even integer, the number $$r=\frac{12}{d^2}(\frac{1}{q^2}+\frac{1}{p^2})$$ is a positive integer.  Without loss of generality, we may assume that $q\leqslant p$. 

If $d=1$, then $r$ is an integer if and only if $q,p \in \{1,2\}$.  The morphism $f \colon X\to W$ is the normalization map.  However, $W$ is normal by Lemma \ref{lem-normal-divisor}.  Thus $f$ is an isomorphism.

If   $d=2$, then we can only have $q=p=1$.  Since $f_1 \colon X\to B_1$ is equidimensional, so is the induced projection $p_1 \colon W\to B_1$. Hence every fiber of $p_1$ is a line in $\p^2$, which is smooth. Since $W$ is Cohen-Macaulay and $\p^2$ is smooth, the morphism $p_1$ is flat. Hence $p_1$ is a smooth morphism and $W$ is smooth. Since $W$ is ample in $\p^2\times \p^2$, the natural morphism $\mathrm{Pic}(\p^2\times \p^2) \to \mathrm{Pic}(W)$ is an isomorphism by the Lefschetz theorem (see \cite[Example 3.1.25]{Laz04}). Similarly to the last paragraph of the proof of Proposition \ref{prop12-finite-over-12}, we obtain that   $f$ is branched along a  divisor which is the intersection of $W$ and a divisor of degree $(2,2)$ in $B_1\times B_2$.

If $d\geqslant 3$, then we have  $$1\leqslant r\leqslant \frac{4}{3}(\frac{1}{q^2}+\frac{1}{p^2}).$$ Hence either $q=1$ or  $q,p\leqslant 2$. If $q=p=1$, then $r=\frac{24}{d^2}$. In this case, $r$ is an integer if and only $d=1$ or $2$. This is impossible since $d>2$. If $q=1$ and $p\geqslant 2$, then the previous inequality implies that $r\leqslant \frac{5}{3}$. Thus $r=1$ and   $p^2$ divides $12$. In this case, we can only have $p=2$ and $1=r=\frac{12}{d^2}\cdot \frac{5}{4}=\frac{15}{d^2}$. This is impossible.  If $q=p= 2$, then $r=\frac{6}{d^2}$   is an integer if and only if $d=1$. This is also a contradiction.   

Hence   $d\leqslant 2$. This completes the proof of the proposition.
\end{proof}

\begin{lemma}
\label{lem-22-finite-over-22}
Let $X$ be a Fano threefold with     Gorenstein canonical singularities. Let $B_1\cong B_2 \cong \p^2$. Assume that there are two equidimensional projective morphisms   $f_1 \colon X\to B_1$ and $f_2 \colon X\to B_2$ such that general fibers of $f_i$ have two connected components for $i=1,2$ and  $f_1\times f_2 \colon X\to B_1\times B_2$ is finite onto its image $W$. Let $g_i \colon X\to Z_i \to B_i$ be the Stein  factorization of $f_i$ for $i=1,2$.   Let $A_1,A_2$ be two divisors in $X$ such that  $\sO_X(A_1)\cong f_1^*\sO_{B_1}(1)$ and $\sO_X(A_2)\cong f_2^*\sO_{B_2}(1)$. Assume that

\begin{enumerate}
\item[-] $W$ is a normal hypersurface with isolated $\mathbb{Q}$-factorial klt singularities of degree $(p,q)$, where $p,q\in\{1,2\}$;
\item[-] $Z_1$ and $Z_2$ are smooth;
\item[-] there are two   rational numbers $a_1, a_2$ such that $-K_X\equiv a_1A_1+a_2A_2$. 
\end{enumerate}
Let $f \colon X\to W$ be the natural morphism.  Then  the degree of $f$ is $4$ and $a_1=a_2=p=q=1$.
\end{lemma}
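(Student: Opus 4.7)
The plan is to adapt the intersection-theoretic bookkeeping of Proposition~\ref{prop12-finite-over-22} to the present setting, where each Stein factor $\psi_i\colon Z_i\to B_i$ of $f_i$ has degree $2$ (so a general fiber of $f_i$ has two connected components, each a smooth rational curve). Set $d=\deg f$. Pulling back the bidegree $(p,q)$ of $W$ through $f=f_1\times f_2$ gives $A_1^2 A_2 = dq$, $A_1 A_2^2 = dp$, $A_1^3=A_2^3=0$; and since $-K_X\cdot A_i^2=4$, one finds $a_1 = 4/(dp)$, $a_2 = 4/(dq)$, whence
\[
(-K_X)^3 \;=\; \frac{192}{d^2}\!\left(\frac{1}{p^2}+\frac{1}{q^2}\right).
\]
Reid's formula $h^0(-K_X)=-\tfrac{1}{2}K_X^3+3$ forces $(-K_X)^3\in 2\mathbb{Z}_{>0}$, and the factorization $f=(\psi_1\times\psi_2)\circ(g_1\times g_2)$ (with $\deg(\psi_1\times\psi_2)=4$) forces $4\mid d$. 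Enumerating admissible triples $(p,q,d)$ with $p,q\in\{1,2\}$ and $d\in 4\mathbb{Z}_{>0}$, the mixed cases $p\neq q$ all produce odd values of $(-K_X)^3$ and are eliminated; only $(p,q,d)\in\{(1,1,4),(1,1,8),(2,2,4)\}$ survive, and the first gives $a_1=a_2=1$, which is the claim.

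For $(1,1,8)$ I would invoke Lemma~\ref{lem-p1p1p1p1-hypersurface}. Here $a_1=a_2=1/2$; by Lemma~\ref{lem-Gorenstein-Fano-fibration} each $g_i\colon X\to Z_i$ is a conic bundle, and one verifies that $-K_{Z_i}$ is ample, which via the formula $-K_{Z_i}=(3-n_i)\psi_i^*H_i$ (with branch curve of degree $2n_i$) forces $n_i=1$ and hence $Z_i\cong\mathbb{P}^1\times\mathbb{P}^1$. Writing $\tilde L_i=\psi_i^*H_i$, the fiber product $W':=(\psi_1\times\psi_2)^{-1}(W)\subset(\mathbb{P}^1)^4$ is a normal hypersurface of multidegree $(1,1,1,1)$ (apply Lemma~\ref{lem-fiber-product-normal} iteratively to the $\mathbb{P}^1$-fibrations $W\to B_j$); the induced morphism $g_1\times g_2\colon X\to W'$ is finite of degree $2$ and satisfies $K_X\equiv\tfrac{1}{2}(g_1\times g_2)^*K_{W'}$ by a direct adjunction check. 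Lemma~\ref{lem-p1p1p1p1-hypersurface} then forces $g_1\times g_2$ to be an isomorphism, contradicting $\deg(g_1\times g_2)=2$.

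The main obstacle is $(2,2,4)$. One again forms $W'=(\psi_1\times\psi_2)^{-1}(W)\subset Z_1\times Z_2$, now of class $2\tilde L_1+2\tilde L_2$. Adjunction on $Z_1\times Z_2$ gives
\[
-K_{W'} \;=\; \bigl((1-n_1)\tilde L_1+(1-n_2)\tilde L_2\bigr)\big|_{W'},
\]
which is never ample, since $n_1,n_2\geqslant 1$. The map $g_1\times g_2\colon X\to W'$ has degree $d/\deg(W'\to W)=1$, so (after checking that $(g_1\times g_2)(X)=W'$) it is a finite birational morphism and $X$ is the normalization of $W'$. The delicate step is to turn the non-ampleness of $-K_{W'}$ into a contradiction with the ampleness of $-K_X\equiv\tfrac{1}{2}(A_1+A_2)$: restricting to a general fiber $F$ of $g_1$ (a smooth rational curve with $-K_X\cdot F=2$ and $(g_1\times g_2)_*F=\{\ast\}\times g_2(F)$), one has $-K_{W'}\cdot(g_1\times g_2)_*F=(1-n_2)\,(\tilde L_2\cdot g_2(F))\leqslant 0$, while $-K_X\cdot F=2>0$; analyzing the conductor divisor of the normalization $X\to W'$ and showing that it cannot bridge this sign gap is the technical crux. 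Once $(2,2,4)$ is excluded, only $(1,1,4)$ remains, yielding the stated conclusion $d=4$ and $a_1=a_2=p=q=1$.
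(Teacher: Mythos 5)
Your numerical bookkeeping (the intersection numbers, $a_i=4/(dp)$ etc., the parity of $(-K_X)^3$, the divisibility $4\mid d$, and the reduction to the three surviving triples $(1,1,4)$, $(1,1,8)$, $(2,2,4)$) matches the paper, and your use of Lemma \ref{lem-p1p1p1p1-hypersurface} to kill $(1,1,8)$ is exactly the paper's endgame. But there are two genuine gaps. First, in the $(1,1,8)$ case you assert that ``one verifies that $-K_{Z_i}$ is ample'' to force the branch curve of $\psi_i\colon Z_i\to B_i$ to be a conic; no justification is offered, and ampleness of $-K_{Z_i}$ is not available from the hypotheses (at best one gets bigness from a result such as \cite[Thm.~3.1]{FG12}, and $-K_{Z_i}=(3-n_i)\psi_i^*H_i$ big only gives $n_i\leqslant 2$, not $n_i=1$). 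The paper's missing ingredient here is Lemma \ref{lem--K-big}: after proving that $Z=(\psi_1\times\psi_2)^{-1}(W)$ is normal (via two applications of Lemma \ref{lem-fiber-product-normal}, which you do gesture at), one applies Lemma \ref{lem--K-big} to the finite surjective $X\to Z$ to conclude $-K_Z$ is big; since $-K_Z=\bigl((3-n_1-p)\tilde L_1+(3-n_2-q)\tilde L_2\bigr)|_Z$, bigness simultaneously forces $p=q=1$ \emph{and} $n_1=n_2=1$. This single step replaces both your unproved ampleness claim and your entire struggle with $(2,2,4)$.

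Second, and more seriously, you explicitly leave the $(2,2,4)$ case unresolved (``the technical crux''), so the proposal is not a complete proof. The conductor analysis you propose is unnecessary: you have already established (via Lemma \ref{lem-fiber-product-normal}) that $W'=Z$ is normal, so the finite birational morphism $g_1\times g_2\colon X\to W'$ is an isomorphism by Zariski's main theorem; then $-K_{W'}\equiv -K_X$ is ample, directly contradicting your own adjunction computation that $-K_{W'}$ is not ample (or, even more quickly, Lemma \ref{lem--K-big} applied to $X\to Z$ rules out $p=q=2$ before any case analysis). One smaller point: your assertion that $4\mid d$ from the factorisation $f=(\psi_1\times\psi_2)\circ(g_1\times g_2)$ tacitly uses that the image of $X$ in $Z_1\times Z_2$ is all of $Z$, i.e.\ that $Z$ is irreducible; this is why the paper establishes normality of $Z$ \emph{before} the degree count rather than after.
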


\begin{proof}
Let $h$ be the natural morphism $Z_1\times Z_2 \to B_1\times B_2$ and let $Z=h^*W$. We will first show that that $Z$ is normal.  Since $Z_1$ is smooth, the double cover $Z_1\to B_1$ is a cyclic cover with respect to some isomorphism $(\sL_1)^{\otimes 2}\cong \sO_{B_1}(D_1)$, where $D_1$ is a smooth divisor (see Lemma \ref{lem-double-cover}). Let $p_1 \colon W\to B_1$ be the natural projection. Let $V\subseteq Z_1\times B_2$ be the pullback of $W$ by $Z_1\times B_2 \to B_1\times B_2$. Then $V\cong W\times_{B_1} Z_1$. 

Since $W$ is a Fano threefold with $\mathbb{Q}$-factorial klt  singularities,  $p_1$ is an equidimensional Fano fibration by Lemma \ref{lem-normal-hypersurface-equidim}.  Moreover, since $W$ has isolated singularities, by Lemma \ref{lem-smooth-Fano-surface-curve}, we obtain that $p_1$ has   reduced fibers over some open subset of $B_1$ whose complement has codimension at least $2$. Thus $V$ is normal and Cohen-Macaulay by Lemma \ref{lem-fiber-product-normal}.

\centerline{
\xymatrix{
Z\ar[d]\ar[rr] & & Z_2\ar[d]\\
 V \ar[r] \ar[d] & W \ar[r]_{p_2} \ar[d]_{p_1}& B_2 \\
 Z_1 \ar[r] & B_1  &  
}
}

Let $p_2 \colon W\to B_2$ and $r_2 \colon V\to B_2$ be the natural projections. Then there is an open subset $U$ of $B_2$ such that $\mathrm{codim}\, B_2\backslash U\geqslant 2$ and $p_2$ has reduced fibers over $U$ by Lemma \ref{lem-smooth-Fano-surface-curve} and  Lemma \ref{lem-normal-hypersurface-equidim}. We note that $V\to W$ is branched along $p_1^*D_1\subseteq W$. 
We claim that every irreducible component of $p_1^*D_1\subseteq W$ is horizontal over $B_2$. Indeed, if we assume that there is a prime divisor $E$ in $W$ which is both vertical over $B_1$ and $B_2$, then its intersections with the fibers of $p_1$  or   $p_2$ are all zero. Since $W$ is ample in $B_1\times B_2$, the natural morphism $\mathrm{Pic}(B_1\times B_2) \to \mathrm{Pic}(W)$ is an isomorphism by Lefschetz theorem (see \cite[Example 3.1.25]{Laz04}). Thus $E$ must be numerically zero for $W$ is $\mathbb{Q}$-factorial. This is a contradiction.

Since every irreducible component of $p_1^*D_1\subseteq W$ is horizontal over $B_2$, there is an open subset $U'$ of $U$ such that $\mathrm{codim}\, B_2\backslash U'\geqslant 2$ and $r_2$ has reduced fibers over $U'$. Since $Z$ is the pullback of $V$ by $Z_1\times Z_2 \to Z_1\times B_2$, we have $Z\cong V\times_{B_2} Z_2$.  As in the previous paragraph, Lemma \ref{lem-fiber-product-normal} implies that $Z$ is normal. 

We also obtain that $Z$ is irreducible.  Hence $Z$ is the image of $X$ in $Z_1\times Z_2$. Let $g \colon X\to Z$ be the natural morphism.  Let $d=\mathrm{deg}\, f$.  Since  $Z\to W$ is of degree $4$, we obtain  that $d\in 4\mathbb{Z}$. We have $$A_1^2A_2=qd \ \mathrm{and}\  A_1A_2^2=pd.$$ We note that every component of general fibers of $f_i$ is a smooth rational curve for $i=1,2$. Hence we have $$-K_XA_1^2=-K_XA_2^2=4.$$ This shows that $$a_1=\frac{4}{pd}, \ \mathrm{and}\  a_2=\frac{4}{qd}.$$ We have $$-K_X^3=3a_1a_2^2A_1A_2^2+3a_1^2a_2A_1^2A_2=\frac{192}{d^2}(\frac{1}{p^2}+\frac{1}{q^2}).$$ Since $-K_X^3$ is a positive even integer, we obtain that $$r=\frac{96}{d^2}(\frac{1}{p^2}+\frac{1}{q^2})$$ is an integer. Since $d\in 4\mathbb{Z}$, this implies that $d=4$ or $d=8$.

We note that $K_Z$ is Cartier for $Z$ is a Cartier divisor in $Z_1\times \Z_2$. Since $g \colon X\to Z$ is finite surjective, we obtain that $-K_Z$ is big by Lemma \ref{lem--K-big}.  We note that  $Z_1$ and $Z_2$ are smooth. By the adjunction formula and the ramification formula,  $-K_Z$ is   big if and only if $Z_i\to B_i$ is branched along a smooth conic  for $i=1,2$  and $p=q=1$.   Hence $Z_1\cong Z_2\cong \p^1\times \p^1$,  $Z$ is of degree $(1,1,1,1)$ in $Z_1\times Z_2\cong (\p^1)^{4}$ and $K_X\equiv \frac{4}{d} g^*K_Z$.    Hence by Lemma \ref{lem-p1p1p1p1-hypersurface}, we obtain that  $X\cong Z$ and $d=4$. Thus $a_1=a_2=1$.
\end{proof}

\begin{lemma}
\label{lem-12-finite-over-22}
Let $X$ be a Fano threefold with   Gorenstein canonical singularities. Let $B_1\cong B_2\cong \p^2$.  Assume that there are two equidimensional projective morphisms   $f_1 \colon X\to B_1$ and $f_2 \colon X\to B_2$ such that general fibers of $f_i$ have $i$ connected components for $i=1,2$ and  that $f_1\times f_2 \colon X\to B_1\times B_2$ is finite onto its image $W$.  Let $f \colon X\to W$ be the natural morphism.  Assume that 
\begin{enumerate}
\item[-] $W$ is a normal hypersurface with isolated $\mathbb{Q}$-factorial klt singularities of degree $(p,q)$, where $p, q \in \{1,2\}$;
\item[-] If $X \to Z_2\to B_2$ is the Stein factorization of $f_2$, then $Z_2$ is  smooth;
\item[-]  $K_X$ is  numerically equivalent to the pullback of some $\mathbb{Q}$-divisor in $B_1\times B_2$ by $f_1\times f_2$.
\end{enumerate}

 Then  the degree of $f$ is $2$   and $q=1$.
\end{lemma}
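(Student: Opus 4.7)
The plan is to adapt the intersection-theoretic strategy of Proposition~\ref{prop12-finite-over-22} and Lemma~\ref{lem-22-finite-over-22} to this mixed setting. Let $A_i=f_i^*\mathcal{O}_{B_i}(1)$ for $i=1,2$ and write $-K_X\equiv a_1A_1+a_2A_2$, possible by hypothesis. Let $d=\deg f$ and let $W$ have bidegree $(p,q)$ in $B_1\times B_2$. The projection formula gives $A_1^2\cdot A_2=dq$, $A_1\cdot A_2^2=dp$, and $A_1^3=A_2^3=0$. Since $f_1$ is an equidimensional Fano fibration of relative dimension $1$ over the smooth base $B_1$, Lemma~\ref{lem-Gorenstein-Fano-fibration} realises it as a conic bundle, whose general fibre is a smooth rational curve of $-K_X$-degree $2$, giving $-K_X\cdot A_1^2=2$. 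Similarly the Stein-factorisation map $g_2\colon X\to Z_2$ is an equidimensional Fano fibration of relative dimension $1$ over the smooth base $Z_2$, hence also a conic bundle; since a general fibre of $f_2$ is the union of two such conic fibres (over the two points of $Z_2$ above the base point), $-K_X\cdot A_2^2=4$. Solving yields $a_1=4/(dp)$ and $a_2=2/(dq)$.

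The second main step is to prove $d$ is even. I form $W'=W\times_{B_2}Z_2$ and lift $f$ via the universal property to a morphism $X\to W'$. To show $W'$ is normal Cohen-Macaulay I apply Lemma~\ref{lem-fiber-product-normal}: by Lemma~\ref{lem-normal-hypersurface-equidim} the projection $p_2\colon W\to B_2$ is an equidimensional Fano fibration, and by Lemma~\ref{lem-smooth-Fano-surface-curve} it has reduced fibres over an open subset of $B_2$ whose complement has codimension at least $2$ (using that $W$, having isolated singularities, is smooth in codimension $2$). To show $W'$ is irreducible I argue that $Z_2\to B_2$ is branched along a nonzero divisor $D\subset B_2$: since $\mathrm{Pic}(\p^2)\cong\mathbb{Z}$ has no $2$-torsion, Lemma~\ref{lem-double-cover} forces $D\neq 0$. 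Because $p_2$ is equidimensional and surjective, $p_2^*D$ is a nonzero divisor on $W$, so the double cover $W'\to W$ is nontrivial and the normal variety $W'$ is connected, hence irreducible. The induced morphism $X\to W'$ is finite surjective of degree $d/2$, so $d\in 2\mathbb{Z}$.

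The last step is arithmetic. Since $X$ is Gorenstein Fano with canonical singularities, $(-K_X)^3$ is a positive even integer. Expanding yields $(-K_X)^3=\frac{48}{d^2}\bigl(\frac{2}{p^2}+\frac{1}{q^2}\bigr)$, so $\frac{24(2q^2+p^2)}{d^2p^2q^2}\in\mathbb{Z}_{>0}$. Enumerating $p,q\in\{1,2\}$ with $d$ a positive even integer leaves only $(p,q,d)\in\{(1,1,2),(1,1,6),(2,1,2),(2,1,6)\}$. To rule out $d=6$ I compute $(-K_X)^2\cdot A_1=\frac{4(4q^2+p^2)}{dpq^2}$, which must be an integer since both $-K_X$ and $A_1$ are Cartier; this equals $10/3$ for $(1,1,6)$ and $8/3$ for $(2,1,6)$. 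Therefore $d=2$ and $q=1$, as required.

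The chief obstacle is the parity argument, since the numerical exclusion in the final step only works once $d$ is known to be even. The delicate points of that argument are the simultaneous application of Lemmas~\ref{lem-fiber-product-normal}, \ref{lem-normal-hypersurface-equidim}, and \ref{lem-smooth-Fano-surface-curve} to secure normality of $W'$, together with the topological input that $\mathrm{Pic}(\p^2)$ is $2$-torsion free, which is what prevents $W'$ from being two disjoint copies of $W$.
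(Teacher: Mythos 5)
Your proof is correct, and its skeleton --- the intersection numbers $A_1^2A_2=dq$, $A_1A_2^2=dp$, $-K_X\cdot A_1^2=2$, $-K_X\cdot A_2^2=4$, together with the normal irreducible fibre product $W\times_{B_2}Z_2$ forcing $d\in2\mathbb{Z}$ --- is exactly the paper's. You diverge in the endgame. The paper applies Lemma \ref{lem--K-big} to the intermediate cover $Z=W\times_{B_2}Z_2$ to conclude that $-K_Z$ is big, which pins down $q=1$ and identifies $Z_2\to B_2$ as branched along a smooth conic (so $Z_2\cong\p^1\times\p^1$); it then excludes $d=6$ by restricting to a general fibre surface $F$ of a projection $X\to\p^1$ and showing $K_F^2=\frac{4}{3p^2}(p^2+1)\notin\mathbb{Z}$. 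You instead obtain $q=1$ for free from the integrality of $(-K_X)^3/2$ once $d$ is known to be even (both cases with $q=2$ simply fail the arithmetic), and you exclude $d=6$ via the integrality of $(-K_X)^2\cdot A_1$, a triple product of Cartier divisors and hence an integer, whereas it would equal $10/3$ or $8/3$. This is a genuine simplification: it avoids both the bigness lemma and the adjunction computation, and uses only data living on $X$ itself. Your explicit verification that $W\times_{B_2}Z_2$ is irreducible (the branch divisor of $Z_2\to B_2$ is nonzero because $\Pic(\p^2)$ has no $2$-torsion, so the cyclic double cover cannot split) supplies a point the paper leaves implicit in asserting that the fibre product of Lemma \ref{lem-fiber-product-normal} is a variety.
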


\begin{proof}
Let $Z=h^*W$, where $h$ is the natural morphism $B_1\times Z_2\to B_1\times B_2$. Then $Z\cong W\times_{B_2}Z_2$. Since $W$ has isolated klt singularities, $W\to B_2$ has reduced fibers over some open subset of $B_2$ whose complement has codimension at least $2$ by Lemma \ref{lem-smooth-Fano-surface-curve}. 

\centerline{
\xymatrix{
Z\ar[r] \ar[d] & W  \ar[d]_{} \\
Z_2 \ar[r] & B_2 
}
}
\noindent By Lemma \ref{lem-double-cover}, $Z_2\to B_2$ is a cyclic cover. Hence by Lemma \ref{lem-fiber-product-normal} and Lemma \ref{lem-normal-hypersurface-equidim}, $Z$ is normal. This also shows that $Z$ is irreducible.  Thus $Z$ equal to the image of $X$ in $B_1\times Z_2$.  Moreover, $Z$ is Gorenstein for it is a Cartier divisor in $B_1\times Z_2$.  Since $-K_X$ is ample, $-K_Z$ is big by Lemma \ref{lem--K-big}. Thus $q=1$ and  $Z_2\cong \p^1\times \p^1$ is a double cover of $B_2$, branched along some smooth conic.  We also note that if $d=\mathrm{deg}\, f$, then $d\in 2\mathbb{Z}$.

  Let $A_1,A_2$ be two divisors in $X$ such that  $\sO_X(A_1)\cong f_1^*\sO_{B_1}(1)$ and $\sO_X(A_2)\cong f_2^*\sO_{B_2}(1)$. By assumption, there are two   rational numbers $a_1, a_2$ such that $-K_X\equiv a_1A_1+a_2A_2$. We have $$A_1^2A_2=qd=d \ \mathrm{and}\  A_1A_2^2=pd.$$ We note that every component of a general fiber  of $f_i$ is a smooth rational curve for $i=1,2$. Hence we have $$-K_XA_1^2=2 \  \mathrm{and}\ -K_XA_2^2=4.$$ This shows that $$a_1=\frac{4}{pd} \ \mathrm{and}\  a_2=\frac{2}{d}.$$ We have $$-K_X^3=3a_1a_2^2A_1A_2^2+3a_1^2a_2A_1^2A_2=\frac{48}{d^2}(1+\frac{2}{p^2}).$$ Since $-K_X^3$ is a positive even integer, we obtain that $$r=\frac{24}{d^2}(1+\frac{2}{p^2})$$ is an integer. Since $d\in 2\mathbb{Z}$, this implies that $d=2$ or $6$.

Assume that $d=6$. Then $a_1=\frac{2}{3p}$ and $a_2=\frac{1}{3}$. The hypersurface $Z$ is of degree $(p,1,1)$ in $B_1\times Z_2\cong \p^2\times \p^1\times \p^1$ and $X\to Z$ is of degree $3$.  Let $\phi ,\psi$ be the natural projections from $X$ to $\p^1$ induced by $X\to Z_2$.  Let $F$ and $G$ be general  fibers of $\phi$ and $\psi$ respectively. Then $\sO_X(F+G)\cong \sO_X(A_2)$.  We have $$A_1^2F=3, \ A_1FG=3p, \ G^2F=0$$ and   $$-K_X\equiv \frac{2}{3p}A_1+\frac{1}{3}F+\frac{1}{3}G.$$ By the adjunction formula, we have
\begin{eqnarray*}
K_{F}^2&=&(K_X+F)^2F=(\frac{2}{3p}A_1+\frac{1}{3}G)^2F\\
&=&\frac{4}{9p^2}A_1^2F+\frac{4}{9p}A_1GF=\frac{4}{3p^2}+\frac{4}{3}=\frac{4}{3p^2}(p^2+1).
\end{eqnarray*}
Since $X$ has canonical singularities and $F$ is  a general fiber, $F$ is  a surface with canonical singularities. In particular, $K_F^2\in \mathbb{Z}$. This implies that $3$ divides $(p^2+1)$  which is impossible since $p\in \mathbb{Z}$.

Thus we have $d=2$.
\end{proof}

\subsection{Proof of Theorem \ref{thm-classification-3f}}

Theorem \ref{thm-classification-3f} follows from Proposition \ref{prop12-finite-over-12} and Proposition \ref{prop12-finite-over-22}.

\begin{proof}[{Proof of Theorem \ref{thm-classification-3f}}]
Let $R_1,R_2$ be the  two extremal rays of the cone $\NE(X)$. Let $f_i \colon X\to B_i$ be the Mori fibration corresponding  $R_i$ for $i=1,2$. Then by Lemma \ref{lem-base-fano}, $B_i$ is a Fano variety with Picard number $1$ for $i=1,2$.  By symmetry, we may assume that $\mathrm{dim}\, B_1\leqslant \mathrm{dim}\, B_2$. Since $R_1\cap R_2=\{0\}$, the morphism $f_1\times f_2 \colon X\to B_1\times B_2$ is finite onto its image. 

Assume first that  $B_1=\p^1$. Then $B_2$ is of dimension $2$ since $f_1\times f_2$ is finite. By Lemma \ref{lem-base-*-p1}, we have $B_2\cong \p^2$.  Since $X$ has Picard number $2$, by Proposition \ref{prop12-finite-over-12}, $X$ is one of the threefolds in Proposition \ref{prop12-finite-over-12}. Since $X$ is locally factorial, if $f$ is of degree two, then $f$ is branched along a prime divisor by \cite[Lem. 3.7]{Dru16}. Hence $X$ is one of the threefolds of $1-3$ of Theorem \ref{thm-classification-3f}.

Assume that $\mathrm{dim}\, B_1=2$. Then $\mathrm{dim}\, B_2=2$. By Lemma \ref{lem-base-*}, we have $B_1\cong B_2\cong \p^2$.  Then $X$ is one of the threefold in  Proposition  \ref{prop12-finite-over-22}. Since $X$ is locally factorial, as in the previous paragraph, we can conclude that $X$ is one of the threefolds of $4-7$ of Theorem \ref{thm-classification-3f}.
\end{proof}

The following observation  will be useful for the proof of Theorem \ref{thm-classification}.

\begin{lemma}
\label{lem-factorial-image-p2p2}
Let $X$ be a threefold in $4-7$ of Theorem \ref{thm-classification-3f}. Let $W$ be the image of the natural morphism $X\to \p^2 \times \p^2$. Then $W$ is a normal variety with isolated locally factorial canonical singularities. 
\end{lemma}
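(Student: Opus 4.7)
The plan is to analyze the natural morphism $X \to \p^2 \times \p^2$ case by case and identify the image $W$ explicitly in each of the cases $4$--$7$ of Theorem \ref{thm-classification-3f}.

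First I would observe that in cases $4$, $5$, and $6$, the threefold $X$ is by definition a hypersurface in $\p^2 \times \p^2$ of bi-degree $(1,1)$, $(1,2)$, or $(2,2)$ respectively, so the natural morphism $X \to \p^2 \times \p^2$ is the closed embedding. Hence $W = X$. Since the lemma assumes $X$ is a threefold of Theorem \ref{thm-classification-3f}, $X$ in particular satisfies the standing hypothesis of that theorem, namely that it is a normal projective variety with isolated locally factorial canonical singularities. This immediately gives the desired conclusion in cases $4$--$6$.

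For case $7$, the natural morphism $X \to \p^2 \times \p^2$ factors as the double cover $p \colon X \to Y$ followed by the closed embedding $Y \hookrightarrow \p^2 \times \p^2$. Since $p$ is surjective, we have $W = Y$. The description of case $7$ explicitly stipulates that $Y$ is a smooth hypersurface of degree $(1,1)$; as a smooth variety it is automatically normal, has empty (hence isolated) singular locus, and is locally factorial with canonical (indeed regular) singularities.

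The main obstacle is essentially bookkeeping: one only needs to recognize that the image $W$ coincides with $X$ in the hypersurface cases and with the intermediate smooth variety $Y$ in the double-cover case, and then invoke the properties already bundled into the hypothesis of Theorem \ref{thm-classification-3f} together with the smoothness of $Y$ stipulated in case $7$. No delicate geometric argument is required, and the lemma is recorded mainly so that it can be quoted cleanly in the inductive arguments underlying Theorem \ref{thm-classification}.
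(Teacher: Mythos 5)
Your proposal is correct and matches the paper's (very short) proof: in cases 4--6 the map $X\to W$ is an isomorphism so the properties are inherited from the hypotheses on $X$, and in case 7 the image is the smooth degree-$(1,1)$ hypersurface $Y$. The only cosmetic difference is that the paper cites Proposition \ref{prop12-finite-over-22} for the smoothness of $W$ in the degree-$2$ case, whereas you read it off directly from the statement of case 7; these amount to the same thing.
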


\begin{proof}
If the natural morphism $X\to W$ is an isomorphism, then there is nothing to prove. If $X\to W$ is of degree $2$, then $W$ is smooth of degree $(1,1)$ in $\p^2\times \p^2$.  
\end{proof}

\section{Case of dimension $4$}
\label{Fano fourfolds satisfying $(*)$}

We will study  Fano fourfolds $X$ with locally factorial canonical singularities such that $X$ is smooth in codimension $2$, $\rho(X) =3$ and  $\mathrm{Nef}(X) = \mathrm{Psef}(X)$. Consider the cone $\NE(X)$. For every face $V_2$ of dimension $2$ of $\NE(X)$, there are exactly two faces $V_1$ and $V_3$, of dimension $2$, whose intersections with $V_2$ are extremal rays of $\NE(X)$.   The aim of this section is to prove the following proposition.

\begin{prop}
\label{prop-4f-to-p1} 
If $X$ is a  Fano fourfold with locally factorial canonical singularities such that $X$ is smooth in codimension $2$, $\rho(X) = 3$ and  $\mathrm{Nef}(X) = \mathrm{Psef}(X)$, then there is a  fibration from $X$ to $\p^1$.
\end{prop}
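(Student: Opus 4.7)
The plan is to argue by contradiction: assume that $X$ admits no fibration to $\p^1$, and derive a contradiction from the rigid combinatorial structure of $\NE(X)$ forced by the preparatory lemmas of Section \ref{Fibrations on  varieties of Theorem \ref{thm-classification}} combined with the dimension-$3$ classification in Theorem \ref{thm-classification-3f}.

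First, I will show that every Mori fibration of $X$ targets a threefold of type 4--7 of Theorem \ref{thm-classification-3f}. Let $f\colon X\to Y$ be any Mori fibration. Since $\rho(X)=\mathrm{dim}\, X-1=3\geqslant 2$ and there is no fibration $X\to\p^1$, Lemma \ref{lem-equidimensional} gives that $f$ is equidimensional of relative dimension $1$, so $\mathrm{dim}\, Y=3$. By Lemma \ref{lem-base-*}, $Y$ is a Fano threefold with locally factorial canonical singularities, smooth in codimension $2$, with $\rho(Y)=2$ and $\mathrm{Nef}(Y)=\mathrm{Psef}(Y)$, so $Y$ is one of the varieties in Theorem \ref{thm-classification-3f}. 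The types 1--3 each admit a Mori fibration to $\p^1$, whose composition with $f$ would give the forbidden fibration $X\to\p^1$; hence $Y$ is of type 4--7. A parallel argument handles codimension-one faces: if $V$ is a $2$-dimensional face of $\NE(X)$ with contraction $\pi_V\colon X\to Z_V$, then $\rho(Z_V)=1$, and Lemma \ref{lem-base-*} applied to $\pi_V$ shows $Z_V$ satisfies the hypotheses of Theorem \ref{thm-classification} in its own dimension, forcing $\mathrm{dim}\, Z_V=\rho(Z_V)+1=2$. Smoothness in codimension $2$ means $Z_V$ is smooth, and a smooth Fano surface of Picard number $1$ with $\mathrm{Nef}=\mathrm{Psef}$ is $\p^2$.

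This produces the following rigid picture. Label the extremal rays of $\NE(X)$ cyclically as $R_1,\dots,R_k$ with $k\geqslant 3$, and the $2$-dimensional faces as $V_{i,i+1}=\mathbb{R}_{\geqslant 0}R_i+\mathbb{R}_{\geqslant 0}R_{i+1}$ (indices mod $k$). Each Mori fibration $f_i\colon X\to Y_i$ contracts $R_i$, and the two Mori fibrations $Y_i\to\p^2$ correspond to the two faces $V_{i-1,i}$ and $V_{i,i+1}$ containing $R_i$; via $Y_i$ we obtain a finite morphism $Y_i\to W_i\subseteq\p^2\times\p^2$ onto a normal hypersurface as in Lemma \ref{lem-factorial-image-p2p2}. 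Moreover $X\to Y_i$ is a conic bundle by Lemma \ref{lem-Gorenstein-Fano-fibration}, and for a general fiber $C$ of $f_i$ the Gorenstein condition gives $-K_X\cdot C=2$, while the hyperplane pullbacks $H_V=\pi_V^*\sO_{\p^2}(1)$ with $R_i\subseteq V$ satisfy $H_V\cdot C=0$ (these fibrations factor through $f_i$).

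The remaining step --- which I expect to be the main obstacle --- is to extract a contradiction from this rigid structure by a numerical analysis analogous to, one dimension higher than, Propositions \ref{prop12-finite-over-12} and \ref{prop12-finite-over-22}. The plan is to choose a basis of $\N^1(X)$ from the $H_V$'s (which generate the extremal rays of $\mathrm{Nef}(X)=\mathrm{Psef}(X)$), expand $-K_X$ in this basis, and use that $-K_X$ is Cartier together with the integrality of $(-K_X)^4$, the values $-K_X\cdot C=2$ on fibers of the various conic bundles $f_i$, and the bi-degrees of $W_i\subseteq \p^2\times\p^2$ dictated by the type of $Y_i$. This produces a system of divisibility and bi-degree constraints which must be cyclically compatible around $R_1,\dots,R_k$. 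I expect this system to admit no solution, yielding the desired contradiction. The technical heart is the case $k=3$, where the finite morphism $X\to(\p^2)^3$ (obtained from all three $\pi_{V_{i,i+1}}$) maps onto a codimension-$2$ subvariety, and whose degree must agree through each of the three factorings $X\to Y_i\to W_i$; an intersection-theoretic computation analogous to Lemma \ref{lem-22-finite-over-22} should show no such configuration exists, forcing the existence of the fibration $X\to\p^1$.
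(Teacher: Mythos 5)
Your setup is sound and coincides with the paper's: you correctly reduce to the situation where every Mori fibration $X\to Y$ lands on a threefold of type 4--7 of Theorem \ref{thm-classification-3f}, every codimension-one face of $\NE(X)$ contracts to $\p^2$, and (for three consecutive faces) $X$ maps finitely onto a codimension-$2$ subvariety $Z$ of $(\p^2)^3$ cut out by hypersurfaces of degrees $(r,s,0)$ and $(0,a,b)$ with $r,s,a,b\in\{1,2\}$. But the contradiction itself is only announced, not derived, and the route you sketch for it points in the wrong direction. First, the simplicial case $k=3$ is not the technical heart --- it is the easy case and is in fact impossible: if $V_1\cap V_3$ were an extremal ray, the image of $f_1\times f_3$ would be a proper subvariety of $\p^2\times\p^2$; but the fiber of $D'\to B_1$ over $x$ is a curve of degree $s>0$ in the middle $\p^2$ and the fiber of $E'\to B_3$ over $z$ is a curve of degree $a>0$, and two plane curves of positive degree always meet, so the projection of $D\cap E$ to $B_1\times B_3$ is surjective. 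Hence $\NE(X)$ is \emph{not} simplicial (Lemma \ref{lem-4f-sim-p1}), $f_1\times f_3$ is finite, and all the work lies in this case.

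Second, the contradiction in the non-simplicial case does not come from divisibility of $(-K_X)^4$ or cyclic compatibility of bi-degrees; it needs two geometric inputs your sketch omits, followed by a positivity (not integrality) computation. (i) A Chow-variety argument (Lemma \ref{lem-p2p2p2-1}): if any of the relevant bidegrees equals $1$, one can find a line in the middle $\p^2$ that is simultaneously a fiber of $D'\to B_1$ and contained in a fiber of $E'\to B_3$, producing a positive-dimensional fiber of $f_1\times f_3$; this forces $r=s=a=b=2$. (ii) Normality of $Z$ (Lemma \ref{lem-image-normal-4f}), so that adjunction applies and gives $-K_Z\equiv H_1-H_2+H_3$. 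Then $-K_Z$ is big because $-K_X$ is ample and $X\to Z$ is finite (Lemma \ref{lem--K-big}), and $H_1+H_3$ is ample because $f_1\times f_3$ is finite, so $-K_Z(H_1+H_3)^3>0$; yet a direct computation on $Z=(2,2,0)\cap(0,2,2)$ gives $-K_Z(H_1+H_3)^3=6H_1^2H_3^2-3H_1^2H_2H_3-3H_1H_2H_3^2=24-12-12=0$. Without steps (i) and (ii) and this final computation, your proposal does not close, and the integrality constraints you propose to exploit are weaker in dimension $4$ than in dimension $3$ (there is no parity statement for $(-K_X)^4$ analogous to the evenness of $-K_X^3$), so I do not expect your announced mechanism to substitute for the paper's.
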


We will assume the existence of Fano fourfolds which satisfy the following condition $(*)$ and we will obtain a contradiction.

\begin{enumerate}
\item[$(*)$] $X$ is a Fano fourfold with locally factorial canonical singularities such that $X$ is smooth in codimension $2$, $\rho(X) = 3$ and  $\mathrm{Nef}(X) = \mathrm{Psef}(X)$. Moreover, there is no fibration from $X$ to $\p^1$.
\end{enumerate}

We will first show  that if $X$ is a variety satisfying $(*)$, then it is a finite cover of the intersection  of two hypersurfaces of degree $(0,2,2)$ and $(2,2,0)$ in $\p^2\times \p^2 \times \p^2$.

\begin{lemma}
\label{lem-f4-p2}
Let $X$ be a Fano fourfold satisfying the conditions in Proposition \ref{prop-4f-to-p1}. Assume that there is no fibration from $X$ to $\p^1$. Let $V_1,V_2,V_3$ be three distinct  faces of dimensions $2$ in $\NE(X)$ such that $V_1\cap V_2$ and $V_2\cap V_3$ are extremal rays. Let $f_i \colon X\to B_i$ be the fibration corresponding to $V_i$ ($i=1,2,3$). Then $B_i= \p^2$ for all $i$ and the morphism $f_1\times f_2\times f_3 \colon X\to \p^2\times \p^2\times \p^2$ is finite onto its image $Z$. Moreover, $Z$ is the  intersection of hypersurfaces $D$ and $E$ of degree $(r,s,0)$ and $(0,a,b)$ respectively such that  $r,s,a,b\in \{1,2\}$.
\end{lemma}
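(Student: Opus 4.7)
The plan is to identify the $B_i$, show the product map is finite, and then reduce to the three-fold classification of Theorem~\ref{thm-classification-3f} to describe $Z$ as the claimed complete intersection.

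First, each face $V_i$ has dimension $\rho(X)-1=2$, so the corresponding contraction $f_i\colon X\to B_i$ has $\rho(B_i)=1$. By Lemma~\ref{lem-base-*}, each $B_i$ is a Fano variety with locally factorial canonical singularities, smooth in codimension~$2$, with $\rho(B_i)=\dim B_i-1$ and $\mathrm{Nef}(B_i)=\mathrm{Psef}(B_i)$; hence $\dim B_i=2$, and a normal surface smooth in codimension~$2$ is actually smooth. A smooth Fano surface of Picard number one is $\p^2$. Next, a curve $C\subseteq X$ is contracted by $f_1\times f_2\times f_3$ if and only if $[C]\in V_1\cap V_2\cap V_3$; since $V_1\cap V_2$ and $V_2\cap V_3$ are two distinct extremal rays of $\NE(X)$ they meet only at the origin, so no curve is contracted, the product map has finite fibers, and its image $Z$ is a $4$-dimensional subvariety of $(\p^2)^3$.

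To identify the images $W_{12}$ and $W_{23}$ of $f_1\times f_2$ and $f_2\times f_3$, I would first contract the extremal ray $V_1\cap V_2$ to obtain a fibration $g_{12}\colon X\to Y_{12}$ through which $f_1\times f_2$ factors. By Lemma~\ref{lem-base-*}, $Y_{12}$ is a threefold satisfying the hypotheses of Theorem~\ref{thm-classification-3f}. It cannot admit a fibration to $\p^1$, for otherwise composing with the connected fibration $g_{12}$ would yield one from $X$, against hypothesis. Hence $Y_{12}$ lies in cases~$4$--$7$ of Theorem~\ref{thm-classification-3f}, and Lemma~\ref{lem-factorial-image-p2p2} identifies $W_{12}\subseteq\p^2\times\p^2$ as a normal hypersurface of bidegree $(r,s)$ with $r,s\in\{1,2\}$. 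The symmetric argument using the extremal ray $V_2\cap V_3$ produces $W_{23}\subseteq\p^2\times\p^2$ of bidegree $(a,b)$ with $a,b\in\{1,2\}$.

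Finally, let $D$ and $E$ be the pullbacks of $W_{12}$ and $W_{23}$ along the projections $(\p^2)^3\to\p^2\times\p^2$ onto the first two and last two factors; these are hypersurfaces of the required tridegrees $(r,s,0)$ and $(0,a,b)$, and by construction $Z\subseteq D\cap E$. Both sides have dimension $4$, so it suffices to show that $D\cap E=W_{12}\times_{\p^2}W_{23}$ is irreducible. For this I would project onto the middle factor $\p^2$: by Lemma~\ref{lem-normal-hypersurface-equidim} this projection is equidimensional with $1$-dimensional fibers, so every component of $D\cap E$ dominates $\p^2$, and by generic smoothness in characteristic zero the generic fiber is a product of two irreducible plane curves of degree at most~$2$ (a line or a smooth conic), hence irreducible. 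This forces $D\cap E$ to be irreducible of dimension~$4$, whence $Z=D\cap E$. The main difficulty I anticipate is precisely this last step---controlling the components of $D\cap E$ and in particular ruling out reducible degenerations of the generic conic slice when $r$ or $b$ equals $2$---while the first two steps reduce quite directly to the earlier lemmas and the three-dimensional classification.
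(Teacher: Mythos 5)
Your identification of the bases $B_i\cong\p^2$, the finiteness of $f_1\times f_2\times f_3$, and the description of the images $W_{12}$, $W_{23}$ of $f_1\times f_2$ and $f_2\times f_3$ (via the contractions of the rays $V_1\cap V_2$, $V_2\cap V_3$, Theorem \ref{thm-classification-3f} and Lemma \ref{lem-factorial-image-p2p2}) all coincide with the paper's argument. Where you genuinely diverge is the last step, $Z=D\cap E$. The paper does not know $E$ in advance: it notes that $D=W_{12}\times B_3$ is locally factorial, so $Z$ is a Cartier divisor on $D$, computes $\mathrm{Pic}(D)\cong\mathrm{Pic}(B_1)\times\mathrm{Pic}(B_2)\times\mathrm{Pic}(B_3)$ by Lefschetz, lifts the section cutting out $Z$ to a hypersurface $E$ of tridegree $(c,a,b)$, kills $c$ by observing that $f_2\times f_3$ contracts the curves in $V_2\cap V_3$, and only then reads off $a,b\in\{1,2\}$ from the threefold classification. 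You instead pin down both $W_{12}$ and $W_{23}$ first and show directly that $D\cap E=W_{12}\times_{B_2}W_{23}$ is irreducible of dimension $4$, hence equals the $4$-dimensional $Z$ sitting inside it. Your irreducibility argument does go through: Lemma \ref{lem-normal-hypersurface-equidim} makes both projections to the middle $\p^2$ equidimensional with $1$-dimensional fibres, so every component of $D\cap E$ (of dimension $\geqslant 4$ by Krull's theorem) dominates $B_2$; and since $W_{12}$, $W_{23}$ have only isolated singularities, generic smoothness makes the general fibre a product of two smooth, hence irreducible, plane curves of degree at most $2$. (A cleaner way to get this last point, avoiding the degeneration worry you flag: the general fibre of $W_{12}\to B_2$ is the image of a general fibre of the Mori fibration $Y_{12}\to B_2$ under the finite surjection $Y_{12}\to W_{12}$, hence irreducible.) The paper's route gives the Cartier-divisor structure of $Z$ in $D$ for free and sidesteps the irreducibility analysis entirely; yours is more elementary, avoiding the Lefschetz computation on the singular fourfold $D$ and the lifting of sections from $D$ to $(\p^2)^3$, and it exhibits $Z$ as the fibre product in the form actually used later in Section 6.
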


\begin{proof}
For all $i$, since the base $B_i$ has Picard number $1$ and there is no  fibration from $X$ to $\p^1$, by Lemma \ref{lem-base-*},  we obtain that $B_i\cong \p^2$.

\centerline{
\xymatrix{
& X \ar[rd] \ar[ld] &\\
 D' \ar[d] \ar[rd]& &E' \ar[d]\ar[ld] \\
B_1& B_2 & B_3
}
}

Let $g \colon X\to Y$ be the Mori fibration  corresponding to the extremal ray  $R_{12}=V_1\cap V_2$. Then $g$ is equidimensional of relative dimension $1$ by Lemma \ref{lem-equidimensional}.  By Lemma \ref{lem-base-*}, $Y$ is a Fano threefold with isolated locally factorial canonical singularities such that  $\rho(Y) = 2$ and  $\mathrm{Nef}(Y) = \mathrm{Psef}(Y)$.  Hence $Y$ is one of the threefolds in $4-7$ of Theorem \ref{thm-classification-3f}. Let $D'$ be the image of $f_1\times f_2 \colon X\to B_1 \times B_2$. Since $f_1\times f_2$ factorizes through $g \colon X\to Y$,  $D'$ is a   hypersurface of degree $(r,s)$ in $B_1 \times B_2$  with $r,s\in \{1,2\}$ by Theorem \ref{thm-classification-3f}. By the Lefschetz theorem, the natural map $\mathrm{Pic}(B_1\times B_2) \to \mathrm{Pic}(D')$ is an isomorphism  (see \cite[Example 3.1.25]{Laz04}). Moreover, $D'$ has isolated locally factorial canonical singularities  by Lemma \ref{lem-factorial-image-p2p2}.
 
Let $h \colon X\to D'$ be the natural morphism. Since $R_{12} \cap V_3 =\{0\}$, the morphism $h\times f_3 \colon X\to D'\times B_3$ is finite onto its image $Z$.   Let  $D=D'\times B_3$. Then $D$ is a hypersurface of degree $(r,s,0)$ in $B_1\times B_2\times B_3$. Since $B_3$ is smooth, the product $D$ is locally factorial (this follows from the fact that the ring of polynomials with coefficients in a unique factorization domain is still a unique factorization domain). Hence $Z$ is a Cartier divisor in $D$.

We note that the natural morphism $ \mathrm{Pic}(D')\times \mathrm{Pic}(B_3) \to \mathrm{Pic}(D)$ is an isomorphism since $B_3=\p^2$ is a simply connected manifold (see \cite[Ex. III.12.6]{Har77}). Hence the natural morphism $$\mathrm{Pic}(B_1)\times \mathrm{Pic}(B_2)\times \mathrm{Pic}(B_3) \to  \mathrm{Pic}(D)$$ is an isomorphism. Since $Z$ is a Cartier divisor in $D$, we obtain  that there is a hypersurface $E$ of degree $(c,a,b)$ in $B_1\times B_2\times B_3$ such that $Z$ is the intersection of $E$ and $D$.

Let $R_{23}$ be the extremal ray $V_2\cap V_3$.  As in the first paragraph, we obtain that  the image of the morphism $f_2\times f_3 \colon X\to B_2\times B_3$  is a proper subvariety. Thus  the image of the natural morphism $Z\to  B_2\times B_3$ is also a proper subvariety. This implies that $c=0$.

Let $E'\subseteq B_2\times B_3$ be the image of $f_2\times f_3$. Since $f_2\times f_3$ factors through $X\to Z$, we obtain that $E'$ is of degree $(a,b)$ in $B_2\times B_3$.  As in the second paragraph of the proof, we have $a,b\in\{1,2\}$ and $E\cong B_1\times E'$. This completes the proof of the lemma.
\end{proof}

\begin{lemma}
\label{lem-4f-sim-p1}
Let $X$ be a Fano fourfold satisfying the conditions in Proposition \ref{prop-4f-to-p1}. Assume that there is no fibration from $X$ to $\p^1$. Then the cone $\NE(X)$ is not simplicial.
\end{lemma}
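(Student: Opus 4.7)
The plan is to assume for contradiction that $\NE(X)$ is simplicial, with three extremal rays $R_1,R_2,R_3$ and three $2$-dimensional faces $V_1,V_2,V_3$ (where $V_i=R_j+R_k$ for $\{i,j,k\}=\{1,2,3\}$), and derive a contradiction from an intersection-number calculation in the Chow ring of $(\p^2)^3$. The key observation driving the argument is that in the simplicial case any two of the $V_i$ already meet in an extremal ray, so the hypothesis of Lemma \ref{lem-f4-p2} is satisfied for every choice of which $V_i$ plays the role of the middle face there.

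First I would apply Lemma \ref{lem-f4-p2} with $V_2$ as the middle face, and then a second time with $V_1$ as the middle face. Combining the two applications I expect to obtain the following: each $B_i\cong\p^2$; for each pair $i\neq j$ the image $D_{ij}:=(f_i\times f_j)(X)\subseteq B_i\times B_j$ is an irreducible hypersurface of bidegree $(\alpha_{ij},\beta_{ij})$ with $\alpha_{ij},\beta_{ij}\in\{1,2\}$; and the image $Z\subseteq(\p^2)^3$ of $f_1\times f_2\times f_3$ is given set-theoretically in two ways
\[
Z=\pi_{12}^{-1}(D_{12})\cap\pi_{23}^{-1}(D_{23})=\pi_{12}^{-1}(D_{12})\cap\pi_{13}^{-1}(D_{13}),
\]
where $\pi_{ij}:(\p^2)^3\to B_i\times B_j$ is the natural projection.

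Next I would pass to the Chow ring of the smooth variety $(\p^2)^3$. Let $H_i\in A^1((\p^2)^3)$ denote the pullback of the hyperplane class from the $i$-th factor, so that $[\pi_{ij}^{-1}(D_{ij})]=\alpha_{ij}H_i+\beta_{ij}H_j$. Each divisor $\pi_{ij}^{-1}(D_{ij})$ is irreducible (as $D_{ij}$ is the image of the irreducible variety $X$), and two such divisors with distinct index pairs have distinct classes and meet properly along $Z$; hence the intersection product of two of them equals a positive integer multiple of $[Z]$. Looking at the coefficient of the basis class $H_1^2\in A^2((\p^2)^3)$ in
\[
(\alpha_{12}H_1+\beta_{12}H_2)(\alpha_{23}H_2+\beta_{23}H_3)=m_1[Z]\quad\text{and}\quad(\alpha_{12}H_1+\beta_{12}H_2)(\alpha_{13}H_1+\beta_{13}H_3)=m_2[Z]
\]
with $m_1,m_2\ge 1$, the first is $0$ while the second is $\alpha_{12}\alpha_{13}\ge 1$. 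Writing $c$ for the $H_1^2$-coefficient of $[Z]$, the first identity forces $c=0$, contradicting $\alpha_{12}\alpha_{13}=m_2c>0$ coming from the second.

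The main technical point I foresee will be justifying the properness of the scheme-theoretic intersections above (i.e.\ that their support is exactly $Z$, with the expected codimension $2$, so that the Chow-ring product really is a positive multiple of $[Z]$), which I would extract from the set-theoretic equalities furnished by Lemma \ref{lem-f4-p2} together with irreducibility of $Z$ and the distinctness in $A^1((\p^2)^3)$ of the two pulled-back divisor classes involved at each step.
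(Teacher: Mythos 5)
Your proof is correct and follows essentially the same route as the paper: reduce to Lemma \ref{lem-f4-p2} and extract a numerical contradiction in the Chow ring of $(\p^2)^3$ from the fact that, in the simplicial case, the third pair of faces also meets in an extremal ray. The paper is slightly more economical --- it applies Lemma \ref{lem-f4-p2} only once and computes $[D]\cdot[E]\cdot H_1^2H_3^2=sa>0$, which must vanish because $f_1\times f_3$ contracts curves so that $Z$ cannot dominate $B_1\times B_3$ --- but your double application of the lemma and comparison of the $H_1^2$-coefficients of the two presentations of a positive multiple of $[Z]$ reaches the same contradiction by a legitimate variant of the same computation.
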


\begin{proof}
Assume the opposite.  Since the cone $\NE(X)$ is simplicial, it has exactly three faces $V_1$, $V_2$, $V_3$ of dimension $2$ and exactly three extremal rays $ V_1\cap V_2$, $ V_2\cap V_3$ and  $V_3\cap V_1$. As in the proof of Lemma \ref{lem-f4-p2}, for every face $V_i$, there is a  fibration $f_i \colon X\to B_i$ corresponding $V_i$, where $B_i\cong \p^2$ ($i=1,2,3$). Let $Z$ be the image of $f_1\times f_2\times f_3 \colon X\to B_1\times B_2\times B_3$. Then $Z$ is the  intersection of hypersurfaces of degree $(r,s,0)$ and $(0,a,b)$ and that $r,s,a,b \in \{1,2\}$. Since   $V_3\cap V_1\neq \{0\}$, we obtain that the image of  $f_1\times f_3 \colon X\to B_1\times B_3$ is a proper subvariety. As in the proof of Lemma \ref{lem-f4-p2}, this implies that $Z$ is contained in a hypersurface of degree $(a',0,b')$ for some $a',b'\in \mathbb{Z}$. This is a contradiction for $a,s\neq 0$.
\end{proof}

\begin{lemma}
\label{lem-p2p2p2-1}
Let $W=B_1\times \B_2\times B_3$, where $B_i=\p^2$ for $i=1,2,3$. Let $D'$ and $E'$ be  normal $\mathbb{Q}$-factorial klt hypersurfaces in $B_1\times B_2$ and $B_2\times B_3$ of degree $(a,1)$ and $(b,c)$ respectively with $a,b,c\in \{1,2\}$.  Let $D= D'\times B_3$ and $E= B_1\times E'$. In  other words, $D$ and $E$ are hypersurfaces of degree $(a,1,0)$ and $(0,b,c)$ in $W$. Let $Z$ be the intersection of $D$ and $E$.  If $f_1,  f_3$ are the natural projections from $Z$ to  $B_1, B_3$ respectively, then $f=f_1\times f_3 \colon Z\to B_1\times B_3$ is not finite.
\end{lemma}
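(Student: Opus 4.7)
The fiber of $f = f_1 \times f_3$ over $(p,r) \in B_1 \times B_3$ identifies with $L_p \cap C_r \subset B_2$, where $L_p := \{x_2 \in B_2 : (p,x_2) \in D'\}$ is a line in $B_2$ (since $D'$ has bidegree $(a,1)$) and $C_r := \{x_2 \in B_2 : (x_2,r) \in E'\}$ is a curve of degree $b$. Thus $f$ is finite if and only if $L_p \not\subseteq C_r$ for every $(p,r)$, and the plan is to exhibit a pair $(p,r)$ with $L_p \subseteq C_r$.

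The key preliminary step is: for every line $L \subset B_2$, the locus $\Gamma_L := \{r \in B_3 : L \subseteq C_r\}$ has dimension zero, and symmetrically $\Gamma'_L := \{p \in B_1 : L_p = L\}$ has dimension zero. I would prove this by contradiction: if $\dim \Gamma_L \geq 1$, then $L \times \Gamma_L$ is a non-zero effective Weil divisor on the threefold $E'$. Since $E'$ is an ample normal $\mathbb{Q}$-factorial klt hypersurface of bidegree $(b,c)$ in $B_2 \times B_3$ with $b,c \in \{1,2\}$, adjunction gives $-K_{E'}$ ample, so Lemma \ref{lem-normal-hypersurface-equidim} shows both projections $E' \to B_2$ and $E' \to B_3$ are equidimensional Mori fibrations. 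Denoting by $H_2, H_3$ the pullbacks of hyperplane classes from $B_2, B_3$, the general-fiber classes $\ell_2, \ell_3$ of these two Mori fibrations satisfy $H_2 \cdot \ell_3 = b$, $H_3 \cdot \ell_2 = c$, and $H_2 \cdot \ell_2 = H_3 \cdot \ell_3 = 0$, so they form a basis of $\N_1(E')_{\mathbb{Q}}$. Taking representatives of $\ell_2, \ell_3$ above general points $q_0 \in B_2 \setminus L$ and $r_0 \in B_3 \setminus \Gamma_L$, both intersections $(L \times \Gamma_L) \cdot \ell_2$ and $(L \times \Gamma_L) \cdot \ell_3$ vanish, so $L \times \Gamma_L \equiv 0$ on $E'$. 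But by the $\mathbb{Q}$-factoriality of $E'$ this divisor is $\mathbb{Q}$-Cartier, and a non-zero effective $\mathbb{Q}$-Cartier divisor on a projective variety cannot be numerically trivial (it has positive intersection with a high power of an ample class), a contradiction. The argument for $\Gamma'_L$ uses the analogous structure on $D'$.

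Consequently, the rational map $\phi_1 \colon B_1 \dashrightarrow (\p^2)^\vee$, $p \mapsto L_p$, has zero-dimensional fibers and is non-constant (otherwise its three coordinate forms are proportional, forcing $D'$ to factor and contradicting normality); hence it is dominant. If $b = 1$, the same argument makes $\phi_2 \colon B_3 \dashrightarrow (\p^2)^\vee$, $r \mapsto C_r$, dominant; by dominance a common image point $L_0$ exists, and any $(p,r)$ with $\phi_1(p) = \phi_2(r) = L_0$ gives $L_p = L_0 = C_r$, so the fiber of $f$ over $(p,r)$ is the line $L_0$. If $b = 2$, take instead $\phi_2 \colon B_3 \dashrightarrow |\sO_{B_2}(2)| \cong \p^5$: it is non-constant by normality of $E'$, so its image is positive-dimensional and therefore meets the degenerate-conic hypersurface $\Delta \subset \p^5$. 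Setting $\Gamma := \phi_2^{-1}(\Delta)$ (non-empty of dimension $\geq 1$) and forming the incidence variety $\widetilde{\Gamma} := \{(r,L) \in \Gamma \times (\p^2)^\vee : L \subseteq C_r\}$, the projection $\widetilde{\Gamma} \to \Gamma$ is generically $2$-to-$1$, while the projection $\widetilde{\Gamma} \to (\p^2)^\vee$ has fibers $\Gamma \cap \Gamma_{L_0}$, finite by the key step; hence its image $\gamma \subset (\p^2)^\vee$ has dimension $\dim \Gamma \geq 1$. Since $\phi_1$ is dominant, a general $L_0 \in \gamma$ equals some $L_p$, and the matching $r \in \Gamma$ yields $L_p \subseteq C_r$.

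The main obstacle is the numerical-triviality argument of the key step: it depends crucially on $\mathbb{Q}$-factoriality of $E'$ and $D'$, since otherwise explicit examples such as $E' = \{r_0 y_0^2 + r_1(y_1^2 + y_2^2) = 0\} \subset \p^2 \times \p^2$ show that $\Gamma_L$ can be a whole curve and the conclusion can fail.
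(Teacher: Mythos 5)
Your proposal follows the same essential strategy as the paper: identify the fiber of $f$ over $(p,r)$ with $L_p\cap C_r$ and exhibit a pair for which the line $L_p$ is contained in the (possibly singular) curve $C_r$, splitting into the cases $b=1$ and $b=2$ and, in the latter, extracting a line from a degenerate member of the family $\{C_r\}$. The sub-arguments differ in flavour: the paper gets surjectivity of the classifying map $B_1\to(\p^2)^\vee$, $p\mapsto L_p$, in one line (a non-constant morphism out of $\p^2$ contracts no curve since $\rho(\p^2)=1$, hence is finite and surjective), whereas you first prove finiteness of the fibers $\Gamma_L,\Gamma'_L$ via the numerical-triviality argument on the $\mathbb{Q}$-factorial threefolds $D',E'$ and then deduce dominance; your version is longer but equally valid, and your incidence-variety count in the $b=2$ case replaces the paper's direct appeal to surjectivity.

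There is one step you should patch. In the $b=2$ case you conclude ``since $\phi_1$ is dominant, a general $L_0\in\gamma$ equals some $L_p$.'' Dominance only guarantees that the image of $\phi_1$ contains a dense open subset of $(\p^2)^\vee$; the curve $\gamma$ of line-components could a priori lie entirely in the complement of that open set, so a general point of $\gamma$ need not be in the image. What you actually need, and what the paper uses, is surjectivity of $\phi_1$. This is immediate from facts you have already established: since $D'\to B_1$ is equidimensional (Lemma \ref{lem-normal-hypersurface-equidim}), every fiber is a line, so $\phi_1$ is a genuine morphism from the projective surface $B_1$; it is proper with finite fibers by your key step, hence its image is closed of dimension $2$, i.e.\ all of $(\p^2)^\vee$ (equivalently, argue as the paper does via $\rho(B_1)=1$). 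With ``dominant'' upgraded to ``surjective'' the incidence-variety step is not even needed — any line component of any singular $C_r$ is of the form $L_p$ — and the proof is complete.
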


\begin{proof}

Let $p_1$, $p_2$ (resp. $q_2$, $q_3$) be the natural projections from $D'$ (resp. $E'$) to $B_1$ and $B_2$ (resp. $B_2$ and $B_3$). Since  $D'$ and $E'$ are $\mathbb{Q}$-factorial klt Fano varieties, the morphisms $p_1,p_2,q_2,q_3$ are equidimensional by Lemma \ref{lem-normal-hypersurface-equidim}. Thus $p_1 \colon D'\to B_1$ is a well-defined family of cycles in $B_2=\p^2$ (see \cite[Def. I.3.11]{Kol96}). Since $D'$ is of degree $(a,1)$, the cycles in this family are lines in $B_2$. Thus there exist a natural morphism $ch$ from $B_1$ to the Grassmannian of lines in $B_2$ (see \cite[Thm. I.3.21]{ Kol96}), which is   isomorphic to $\p^2$. Since $a\neq 0$, the morphism $ch$ is not constant. We note that $B_1$ has Picard number $1$. This shows that $ch$ is  finite. Hence it is surjective and for any line $L$ in $B_2$, there is a point $x$ in $B_1$ such that the fiber of $p_1$ over $x$  is mapped to  $L$ by the projection $p_2$.

\centerline{
\xymatrix{
& Z= (D'\times B_3) \cap (B_1\times E')  \ar[rd] \ar[ld] &\\
 D' \ar[d]^{p_1} \ar[rd]^{p_2}& &E' \ar[d]^{q_3}\ar[ld]^{q_2} \\
B_1& B_2 & B_3
}
}

In order to prove the lemma, it is enough to find two points $x\in B_1$ and $z\in B_3$ such that  $f^{-1}(\{(x,z)\})$  contains a curve. First we assume that $b=1$. Let $L$ be a line in $B_2$. From the previous paragraph, there is a point  $x\in B_1$   such that $p_1^{-1}(\{x\})$ is mapped to $L$ by $p_2$. Similarly, there is a point $z\in B_3$ such that  $q_3^{-1}(\{z\})$ is mapped to $L$ by   $q_2$.  Hence the curve $\{(x,y,z)\in W \ | \ y \in L \}$ is contained in $f^{-1}(\{(x,z)\})$.  Thus, $f$ is not finite.

Now we assume that $b=2$. Since $q_3$ is equidimensional, $q_3 \colon E'\to B_3$ is a well-defined family of cycles in $B_2\cong \p^2$. Hence there is a morphism from $B_3$ to the Chow variety  of conics in $\p^2$, which is isomorphic to $\p^5$. Since $B_3$ has Picard number $1$ and $c\neq 0$, this morphism is  finite onto its image.   We note that there is a hypersurface $H$ in this Chow variety  which parametrizes the singular conics. Moreover, since $\p^5$ has Picard number $1$, the divisor $H$ is ample. Thus $H$ meets the image of $B_3$.  In particular, there is a point $z\in B_3$ such that the   fiber of $q_3$ over $z$ is the union of two lines $L$ and $L'$. There is a point $x\in B_1$ such that $p_1^{-1}(\{x\})$ is mapped to $L$ by $p_2$. This shows that the curve $\{(x,y,z)\in W \ | \ y \in L \}$ is contained in $f^{-1}(\{(x,z)\})$. Hence $f$ is not finite.
\end{proof}

\begin{lemma}
\label{lem-f4-p2-2}
With the notation in Lemma \ref{lem-f4-p2}, the numbers $a,b,r,s$ are all equal to $2$. 
\end{lemma}

\begin{proof}
By Lemma \ref{lem-4f-sim-p1}, the cone $\NE(X)$ is not simplicial. Hence $V_1\cap V_3=\{0\}$. This implies that $f_1\times f_3 \colon X \to B_1 \times B_3$ is finite surjective.  Recall that   $ Z =(f_1\times f_2 \times f_3)(X) \subseteq B_1 \times B_2 \times B_3.$ Since $f_1\times f_3$ factors through $X\to Z$, the natural morphism $Z\to B_1\times B_2$ is finite surjective. We can write $D=D'\times B_3$ and $E=B_1\times E'$. Then $D'$ and $E'$ are Fano varieties with isolated locally factorial canonical singularities (see  the second paragraph of the proof of Lemma \ref{lem-f4-p2}). Thus by Lemma \ref{lem-p2p2p2-1}, we have  $s=a=2$. 

We note that there is a face $V_4$ of dimension $2$ of $\NE(X)$ such that $V_2\neq V_4$ and $V_3\cap V_4$ is an extremal ray.   Let   $f_4 \colon X\to B_4$ be the fibration corresponding to $V_4$. Then, as in Lemma \ref{lem-f4-p2}, $B_4=\p^2$ and $$f_2\times f_3\times f_4 \colon  X\to B_2\times B_3 \times B_4$$ is finite onto its image $T$. Moreover, $T$ is the intersection  of hypersurfaces of degree of $(a,b,0)$ and $(0,p,q)$  such that   $p,q\in \{1,2\}$. As in the previous paragraph, we have $b=p=2$. Similarly, we can obtain that $r=2$.
\end{proof}

\begin{lemma}
\label{lem-image-normal-4f}
With the notation in Lemma \ref{lem-f4-p2}, the image $Z$ of $f_1\times f_2\times f_3 \colon X\to B_1\times B_2\times B_3$ is normal.
\end{lemma}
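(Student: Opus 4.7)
The scheme $Z$ is cut out in the smooth ambient $W=B_1\times B_2\times B_3$ by the two Cartier divisors $D$ and $E$, hence is a complete intersection and in particular Cohen--Macaulay. By Serre's criterion, it suffices to show that $Z$ is regular in codimension one.

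I would organize the argument around the fibre-product factorisation
\[
Z = D' \times_{B_2} E',
\]
which is immediate from the definitions: a triple $(x,y,z)\in W$ lies in $Z$ iff $(x,y)\in D'$ and $(y,z)\in E'$. By Lemma \ref{lem-f4-p2-2} the hypersurfaces $D'$ and $E'$ have degree $(2,2)$, and by Lemma \ref{lem-factorial-image-p2p2} they are locally factorial canonical Fano threefolds. Together with Lemma \ref{lem-normal-hypersurface-equidim}, the projections $p_2\colon D'\to B_2$ and $q_2\colon E'\to B_2$ are equidimensional Fano fibrations of relative dimension one, and Lemma \ref{lem-Gorenstein-Fano-fibration} identifies them with conic bundles over $B_2=\p^2$. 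Let $\pi\colon Z\to B_2$ be the natural projection; its scheme-theoretic fibre over $y\in B_2$ is the product of conics $p_2^{-1}(y)\times q_2^{-1}(y)$, so $\pi$ is equidimensional of relative dimension two.

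Since $Z$ is Cohen--Macaulay, $B_2$ is smooth, and $\pi$ has Cohen--Macaulay fibres of constant dimension, miracle flatness gives that $\pi$ is flat. The standard criterion then yields that $Z$ is regular at $w$ if and only if the fibre $\pi^{-1}(\pi(w))$ is regular at $w$, which in turn, since a product is regular at a point iff each factor is, gives
\[
\mathrm{Sing}(Z) = \bigl((S_D\times B_3)\cap Z\bigr)\cup\bigl((B_1\times S_E)\cap Z\bigr),
\]
where $S_D\subseteq D'$ is the locus where the fibres of $p_2$ are non-regular, and similarly for $S_E$.

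The key estimate is $\dim S_D\leqslant 1$ (symmetrically for $S_E$), from which $\dim\mathrm{Sing}(Z)\leqslant 2 = \dim Z -2$, so $Z$ is regular in codimension one and therefore normal. This estimate follows by classifying the singular conic fibres of $p_2$: a pair of distinct lines contributes a single node per base point, while a non-reduced double line contributes an entire line of non-regular points. The former occur over a closed subset of $B_2$ of dimension at most one, and the latter only over a subset of dimension at most zero by Lemma \ref{lem-smooth-Fano-surface-curve}, which applies because $D'$ has isolated singularities and so is smooth in codimension two. Fibering the resulting subset of $D'$ over $B_3$ through the projection to $Z$ (whose fibres are copies of $q_2^{-1}(y)$) costs one extra dimension, yielding the required bound. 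The main technical point is keeping control of the possibly non-reduced double-line fibres; this is precisely what codimension-two smoothness of $D'$ and $E'$ is used for, and it is the only place where the hypothesis that $X$ is smooth in codimension two intervenes in this lemma.
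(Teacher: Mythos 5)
Your proof is correct and follows essentially the same strategy as the paper's: both establish $R_1$ for the Cohen--Macaulay complete intersection $Z$ by fibering it in conics over a base, applying miracle flatness and using Lemma \ref{lem-smooth-Fano-surface-curve} to confine the non-reduced (double-line) fibres to a locus of the right codimension; the paper projects $Z\to D'$ whereas you use the symmetric fibre product $Z=D'\times_{B_2}E'$ and project to $B_2$, which is only a cosmetic difference. One small imprecision: for a flat morphism onto a regular base, regularity of the fibre at $w$ implies regularity of the total space at $w$ but \emph{not} conversely (e.g.\ $(x,y)\mapsto xy$), so your displayed formula for $\mathrm{Sing}(Z)$ should be the inclusion $\subseteq$ rather than an equality --- fortunately that is the only direction your dimension count uses.
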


\begin{proof} 
By Lemma \ref{lem-f4-p2-2}, we have $Z=D\cap E$, where $D\cong D'\times B_3$ and $E\cong B_1\times E'$ are hypersurfaces of degree $(2,2,0)$ and $(0,2,2)$ respectively in $B_1\times B_2\times B_3$.

By Lemma \ref{lem-factorial-image-p2p2},  $E'$ is a Fano threefold  with isolated singularities. Hence, by Lemma \ref{lem-smooth-Fano-surface-curve}, there is an open subset $U$ of $B_2$ whose complement is of codimension at least $2$ such that the natural projection $q_2 \colon E'\to B_2$ has reduced fibers over $U$.

\centerline{
\xymatrix{
& Z= (D'\times B_3) \cap (B_1\times E')  \ar[rd] \ar[ld]^{\phi} &\\
 D' \ar[d]^{} \ar[rd]^{p_2}& &E' \ar[d]^{}\ar[ld]^{q_2} \\
B_1& B_2 & B_3
}
}
Consider the natural projection $\phi \colon Z\to D'$. Let $\alpha$ be a point in $D'$. Then the fiber of $\phi$ over $\alpha$ is a conic in $B_3$. Moreover, if $\beta$ is the image of $\alpha$ by the natural projection $p_2 \colon D'\to B_2$, then the fiber of $q_2$ over $\beta$ is isomorphic to  the fiber of $\phi$ over $\alpha$. Hence general fibers of $\phi$ are smooth conics in $B_3$ and the fibers of $\phi$ over $p_2^{-1}(U)\subseteq D'$ are reduced.

We note that $D'$ is a Fano threefold with locally factorial canonical singularities by Lemma \ref{lem-factorial-image-p2p2}. Thus  $p_2 \colon D'\to B_2$ is equidimensional by Lemma \ref{lem-normal-hypersurface-equidim}. Since $\mathrm{codim}\, B_2\backslash U\geqslant 2$, the complement of $p_2^{-1}(U)$ in $D'$ has codimension at least $2$. Let $$V=\{z\in Z\ |\ \phi^{-1}(\{\phi(z)\}) \ \mathrm{is \ singular \ at}\ z \}.$$ Then $\mathrm{codim}\, V\geqslant 2$. Since $Z$ is Cohen-Macaulay and $\phi$ is equidimensional, $\phi$ is flat over the smooth locus of $D'$. Since $D'$ is smooth in codimension $1$, this implies that $Z$ is smooth in codimension $1$. Thus $Z$ is normal for it is Cohen-Macaulay.
\end{proof}

Now we can prove Proposition \ref{prop-4f-to-p1}.

\begin{proof}[{Proof of Proposition \ref{prop-4f-to-p1}}]
Assume the opposite. With the same notation as in Lemma \ref{lem-f4-p2}, there are fibrations $f_i \colon X\to B_i$  such that $f_1\times f_2\times f_3 \colon X\to B_1\times B_2\times B_3$ is finite onto its image $Z$. By Lemma \ref{lem-f4-p2-2}, $Z$ is the intersection of $D'\times B_3$ and $B_1\times E'$ such that $D'$ is normal of degree $(2,2)$ in $B_1\times B_2$ and $E'$ is normal of degree $(2,2)$ in $B_2\times B_3$ (see Lemma \ref{lem-factorial-image-p2p2}). 

By Lemma \ref{lem-image-normal-4f}, $Z$ is normal.  Let $p_i \colon Z\to B_i$ be the natural fibration and let $H_i$ be a divisor in $Z$ such that $\sO_Z(H_i) \cong p_i^*\sO_{B_i}(1)$ for $i=1,2,3$. Then $$H_1^2H_3^2= H_1^2H_2H_3=H_1H_2H_3^2=4,$$ and,  by adjunction formula, $$-K_Z\equiv H_1-H_2+H_3.$$ Since $X$ is a Fano variety, $-K_Z$ is big by Lemma \ref{lem--K-big}. Moreover, since $\NE(X)$ is not simplicial by Lemma \ref{lem-4f-sim-p1}, we obtain that $V_1\cap V_3=\{0\}$ and that $f_1\times f_3 \colon X\to B_1\times B_3$ is finite. This shows that $p_1\times p_3 \colon Z\to B_1\times B_3$ is also finite. Hence $H_1+H_3$ is an ample divisor and the intersection number $-K_Z(H_1+H_3)^3$ is positive. However, we have 
\begin{eqnarray*}
-K_Z(H_1+H_3)^3& =&(H_1+H_3-H_2)(3H_1^2H_3+3H_1H_3^2)\\
&=&6H_1^2H_3^2-3H_1^2H_2H_3-3H_1H_2H_3^2=0. 
\end{eqnarray*}
We obtain a contradiction.
\end{proof}

\section{Finite morphisms between Fano varieties.}

Consider a variety $X$ of dimension $n$ which satisfies the conditions in Theorem \ref{thm-classification}. If we assume Theorem \ref{thm-classification} in lower dimensions,  then  we can show that there is a finite morphism from $X$ to $(\p^1)^{n-2}\times \p^2$ or to $(\p^1)^{n-3}\times W$, where $W$ is an ample hypersurface in $\p^2 \times \p^2$ (see the last section for a complete argument). The objective of this section is then to study    finite morphisms  from a Fano variety  to $(\p^1)^{n-2}\times \p^2$ or to $(\p^1)^{n-3}\times W$.

\subsection{Finite morphisms  over $(\p^1)^{n-2}\times \p^2$}

We will first look at  finite morphisms from an $n$-dimensional Fano variety  to  $(\p^1)^{n-2}\times \p^2$.

\begin{lemma}
\label{lem-finite-over4-112}
Let $Y$ be a variety  constructed by the method of  Construction   \ref{exmp-112}.2. Set $n=\mathrm{dim}
\, Y$. Assume that $Y$ is a variety with locally factorial canonical singularities such that it is smooth in codimension $2$, $\rho(Y) = \mathrm{dim}\, Y-1$ and  $\mathrm{Nef}(Y) = \mathrm{Psef}(Y)$. Let $X$ be a Fano variety with Gorenstein canonical singularities such that  there is a finite surjective morphism $f \colon X\to Y$. Assume that   the projection  from $X$ to $(\p^1)^{n-2}$  induced by $f$ has connected fibers.  Then $f$ is an isomorphism.
\end{lemma}

\begin{proof}
From the construction of $Y$, we have two double covers $$Y\overset{\pi_1}{\longrightarrow} T \overset{\pi_2}{\longrightarrow} (\p^1)^{n-2}\times \p^2,$$ where $\pi_2 \colon T\to  (\p^1)^{n-2}\times \p^2$ is branched along some prime divisor of degree $(2,...,2,0,...,0,2)$. In particular, $T\cong (\p^1)^r\times T'$, where $0<r<n-2$ and  $T'$ is a double cover over $(\p^1)^{n-2-r}\times \p^2$, branched along a divisor of degree $(2,...,2,2)$.  We note that $Y\to T' \to (\p^1)^{n-2-r}\times \p^2$ is the Stein factorization. By Lemma \ref{lem-base-*-p1}, we obtain that $T'$ is smooth in codimension $2$. Thus $T$ is smooth in codimension $2$.

We write $$(\p^1)^{n-2}\times \p^2=(\p^1)^{n-3}\times (\p^1 \times \p^2) = (\p^1)^{n-3}\times Z.$$ Let $p$ be the natural morphism from $Y$ to $(\p^1)^{n-3}$. Let $F$ be the fiber of $p$ over a general point $s\in (\p^1)^{n-3}$. By restricting $\pi_1$ and $\pi_2$ over the point $s$, we obtain two double covers  $$F\overset{\pi_1|_F}{\longrightarrow} V \overset{\pi_2|_V}{\longrightarrow} \{s\}\times Z,$$  where $V$ is the fiber of the natural projection $T\to (\p^1)^{n-3}$ over $s$. The double cover $\pi_2|_V \colon V\to \p^1\times \p^2$ (here we identify $\{s\}\times Z$ with $\p^1\times \p^2$) is branched along a prime divisor of degree $(0,2)$. Thus $V\cong \p^1\times V'$, where $V'$ is a double cover over $\p^2$. Since $T$ is smooth in codimension $2$, $V$ is also smooth in codimension $2$. This implies that $V\cong \p^1\times V'$ is in fact smooth. Thus the branched locus of $\pi_2|_V$ is  smooth  by  Lemma \ref{lem-double-cover}. 

Hence $V\cong \p^1\times \p^1 \times \p^1$. Assume that $\pi_1|_F\colon F\to V$ is branched along a divisor $D$. Then $D$ is the pullback of some   divisor of degree $(2,2)$ in $Z$ by construction. Thus $D$ is a   divisor of degree $(2,2,2)$ in $V\cong \p^1\times \p^1 \times \p^1$. By construction of $Y$, the double cover $\pi_1 \colon Y\to T$ is branched along a prime divisor. Moreover, since $Y$ is smooth in codimension $2$, this branched locus itself is smooth in codimension $1$ by Lemma \ref{lem-double-cover}.  Thus $D$  is also smooth in codimension $1$. Since $D$ is connected, it is then  irreducible.

Let $G$ be the fiber of the natural morphism $X\to (\p^1)^{n-3}$ over $s$. Then $f|_G \colon G\to F$ is also a finite surjective morphism. Since $s$ is a general point, $G$  is a Fano variety with Gorenstein canonical singularities. By \cite[Thm. 1.3]{Dru16}, $f|_G$ is of degree $1$. Hence $f$ is of degree $1$ and is an isomorphism.
\end{proof}

\begin{lemma}
\label{lem-finite-over-112}
Let $X$ be a Fano variety with locally factorial canonical singularities such that $X$ is smooth in codimension $2$, $\rho(X) = \mathrm{dim}\, X-1$ and  $\mathrm{Nef}(X) = \mathrm{Psef}(X)$. Set $n=\mathrm{dim}\, X$ and assume that $n\geqslant 3$. Assume that Theorem \ref{thm-classification} is true in dimension smaller than $n$. Assume that there is a fibration $X\to (\p^1)^{n-2}$ and a fibration $X\to \p^2$ such that the product morphism $f \colon X\to (\p^1)^{n-2} \times \p^2$ is finite surjective.  Then $X$ is isomorphic to a product as described in Theorem \ref{thm-classification}.
\end{lemma}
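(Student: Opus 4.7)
We proceed by induction on $n = \mathrm{dim}\, X$; the case $n = 3$ is Theorem \ref{thm-classification-3f}, so we may suppose $n \geqslant 4$ and that Theorem \ref{thm-classification} holds in every smaller dimension. First, I would split off one $\p^1$-factor. Let $\sigma \colon (\p^1)^{n-2}\times \p^2 \to (\p^1)^{n-3}\times \p^2$ be the projection forgetting the last $\p^1$-factor, and let $X \overset{g}{\to} Y \to (\p^1)^{n-3}\times \p^2$ be the Stein factorisation of $\sigma\circ f$. Let $\pi \colon X \to \p^1$ be the projection onto the forgotten factor. Then $g\times \pi\colon X \to Y\times \p^1$ is finite and surjective, since its composition with the natural finite map $Y\times \p^1 \to (\p^1)^{n-2}\times \p^2$ is $f$. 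By Lemma \ref{lem-base-*-p1} (applied with $W = Y$, $r = 1$), $Y$ is itself a Fano variety of dimension $n-1$ with locally factorial canonical singularities, smooth in codimension $2$, with $\rho(Y) = \mathrm{dim}\, Y - 1$ and $\mathrm{Nef}(Y)=\mathrm{Psef}(Y)$; by the inductive hypothesis, $Y = Y_1 \times Y_2$ as in Theorem \ref{thm-classification}.

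The next step is to pin down $Y_1$ and $Y_2$. Because $X \to (\p^1)^{n-2}$ is a fibration, its composition with the projection onto the first $n-3$ factors also has connected fibres, and this composition factors through $X \overset{g}{\to} Y \to (\p^1)^{n-3}$; since $g$ has connected fibres, this forces $Y \to (\p^1)^{n-3}$ to have connected fibres. A double cover from Theorem \ref{thm-double-cover-p1} of dimension $\geqslant 3$ has two-point general fibres over the corresponding $(\p^1)^{?}$, so $Y_1$ cannot contain such a factor, i.e. $Y_1 = (\p^1)^a$ for some $a \geqslant 0$. Next, the finite morphism $\varphi \colon Y \to T := (\p^1)^{n-3}\times \p^2$ has $\rho(Y) = \rho(T) = n-2$, so $\varphi^{\ast} \colon N^1(T)_{\mathbb{R}} \to N^1(Y)_{\mathbb{R}}$ is an isomorphism of simplicial nef cones, bijectively identifying their extremal rays. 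If $Y_2$ came from Example \ref{exmp-122} or from cases $4$--$7$ of Theorem \ref{thm-classification-3f}, then $Y$ would possess two independent extremal nef classes $H_1, H_2$ pulled back from the two $\p^2$-factors of $\p^2\times \p^2$; but $T$ has only one extremal nef class $M$ of ``$\p^2$-type''. Because $\varphi^{\ast}$ is a ring homomorphism on numerical classes, $L_i^2 = 0$ in $T$ (for the $\p^1$-classes) forces $(\varphi^{\ast}L_i)^2 = 0$ in $Y$, whereas $H_j^2 \not\equiv 0$ on $Y$, and this incompatibility yields the required contradiction. Hence $Y_2$ lies in cases $1$--$3$ of Theorem \ref{thm-classification-3f} (when $\mathrm{dim}\, Y_2 = 3$) or in Example \ref{exmp-112}; in particular $Y$ is itself a finite cover of $(\p^1)^{n-3}\times \p^2$ of degree $1$, $2$ or $4$.

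It remains to classify $X$ via the finite surjective morphism $g\times \pi \colon X \to Y\times \p^1$. If $Y = (\p^1)^{n-3}\times \p^2$, then $g\times \pi$ is just $f$, and a direct Picard/intersection computation modelled on Proposition \ref{prop12-finite-over-12} (using that $-K_X$ is Cartier and that general fibres of $X \to \p^2$ are smooth rational curves) forces $\mathrm{deg}\, f \in \{1,2,4\}$ with the branch divisor in Example \ref{exmp-112} form in the non-trivial cases. If on the other hand $Y$ is already a non-trivial cover (from Example \ref{exmp-112}), I would restrict to a general fibre of $X \to (\p^1)^{n-3}$: this yields a finite morphism of Fano threefolds $G \to F$ where $F$ is the corresponding fibre of $Y \to (\p^1)^{n-3}$. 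Such $F$ is either in Theorem \ref{thm-classification-3f} (cases $2$--$3$), when $Y_2$ comes from Example \ref{exmp-112}.1, or in Theorem \ref{thm-double-cover-p1}, when $Y_2$ comes from Example \ref{exmp-112}.2. Applying Lemma \ref{lem-finite-over4-112} in the latter case, and an analogous argument (via \cite[Thm. 1.3]{Dru14}) in the former, forces $G \to F$ to be an isomorphism, so $g\times \pi$ is birational and hence an isomorphism. Then $X \cong Y \times \p^1 = (\p^1)^{a+1}\times Y_2$ exhibits $X = X_1\times X_2$ with $X_1 = (\p^1)^{a+1}$ (a product from Theorem \ref{thm-double-cover-p1}) and $X_2 = Y_2$, as required.

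The principal obstacle is the case analysis of the second paragraph---the explicit intersection-theoretic argument eliminating the ``two $\p^2$-factor'' possibilities for $Y_2$, which requires careful bookkeeping in the Chow rings of the varieties from Example \ref{exmp-122} and from cases $4$--$7$ of Theorem \ref{thm-classification-3f}---together with the verification that finite covers of varieties from Example \ref{exmp-112}.1 (not only Example \ref{exmp-112}.2) are trivial, an analogue of Lemma \ref{lem-finite-over4-112} that must be argued separately by the same degeneration-to-threefold approach.
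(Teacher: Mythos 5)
Your first two paragraphs follow the paper's route: take the Stein factorisation $X\to Y\to (\p^1)^{n-3}\times\p^2$ of the morphism forgetting one $\p^1$-factor, apply Lemma \ref{lem-base-*-p1} and the inductive hypothesis to $Y$, and observe that $Y$ must be $(\p^1)^a\times Y_2$ with $Y_2$ from Example \ref{exmp-112} or cases $1$--$3$ of Theorem \ref{thm-classification-3f}, so that $Y\to(\p^1)^{n-3}\times\p^2$ has degree $1$, $2$ or $4$. Your intersection-theoretic exclusion of the hypersurface-type factors is more explicit than the paper's (which leaves this implicit), and it is sound.

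The final paragraph, however, contains a genuine error. You assert that whenever $Y$ is a non-trivial cover, the finite morphism $X\to Y\times\p^1$ is an isomorphism, so that $X\cong (\p^1)^{a+1}\times Y_2$. This is true only when $Y\to(\p^1)^{n-3}\times\p^2$ already has degree $4$ (that is the content of Lemma \ref{lem-finite-over4-112}, whose proof uses \cite[Thm. 1.3]{Dru14} applied to a threefold fibre mapping onto a degree-$(2,2,2)$ double cover of $(\p^1)^3$). When $Y\to(\p^1)^{n-3}\times\p^2$ has degree $2$, the morphism $X\to\p^1\times Y$ can be a genuine double cover: this is precisely how the varieties of Example \ref{exmp-112}.2 are built (a double cover of $Y_1\times(\p^1)^r$ with $Y_1$ itself a double cover), and for such an $X$ the Stein factorisation $Y$ obtained by dropping one of the last $r$ factors is a degree-$2$ cover while $X\to\p^1\times Y$ has degree $2$. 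Your argument would therefore ``prove'' that these varieties do not occur, and neither of the tools you invoke rescues it: \cite[Thm. 1.3]{Dru14} concerns covers of the varieties of Theorem \ref{thm-double-cover-p1}, not covers of $\p^1\times(\text{double cover of }\p^2)$, and Lemma \ref{lem-finite-over4-112} requires the degree-$4$ base. What is actually needed here (and what the paper does) is: restrict to a general threefold fibre $G$ of $X\to(\p^1)^{n-3}$ and use Lemma \ref{lem12-2finite-over-12} to bound $\deg(G\to\p^1\times\p^2)$ by $4$ and to get integrality of the coefficient $b$; get integrality of the $a_i$ from \cite[Lem. 3-2-5]{KMM87}; and then, in the degree-$4$ subcase, analyse the branch divisor of the residual double cover $X\to\p^1\times Y$ to show it is pulled back from a divisor of degree $(d_1,\dots,d_s,0,\dots,0,2)$, which identifies $X$ as $(\p^1)^t\times(\text{Example \ref{exmp-112}.2})$. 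Two smaller slips: the fibre $F$ of $Y\to(\p^1)^{n-3}$ is a surface, not a threefold (you presumably mean the fibre of $Y\times\p^1$); and in the case $Y=(\p^1)^{n-3}\times\p^2$ Proposition \ref{prop12-finite-over-12} applied to a general threefold fibre forces $\deg f\leqslant 2$, so $\deg f=4$ cannot occur there.
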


\begin{proof}
Let $Z=(\p^1)^{n-3}$ be the product of the last $n-3$ factors in the product $(\p^1)^{n-2}$. We note that if $X\to Y$ is the Stein factorization of the projection $$X\to Z\times \p^2 = (\p^1)^{n-3}\times \p^2,$$ then $Y$ is a Fano variety with locally factorial canonical singularities such that $Y$ is smooth in codimension $2$, $\rho(Y) = \mathrm{dim}\, Y-1$ and  $\mathrm{Nef}(Y) = \mathrm{Psef}(Y)$ by Lemma \ref{lem-base-*-p1}. Moreover, since $X\to (\p^1)^{n-2}$ has connected fibers, the induced morphism $Y\to Z$ has connected fibers. Therefore,  by Theorem \ref{thm-classification} in dimension $n-1$, we have $Y\cong (\p^1)^{s-1} \times Y_1$, where $s\geqslant 1$ and $Y_1$ is  a variety constructed  by the methods of  Construction   \ref{exmp-112}.  Thus  $Y\to Z \times \p^2$ is of degree $1$, $2$ or $4$. Moreover, by Lemma \ref{lem-base-*-p1}, $Y_1$ is a Fano variety with locally factorial canonical singularities such that $Y_1$ is smooth in codimension $2$, $\rho(Y_1) = \mathrm{dim}\, Y_1-1$ and  $\mathrm{Nef}(Y_1) = \mathrm{Psef}(Y_1)$. Let $f_1 \colon X\to \p^1$ be the projection to the  first factor of $(\p^1)^{n-2}$.

Let $A_1$ is be a fiber of $f_1$. Let $A_2,...,A_{n-2}$ be   fibers of $X$ over the other different $\p^1$. Let $H$ be the pullback of a line in $\p^2$ by the natural projection $X\to \p^2$. Since $\rho(X)=n-1$, there are positive rational numbers $a_1,...,a_{n-2},b$ such that $-K_X\equiv a_1A_1+\cdots +a_{n-2}A_{n-2}+bH$.

\centerline{
\xymatrix{
X\ar[rd]_{f_1}  \ar[r] &  V=\p^1\times Y \ar[r]\ar[d] &Y \cong (\p^1)^{s-1}\times Y_1 \ar[r]  & Z\times \p^2 \\
&\p^1 & 
}
}

We will discuss   three cases.

\textit{Case 1.}  Assume that the morphism $Y \to Z\times \p^2$ is of degree $4$.  Then $Y_1$ is a variety which can be constructed by the method of    Construction   \ref{exmp-112}.2 and $X$ is finite over $(\p^1)^{s}\times Y_1$.   The natural morphism  $X\to (\p^1)^{s}$  is a fibration since $X\to (\p^1)^{n-2}$ is.  Let $F$ be a general fiber of $X\to (\p^1)^{s}$. Then $F$ is a Fano variety with Gorenstein canonical singularities. Since $X\to (\p^1)^{n-2}$ has connected fibers,  the natural morphism $F\to (\p^1)^{n-2-s}$ induced by $X\to Y_1\to (\p^1)^{n-2-s}$ is a fibration. By Lemma \ref{lem-finite-over4-112}, the natural finite morphism $F \to Y_1$ is an isomorphism. Hence $X\cong (\p^1)^{s}\times Y_1$ in this case.
  
\textit{Case 2.} Assume that the morphism $Y\to Z\times \p^2 $ is of degree $2$. Let $G$ be a general fiber of the natural projection $X\to Z$. Then $G$ is a Fano threefold with Gorenstein canonical singularities. Moreover, the restriction $f_1|_G \colon G\to \p^1$ is a fibration,  general fibers of the  natural morphism $G\to \p^2$ have two connected components, and the product of these two morphisms $G\to \p^1\times \p^2$ is finite.  We note that $K_G\equiv a_1A_1|_G+bH|_G$. By Lemma \ref{lem12-2finite-over-12}, $G\to \p^1\times \p^2$ is of  degree $2$ or $4$ and $b\in \mathbb{Z}$. Since the degree of $f$ is the same as the one of $G\to \p^1\times \p^2$, it is either $2$ or $4$. 

The Cartier divisor $-K_X-bH$ is numerically trivial on the fibers of the Fano fibration $X\to (\p^1)^{n-2}$.  Thus it is linearly equivalent to the pullback of some Cartier divisor in $(\p^1)^{n-2}$ by \cite[Lem. 3-2-5]{KMM87}. This implies that $a_1,...,a_{n-2}\in \mathbb{Z}$. 

We note that   $Y_1$ is a double cover of $(\p^1)^{r}\times \p^2$, branched along some prime divisor of degree $(2,...,2,k)$ such that  $k\in \{2,4\}$  and $r+s=n-2$. If $\mathrm{deg}\, f=2$, then $X\cong  (\p^1)^{s} \times Y_1$. 

If $\mathrm{deg}\, f=4$, then $X$ is a double cover of $V= \p^1\times Y=(\p^1)^{s} \times Y_1$.  There is a natural double cover $h \colon V\to (\p^1)^{s} \times (\p^1)^{r}\times \p^2$, induced by $Y_1\to (\p^1)^r\times \p^2$, branched along a prime divisor of degree $(0,...,0 ,2,...,2,k)$.  By the ramification formula, $K_V$ is linearly equivalent to the pullback by $h$ of a divisor of degree $(-2,...,-2,-1,...,-1,-3+k/2)$ in $ (\p^1)^{s} \times (\p^1)^{r}\times \p^2$.  Since  $\rho(V)=n-1$, we may assume that the double cover $X\to V$ is branched along a divisor $D\subseteq V$ which is numerically equivalent to the pullback by $h$ of a  divisor of degree $(d_1,...,d_s,d'_1,...,d'_r,e)$ in  $(\p^1)^{s} \times (\p^1)^{r}\times \p^2$. By the ramification formula again, $K_X$ is linearly equivalent to the pullback of a divisor of degree $(-2+d_1/2,...,-2+d_s/2,-1+d'_1/2,...,-1+d'_r/2,-3+k/2+e/2)$ in $(\p^1)^{s} \times (\p^1)^{r}\times \p^2$.  Since $a_1,...,a_{n-2},b\in \mathbb{Z}$ and $-K_X$ is ample,  we obtain that $d_1,...,d_s, e\in \{0,2\}$ and that $d_1'=...=d_r'=0$. Since $X\to (\p^1)^{n-2}$ has connected fibers, we obtain that $e>0$. Hence $e=2$. Since $b>0$, we obtain that $k=2$. Moreover, since  $X\to \p^2$ has connected fibers, $d_1,...,d_s$  are not all zero. We note that $V$ is simply connected and locally factorial. Thus $D$ is in fact linearly equivalent to the pullback of a  divisor of degree $(d_1,...,d_s,0,...,0,e)$ in  $(\p^1)^{s}\times \p^2 \times (\p^1)^{r}$. We note that $D$ is a prime divisor by \cite[Lem. 3.7]{Dru16}. Thus we obtain that $X\cong X_1\times X_2$, where $X_1=(\p^1)^{t}$ with $s>t\geqslant 0$ and $X_2$ is a variety which can be constructed by the method of  Construction   \ref{exmp-112}.2.

\textit{Case 3.} Assume   $Y\to Z\times \p^2 $ is an isomorphism. Let $G$ be a general fiber of the natural projection $X\to Z$. Then $G$ is a Fano threefold with Gorenstein canonical singularities. Moreover,  $f_1|_G \colon G\to \p^1$ and the natural morphism $G\to \p^2$ are fibrations, and the product $h \colon  G\to \p^1\times \p^2$ is finite. By Proposition \ref{prop12-finite-over-12}, $h$ is of degree at most $2$. Hence $f$ is of degree at most $2$.

If the degree of $f$ is $1$, then $f$ is an isomorphism. If $\mathrm{deg}\, f=2$, then by Lemma \ref{lem-double-cover} and Lemma \ref{lem-double-cover-Fano}, $f \colon X\to (\p^1)^{n-2} \times \p^2$  is a double cover, branched along a   divisor of degree $(d_1,...,d_{n-2},k)$ such that $d_1,...,d_{n-2}\in \{0,2 \}$ and that $k\in \{0,2,4\}$. Since $X$ is locally factorial, the branched locus of $f$ is a prime divisor by \cite[Lem. 3.7]{Dru16}. Since $X\to (\p^1)^{n-2}$ and $X\to \p^2$ have connected fibers, we obtain that $k\neq 0$ and $d_1,...,d_{n-2}$ are not all zero. Hence $X \cong X_1\times X_2$, where  $X_1 \cong (\p^1)^{t}$ for some $t\geqslant 0$ and $X_2$ is a double cover of $(\p^1)^{n-2-t}\times \p^2$, branched along a prime divisor  of degree $(2,...,2,2)$ or $(2,...,2,4)$. 
\end{proof}

\subsection{Finite morphisms over hypersurfaces of $(\p^1)^{n-3}\times \p^2\times \p^2$} In this subsection, we will study finite morphisms from a $n$-dimensional Fano variety  onto $(\p^1)^{n-3}\times W$, where $W$ is an ample hypersurface in $\p^2 \times \p^2$.

\begin{lemma}
\label{lem-hypersurface-4-122-non-fano}
Let $Y$ be a variety constructed by the method of    Construction   \ref{exmp-112}.2   of dimension $n$. Then there is a finite surjective morphism, of degree $4$, $Y\to (\p^1)^{n-2}\times B_1$ with $B_1\cong \p^2$.  Let $X$ be a  hypersurface in $Y\times B_2$ with $B_2\cong \p^2$. Assume that the image of $X$ in $B_1\times B_2$ by the natural projection is a normal hypersurface $W$ of degree $(p,q)$ with isolated $\mathbb{Q}$-factorial klt singularities, where  $p,q\in \{1,2\}$.    Then $X$ is normal and  $-K_X$ is not big.  
\end{lemma}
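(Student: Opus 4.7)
The plan is to identify $X$ as an irreducible component of the scheme-theoretic preimage
$$\widetilde{W} \;:=\; (\pi\times\mathrm{id}_{B_2})^{-1}\bigl((\p^1)^{n-2}\times W\bigr)\;\subset\; Y\times B_2,$$
where $\pi:Y\to(\p^1)^{n-2}\times B_1$ is the finite degree-$4$ morphism built from the two double covers $Y\to Y_1\times(\p^1)^r\to(\p^1)^{n-2}\times B_1$ in the construction of Example \ref{exmp-112}.2. The class of $\widetilde{W}$ in $Y\times B_2$ is $pH_1+qH_2$, with $H_i$ the pullback of the hyperplane class from $B_i$.

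For the normality of $X$ I would apply Lemma \ref{lem-fiber-product-normal} iteratively along the above two-step factorisation. Since $W$ is a Fano threefold with $\mathbb{Q}$-factorial klt singularities, $W\to B_1$ is equidimensional (Lemma \ref{lem-normal-hypersurface-equidim}), and because $W$ is smooth in codimension $2$, Lemma \ref{lem-smooth-Fano-surface-curve} yields reduced fibres outside a codimension-$2$ subset of $B_1$. Starting from $(\p^1)^{n-2}\times W$ and pulling back successively along the two cyclic double covers then shows $\widetilde{W}$ is normal and Cohen-Macaulay. Because the irreducible components of a normal scheme are pairwise disjoint and each normal, the component $X$ (by hypothesis an irreducible hypersurface with image $W$) is itself normal.

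I next pin down $[X]\in\mathrm{Pic}(Y\times B_2)\otimes\mathbb{Q}=\mathbb{Q}\langle A_1,\ldots,A_{n-2},H_1,H_2\rangle$, using $\rho(Y)=n-1$ with basis the pullbacks $A_i,H_1$. Since $\mathrm{Nef}(Y)=\mathrm{Psef}(Y)$ is simplicially generated by these $n-1$ nef classes, every effective divisor class on $Y\times B_2$ has non-negative coordinates in the basis $A_i,H_1,H_2$. Writing $\widetilde{W}=\sum_i e_iW_i$, the identity $\sum_i e_i[W_i]=pH_1+qH_2$ forces each $A_j$-coefficient of $[W_i]$ to vanish, so $[X]=c_1H_1+c_2H_2$ with $c_1,c_2\geq 1$ (positivity follows because $X$ surjects onto both $B_1$ and $B_2$). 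Testing against the curve $\gamma:=(\pi\times\mathrm{id}_{B_2})^{-1}(s_0\times L\times\{b_2\})$ for generic $s_0$, a line $L\subset B_1$, and generic $b_2\in B_2$ gives $\gamma\cdot X=4c_1$ (from $\gamma\cdot H_1=\deg\pi=4$); counting the preimages through $X\to(\p^1)^{n-2}\times W$ of the $p$ intersection points $L\cap W|_{b_2}$ gives $\gamma\cdot X=dp$, where $d$ is the degree of $X\to(\p^1)^{n-2}\times W$. Hence $c_1/p=c_2/q=d/4$, and the integrality of $c_1,c_2\geq 1$ together with $p,q\in\{1,2\}$ forces either $(c_1,c_2)=(p,q)$ with $d=4$, or $(c_1,c_2)=(1,1)$ with $(p,q)=(2,2)$ and $d=2$.

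Two applications of the ramification formula for double covers give $-K_Y=\pi^*\bigl(\sum_iA_i+H_1\bigr)$, so by adjunction
$$-K_X \;=\; \bigl(\sum_iA_i+(1-c_1)H_1+(3-c_2)H_2\bigr)\Big|_X.$$
If $c_1=1$, then $\sum_iA_i+(3-c_2)H_2$ is a non-negative combination of the nef classes $A_i,H_2$, so $-K_X$ is nef; an intersection calculation on $Y\times B_2$ using $A_1\cdots A_{n-2}H_1^2H_2^2=4$ (from $\deg\pi=4$) yields $(-K_X)^{n+1}=0$, and since a nef divisor is big iff its top self-intersection is positive, $-K_X$ is not big. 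If $c_1=2$, I consider the curve $\ell:=X\cap(F_s\times\{b_2\})\subset X$ for a generic fibre $F_s$ of $Y\to(\p^1)^{n-2}$ and generic $b_2\in B_2$; since the family $\{\ell\}$ sweeps out $X$ as $(s,b_2)$ varies, its class is movable. The same calculation gives $-K_X\cdot\ell=4c_1(1-c_1)=-8<0$, so $-K_X$ pairs negatively with a movable curve and cannot be pseudo-effective, a fortiori not big. The main obstacle I anticipate is the class identification of $X$: ruling out the $A_j$-contributions to $[X]$ demands a precise description of the pseudo-effective cone of $Y\times B_2$, and extracting the degree identity $c_1/p=c_2/q=d/4$ relies on a careful choice of test curves in the degree-$4$ cover.
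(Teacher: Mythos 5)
Your proposal is correct and follows essentially the same route as the paper: normality via two applications of Lemma \ref{lem-fiber-product-normal} along the degree-$4$ tower (using Lemmas \ref{lem-normal-hypersurface-equidim} and \ref{lem-smooth-Fano-surface-curve}), then adjunction on $Y\times B_2$ to see that the $H_1$-coefficient of $-K_X$ is non-positive. The only difference is that the paper identifies $X$ outright with the full (normal, hence irreducible) preimage of $(\p^1)^{n-2}\times W$, so $(c_1,c_2)=(p,q)$ immediately and your cone/test-curve analysis of the class of a possible proper component, as well as the extra case $d=2$, collapses; your two ways of concluding non-bigness (nef with vanishing top self-intersection, or negative against a movable curve) correctly fill in what the paper leaves as "since $p\geqslant 1$, $-K_X$ is not big."
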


\begin{proof}
There is a natural finite surjective morphism  $X\to D$, where  $$D=(\p^1)^{n-2}\times W \subseteq (\p^1)^{n-2}\times B_1\times B_2.$$   By the construction of $Y$, the natural morphism $Y\times B_2\to (\p^1)^{n-2}\times B_1 \times B_2$ is the composition of the following two double covers $$Y \times B_2  \overset{g}{\longrightarrow} V \times B_2 \overset{h}{\longrightarrow} (\p^1)^{n-2}\times B_1\times B_2,$$ where $h$ is a double cover, branched along a prime divisor of degree $(0,...,0,2,...,2,2,0)$.

We note that the natural morphism $D\to (\p^1)^{n-2}\times B_1$ is an equidimensional  Fano fibration since $W\to B_1$ is equidimensional by Lemma \ref{lem-normal-hypersurface-equidim}. Since $W$ has isolated singularities, $D$ is smooth in codimension $2$. There is an open subset $U$ in $(\p^1)^{n-2}\times B_1$ whose complement has codimension at least $2$ such that  $D\to (\p^1)^{n-2}\times B_1$ has reduced fibers over $U$ by Lemma \ref{lem-smooth-Fano-surface-curve}. We note that $V\to (\p^1)^{n-2}\times \B_1$ is a cyclic cover  branched along some prime divisor and that $$h^*D\cong D\times_{(\p^1)^{n-2}\times B_1} V.$$ Hence, $h^*D$ is normal and Cohen-Macaulay by Lemma \ref{lem-fiber-product-normal}.   Moreover,  $h^*D\to V$ also has reduced fibers over some open subset of $V$ whose complement has codimension at least $2$.  Similarly, by Lemma \ref{lem-fiber-product-normal} again,  $$g^*(h^*D)= (h^*D)\times_ {V} Y$$ is also normal. Hence, $X=g^*(h^*D)$ and it is normal.

\centerline{
\xymatrix{
X  =g^*(h^*D)  \ar[d] \ar[r]  & h^*D \ar[d] \ar[r]  & D \ar[d]\\
 Y \ar[r] &  V  \ar[r]&  (\p^1)^{n-2}\times B_1
}
}

Let $A_1,...,A_{n-2}$ be  fibers of the natural projections from $X$ to the different factors $\p^1$. Let $H_i$   be a divisor  in $X$ which is the pullback  of a line in $B_i$ for $i=1,2$. Then by the adjunction formula, we have $$-K_X\equiv A_1+\cdots+A_{n-2}+(1-p)H_1+(3-q)H_2.$$ Since $p\geqslant 1$, $-K_X$ is not big.
\end{proof}

\begin{lemma}
\label{lem-finite-over-122}
Let $X$ be a Fano variety with locally factorial canonical singularities such that $X$ is smooth in codimension $2$, $\rho(X) = \mathrm{dim}\, X-1$ and  $\mathrm{Nef}(X) = \mathrm{Psef}(X)$. Set $n=\mathrm{dim}\, X$ and assume that $n\geqslant 4$. Assume that Theorem \ref{thm-classification} is true in dimension smaller than $n$. Assume that there are  fibrations $X\to (\p^1)^{n-3}$,  $X\to B_1$ and  $X\to B_2$  such that 
\begin{enumerate}
\item $B_1\cong B_2\cong \p^2$,
\item $X\to B_1\times B_2$ is not surjective, 
\item $X\to (\p^1)^{n-3} \times B_1 \times B_2$ is finite onto its image.
\end{enumerate}
Then $X$ is isomorphic to a product as described in Theorem \ref{thm-classification}.
\end{lemma}

\begin{proof}
Let $W$ be the image of $X\to B_1\times B_2$ and let $X\to W'\to W$ be the Stein factorization. Since the natural morphism $X\to (\p^1)^{n-3} \times W'$ is finite surjective, by Lemma \ref{lem-base-*-p1}, $W'$ is a Fano threefold of Theorem \ref{thm-classification-3f}. Thus by Lemma \ref{lem-factorial-image-p2p2}, $W$ is a normal hypersurface of degree $(p,q)$, with isolated locally factorial canonical singularities,  such that $p,q\in \{1,2\}$.  Let $f \colon X\to (\p^1)^{n-3}\times W$ be the natural morphism.

Let $X\to Y$ be the Stein factorization of $X\to (\p^1)^{n-3}\times B_1$. Then $X\to Y$ is a Mori fibration of relative dimension $1$ (see Lemma \ref{lem-relative-1-Mori}). By   Lemma \ref{lem-base-fano}, $Y$ is a Fano variety with locally factorial canonical singularities such that   $\rho(Y) = \mathrm{dim}\, Y-1$ and  $\mathrm{Nef}(Y) = \mathrm{Psef}(Y)$. The fibration $X\to Y$ is equidimensional since $(\p^1)^{n-3}\times W\to (\p^1)^{n-3}\times  B_1$ is by Lemma \ref{lem-normal-hypersurface-equidim}. Hence by  Lemma \ref{lem-Mori-fibration-relative-1}, $Y$ is smooth in codimension $2$.  Thus, by Theorem \ref{thm-classification} in dimension $n-1$, $Y$ is isomorphic to a product as described in Theorem \ref{thm-classification}. Moreover, since $X\to (\p^1)^{n-3}$   has connected fibers, so has the induced morphism $Y\to (\p^1)^{n-3}$. Thus   $Y\cong (\p^1)^r \times Y_1$ for some $r\geqslant 0$ such that  $ Y_1$ is either $\p^2$  or a variety which can be constructed by the method of Construction \ref{exmp-112}. In particular,  $Y \to (\p^1)^{n-3}\times B_1$ is of degree $1$, $2$ or $4$.

If it is of degree $4$, then $Y=(\p^1)^r\times Y_1$  where $r\geqslant 0$ and $Y_1$ is a variety which can be constructed by the method of  Construction   \ref{exmp-112}.2.  Let $S$ be a general fiber of $X\to (\p^1)^r$. Then $S$ is a Fano variety. If $T$ is the image of $S$ in $Y_1\times \B_2$, then  by Lemma \ref{lem-hypersurface-4-122-non-fano}, $T$ is normal and $-K_T$ is not big. Thus $S$ cannot be a Fano variety by Lemma \ref{lem--K-big}. This is a contradiction. Hence the degree of $Y \to (\p^1)^{n-3}\times B_1$ is $1$ or $2$. This shows that  fibers of $X\to (\p^1)^{n-3}\times B_1$ have at most two connected components.  Similarly,  fibers of $X\to (\p^1)^{n-3}\times B_2$ have at most two connected components.

Let $A_1,...,A_{n-3}$ be   fibers of the natural fibrations from $X$ to  different $\p^1$. Let $H_i$ be the pullback  of a line in $B_i$ by the  fibration  from $X$ to $B_i$ for $i=1,2$. Since the  Picard number of $X$ is $n-1$, there are positive rational numbers $a_1,...,a_{n-3},b_1,b_2$ such that $-K_X\equiv a_1A_1+\cdots a_{n-3}A_{n-3}+b_1H_1+b_2H_2$. Let $F$ be the fiber of $X\to (\p^1)^{n-3}$ over a general point $\alpha\in (\p^1)^{n-3}$, then $-K_F \equiv (b_1H_1+b_2H_2)|_F$. Moreover, $F$ is a Fano threefold with isolated Gorenstein canonical singularities and the natural morphism $F\to W$ is finite.

We will discuss   four cases.

\textit{Case 1.} Assume that   the  projection   $X\to (\p^1)^{n-3} \times B_i$ is a  fibration for $i=1,2$.  Then the  natural projections from $F$ to  $B_1$ and $B_2$ are fibrations.  Since $F$ is finite over $W$ and the natural morphisms $W\to B_1$ and $W\to B_2$ are equidimensional (see Lemma \ref{lem-normal-hypersurface-equidim}), the fibrations $F\to B_1$ and $F \to B_2$ are equidimensional. Hence by Proposition \ref{prop12-finite-over-22}, the finite morphism $F\to W$ is of degree $1$ or $2$. 

If this degree is $1$, then $f \colon X\to (\p^1)^{n-3}\times W$ is an isomorphism. If the degree of $F\to W$ is $2$, then $f$ is a double cover. Moreover, $W$ is smooth of degree $(1,1)$ in $B_1\times B_2$ by Proposition \ref{prop12-finite-over-22}. We note that there is a natural isomorphism $\mathrm{Pic}((\p^1)^{n-3}\times B_1\times B_2) \cong \mathrm{Pic}((\p^1)^{n-3}\times W)$ by the Lefschetz theorem (see \cite[Example 3.1.25]{Laz04}). By Lemma \ref{lem-double-cover} and  Lemma \ref{lem-double-cover-Fano}, $f$ is branched along a prime divisor $D$ which is the intersection of $(\p^1)^{n-3}\times W$ and a divisor of degree $(d_1,...,d_{n-3},k,l)$ in $(\p^1)^{n-3}\times B_1\times B_2$ such that $d_1,...,d_{n-3},k,l \in \{0,2\}$. Since the two morphisms $F\to B_1$ and $f\to B_2$ are fibrations, $k,l\neq 0$. Hence $X\cong X_1\times X_2$, where $X_1=(\p^1)^{r}$ with some $r\geqslant 0$ and $X_2$ is a variety which can be constructed by the method of   Construction   \ref{exmp-122}.1.

\textit{Case 2.} Assume that  $X\to (\p^1)^{n-3}\times B_2$ is a fibration and general fibers of  $X\to (\p^1)^{n-3}\times B_1$ have two connected components. We recall that $X\to Y$ is the Stein factorization of $X\to (\p^1)^{n-3}\times B_1$. Then $Y=(\p^1)^r \times Y_1$, where $r\geqslant 0$ and $Y_1$ is a variety which can be constructed by the method of   Construction   \ref{exmp-112}.1. If $F\to Z_1\to B_1$ is the  Stein factorization, then $Z_1$ is isomorphic to the fiber of $Y\to (\p^1)^{n-3}$ over $\alpha$. Since $Y$ is smooth in codimension $2$, $Z_1$ is  smooth. Moreover, as in Case 1, the morphisms from $F$ to $B_1$ and $B_2$ are equidimensional.   Thus $F$ satisfies the conditions in Lemma \ref{lem-12-finite-over-22}. This implies that $p=1$ and $F\to W$ is of degree $2$. Thus $X\to (\p^1)^{n-3}\times W$ is of degree $2$.

Let $V$ be the pullback of $(\p^1)^{n-3}\times W$ by the natural morphism $Y\times B_2 \to (\p^1)^{n-3}\times B_1\times B_2$. We note that  the fibration  $(\p^1)^{n-3}\times W\to (\p^1)^{n-3}\times B_1$ has reduced fibers over some open subset of $(\p^1)^{n-3}\times B_1$ whose complement has codimension at least $2$, for $W$ is smooth in codimension $2$ (see Lemma \ref{lem-smooth-Fano-surface-curve}).  Thus  $V$, which is isomorphic to the fiber product $((\p^1)^{n-3}\times W) \times_{(\p^1)^{n-3}\times B_1} Y$, is  normal  by Lemma \ref{lem-fiber-product-normal} and irreducible.  Since $X\to (\p^1)^{n-3}\times W$ is of degree $2$, we obtain that $X\cong V$.  Since $X$ is a Fano variety, by ramification formula and adjunction formula, we obtain that the double cover $Y_1\to (\p^1)^{n-3-r}\times B_1$ is branched along some divisor of degree $(2,...,2,2)$. Hence   $X\cong X_1\times X_2$, where $X_1=(\p^1)^{r}$  and $X_2$ is a variety which can be constructed by  the method of  Construction   \ref{exmp-122}.2.

\textit{Case. 3.}  Assume that $X\to (\p^1)^{n-3}\times B_1$ is a fibration and general fibers of  $X\to (\p^1)^{n-3}\times B_2$ have two connected components. By symmetry, we can reduce to  the second case. 

\textit{Case 4.} Assume that general fibers of the two projections   $X\to (\p^1)^{n-3}\times B_1$ and $X\to (\p^1)^{n-3}\times B_2$ have two connected  components.  As in the second case, if  $F\to Z_i\to B_i$ is the  Stein factorization for $i=1,2$, then $Z_1$ and $Z_2$ are smooth. Hence $F$ satisfies the conditions in Lemma \ref{lem-22-finite-over-22}. The morphism $F\to W$ is of degree $4$ and $p=q=b_1=b_2=1$. Hence the morphism $X\to (\p^1)^{n-3}\times W$ is of degree $4$. Since the Cartier divisor $-K_X-b_1H_1-b_2H_2$ is numerically trivial on the fibers of the Fano fibration $X\to (\p^1)^{n-3}$, we obtain that $a_1,...,a_{n-3}\in \mathbb{Z}$ by \cite[Lem. 3-2-5]{KMM87}.

We recall that  $X\to Y$ is the Stein factorization of $X\to (\p^1)^{n-3}\times B_1$. Then we have $Y=(\p^1)^r \times Y_1$, where   $r\geqslant 0$ and $Y_1$ is a variety which can be constructed by the method of  Construction   \ref{exmp-112}.1. Let $V$ be the pullback of $(\p^1)^{n-3}\times W$ by the natural morphism  $Y\times B_2 \to  (\p^1)^{n-3}\times B_1\times B_2$.    Then, as in the second case,  $V$ is  normal and irreducible. Hence $X\to V$ is a double cover.

\centerline{
\xymatrix{
X     \ar[r]  & V \ar[d] \ar[r]  & (\p^1)^{n-3}\times W  \ar[d] \ar[r]& (\p^1)^{n-3}\times B_2\\
  &  Y  \ar[r]&  (\p^1)^{n-3}\times B_1 &
}
}

Similarly, there is an open subset $U$ of $(\p^1)^{n-3}\times B_2$ whose complement has codimension at least $2$ such that the   $(\p^1)^{n-3}\times W \to (\p^1)^{n-3} \times B_2$ has reduced fibers over $U$. The branch locus of $V\to (\p^1)^{n-3}\times W$ is the intersection of  $(\p^1)^{n-3}\times W$ and a divisor of degree $(0,...,0,2,...,2,2,0)$ in $(\p^1)^{n-3} \times B_1 \times B_2$. Thus this branch locus is  horizontal over $(\p^1)^{n-3} \times B_2$. Hence, the natural morphism $V\to (\p^1)^{n-3} \times B_2$  is a fibration and it has reduced fibers over some open subset $U'\subseteq U$ such that $\mathrm{codim}\,  U\backslash U'\geqslant 2$.  Thus  the pullback of any prime divisor in $(\p^1)^{r} \times B_2$ by the fibration $V\to (\p^1)^{r} \times B_2$ is reduced.

Assume that $X\to V$ is branched along some   divisor $D$. Then $D$  is a prime divisor by \cite[Lem. 3.7]{Dru16} for $X$ is locally factorial. Since $a_1,...,a_{n-3}\in \mathbb{Z}_{>0}$ and $b_1=b_2=1$, by the adjunction formula and the ramification formula, we obtain that $Y_1\to (\p^1)^{n-3-r}\times \B_1$ is branched along some divisor of degree $(2,..,2,2)$ and $D$ is numerically equivalent to the pullback of a divisor $E$ of degree $(d_1,...,d_r,2)$ in $(\p^1)^{r} \times B_2$ by the natural projection $V\to   (\p^1)^{r} \times B_2$ such that $d_1,...,d_r\in \{0,2\}$.  Since the pullback of any prime divisor in $(\p^1)^{r} \times B_2$ by the fibration $V\to (\p^1)^{r} \times B_2$ is reduced,  we have $D=f^*(f(D))$ and $D$ is in fact linearly equivalent to the pullback of $E$.

Since the projection  from $X $ to $B_2$ is a fibration, we obtain that $r\geqslant 1$ and that $d_1,...,d_r$ are not all equal to $0$. Hence we obtain that $X\cong X_1\times X_2$, where $X_1=(\p^1)^{t}$ for some $r>t\geqslant 0$ and $X_2$ is a variety which can be constructed by the method of   Construction   \ref{exmp-122}.3.
\end{proof}

\section{Proof of Theorem \ref{thm-classification}}

We will first prove the following lemma.

\begin{lemma}
\label{lem-existence-fibration-p1}
Let $X$ be a Fano  variety with locally factorial canonical singularities such that $X$ is smooth in codimension $2$, $\rho(X) = \mathrm{dim}\, X-1\geqslant 3$ and  $\mathrm{Nef}(X) = \mathrm{Psef}(X)$. Then there is a  fibration  from  $X$ to $\p^1$.
\end{lemma}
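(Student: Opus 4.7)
The plan is to proceed by induction on $n=\dim X$, with the base case $n=4$ (i.e.\ $\rho(X)=3$) supplied by Proposition \ref{prop-4f-to-p1}. For the inductive step, suppose $n\geqslant 5$, the lemma is known in dimensions between $4$ and $n-1$, and, arguing by contradiction, that there is no fibration from $X$ to $\p^1$.

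Since $X$ is a Fano variety with $\mathbb{Q}$-factorial klt singularities and $\rho(X)\geqslant 2$, the Cone Theorem together with \cite[Lem.~4.4]{Dru14} (which is already invoked throughout this paper) produces a Mori fibration $f\colon X\to Y$. By Lemma \ref{lem-equidimensional}, the assumption that $X$ admits no fibration to $\p^1$ forces $f$ to be equidimensional of relative dimension $1$; in particular $\dim Y=n-1\geqslant 4$. Applying Lemma \ref{lem-base-*} to $f$, the base $Y$ is again a Fano variety with locally factorial canonical singularities, smooth in codimension $2$, with $\mathrm{Nef}(Y)=\mathrm{Psef}(Y)$ and $\rho(Y)=\dim Y-1=n-2\geqslant 3$.

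Thus $Y$ satisfies the hypotheses of the present lemma in dimension $n-1$, so by the inductive hypothesis there exists a fibration $g\colon Y\to \p^1$. The composition $g\circ f\colon X\to \p^1$ is proper, surjective, and has connected fibres (both $f$ and $g$ do), hence is a fibration from $X$ to $\p^1$, contradicting our standing assumption. This completes the induction.

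I do not foresee a serious obstacle: the two genuine inputs are the four-dimensional statement Proposition \ref{prop-4f-to-p1} (already proved in Section \ref{Fano fourfolds satisfying $(*)$}) and the pair of structural lemmas \ref{lem-base-*} and \ref{lem-equidimensional}, which together guarantee that a Mori contraction of an $X$ with no pencil to $\p^1$ lands on a strictly smaller variety of exactly the same type. The only point that requires a line of care is checking that the induction is set up correctly: the inequality $\rho(Y)=n-2\geqslant 3$ (needed so that the inductive hypothesis applies in the form stated) holds precisely because $n\geqslant 5$, and for $n=5$ the conclusion for $Y$ is the base case Proposition \ref{prop-4f-to-p1}, not a further inductive appeal.
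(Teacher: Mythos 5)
Your proof is correct and rests on exactly the same ingredients as the paper's: Lemma \ref{lem-base-*} to pass to a lower-dimensional base of the same type, and Proposition \ref{prop-4f-to-p1} as the dimension-$4$ input. The only difference is organizational — the paper contracts a face of $\NE(X)$ of dimension $n-4$ in a single step to land directly on a fourfold, whereas you descend one Mori fibration at a time by induction; both are valid.
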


\begin{proof}
We will argue by contradiction. Set $n=\mathrm{dim}\, X$. Assume   that there is no fibration from $X$ to $\p^1$. Let $V$ be a face of $\NE(X)$ of dimension $n-4$. Then there is a morphism $X\to Y$ corresponding to $V$. By Lemma \ref{lem-base-*}, $Y$ is a Fano variety with locally factorial canonical singularities such that $Y$ is smooth in codimension $2$, $\rho(Y) = \mathrm{dim}\, Y-1$ and  $\mathrm{Nef}(Y) = \mathrm{Psef}(Y)$. We note that $\rho(Y)=(n-1)-(n-4)=3$. Hence $\mathrm{dim}\, Y=4$. By Proposition \ref{prop-4f-to-p1},  there is a fibration from $Y$ to $\p^1$ which  induces a fibration from $X$ to $\p^1$. This is a contradiction. 
\end{proof}

Now we can prove Theorem \ref{thm-classification}.

\begin{proof}[{Proof of Theorem \ref{thm-classification}}]

We will proceed by induction on the dimension of $X$. If $\mathrm{dim}\, X=2$, then $X\cong \p^2$. In this case, we let   $X_1$ be a point and we let $X_2$ be $\p^2$. If $\mathrm{dim}\, X=3$, then the theorem follows from Theorem \ref{thm-classification-3f}. Assume that the theorem is true in dimension smaller than $n$, where $n\geqslant 4$ is an integer.   Now we consider the case of $\mathrm{dim}\, X =n$.

By Lemma \ref{lem-existence-fibration-p1}, there is a fibration $f_1 \colon X\to \p^1$.  There is an extremal ray $R$ in $\NE(X)$ such that the class of any curve in $X$ contracted by $f_1$ is not contained in $R$. Let $f_2 \colon X\to Y$ be the fibration corresponding to $R$. Then $f=f_1\times f_2 \colon  X\to \p^1\times Y$ is a finite surjective morphism. By assumption, Theorem \ref{thm-classification} is true in dimension smaller than $n$. Hence $Y$ is isomorphic to a product as described in Theorem \ref{thm-classification} by Lemma \ref{lem-base-*-p1}. We have the following two possibilities.

\textit{Case 1.} The variety $Y$ is a finite cover of $(\p^1)^{n-3}\times \p^2$ and  $f$ induces a finite surjective morphism $g \colon X\to (\p^1)^{n-2}\times \p^2$ such that the projections from $X$ to $\p^1$ and the projection $g_2$ from $X$ to $\p^2$ induced by $g$ are fibrations. Let $X\overset{g_1}{\longrightarrow} Z\overset{\pi}{\longrightarrow} (\p^1)^{n-2}$  be the Stein factorization of the natural projection induced by $g$.      Then $Z$ is one of the varieties in Theorem \ref{thm-double-cover-p1} by Lemma \ref{lem-base-fano}, Lemma \ref{lem-relative-1-Mori} and Lemma \ref{lem-base-Picard-number}. 

If $\pi$ is of degree at least $2$, then we can write $Z\cong Z_1\times Z_2$ such that $Z_1$ is a double cover of $(\p^1)^{r}$ for some $r\geqslant 3$, branched along some prime divisor of degree $(2,...,2)$, and $Z_2$ is one of the varieties of Theorem \ref{thm-double-cover-p1}, of dimension $n-2-r$. Let $h_1 \colon X\to Z_1$ be the natural fibration. Let $h_2 \colon X\to X'$ be the Stein factorization of the natural projection $X\to Z_2\times \p^2$. Then, by Lemma \ref{lem-base-*-p1}, $X'$ is a Fano variety  with locally factorial canonical singularities such that $X'$ is smooth in codimension $2$, $\rho(X') = \mathrm{dim}\, X'-1$ and  $\mathrm{Nef}(X') = \mathrm{Psef}(X')$. Hence it is isomorphic to a product as described in Theorem \ref{thm-classification} by  induction hypotheses. Let $F$ be a general fiber of $h_2$. Then $h_1|_F \colon F \to Z_1$ is a finite surjective morphism. Since $F$ is a Fano variety with canonical Gorenstein singularities, and since $Z_1$ is a double cover  of $(\p^1)^r$ branched along some prime divisor of degree $(2,...,2)$, we obtain that  $h_1$ is an isomorphism by \cite[Thm. 1.3]{Dru16}. Hence $X\cong Z_1\times X'$, and it is isomorphic to a product as described in Theorem \ref{thm-classification}.

If $\pi$ is of degree $1$, then $Z= (\p^1)^{n-2}$. By Lemma \ref{lem-finite-over-112},   $X$ is isomorphic to a product as described in Theorem \ref{thm-classification}.

\textit{Case 2.} The variety $Y$ is a finite cover of $(\p^1)^{n-4}\times W$, where $W$ is a normal $\mathbb{Q}$-factorial divisor of degree $(p,q)$ in $B_1\times B_2\cong \p^2\times \p^2$ such that $p,q\in \{1,2\}$. Moreover,  $f$ induces a finite surjective morphism $g \colon X\to (\p^1)^{n-3}\times W$ such that the projections from $X$ to $\p^1$ and the projections  from $X$ to $B_1$ and $B_2$ induced by $g$ are fibrations. Let $X\overset{g_1}{\longrightarrow} Z\overset{\pi}{\longrightarrow} (\p^1)^{n-3}$  be the Stein factorization of the natural projection induced by $g$.   As in the first case, if $\pi$ is of degree at least $2$, then $X\cong Z_1\times X'$, where $Z_1$ is a double cover of $(\p^1)^{r}$ for some $r\geqslant 3$, branched along some prime divisor of degree $(2,...,2)$, and $$X\to X'\to (\p^1)^{n-3-r}\times W$$ is the Stein factorization. By Lemma \ref{lem-base-*-p1} and induction hypotheses, $X'$ is isomorphic to a product as described in Theorem \ref{thm-classification}. Thus $X$ is also isomorphic to a product as described in Theorem \ref{thm-classification}.

If $\pi$ is of degree $1$, then $Z=(\p^1)^{n-3}$. By Lemma \ref{lem-finite-over-122},   $X$ is isomorphic to a product as described in Theorem \ref{thm-classification}. This completes the proof of the theorem.
\end{proof}

\bibliographystyle{amsalpha}
\bibliography{references}

\end{document}